\documentclass[12pt]{article}

\usepackage{mathtools} 
    \mathtoolsset{%
        mathic = true   
    }%
    \numberwithin{equation}{section}   
\usepackage{amsthm}   
\usepackage{xcolor}   
\usepackage{enumitem}   
    \setenumerate{listparindent=\parindent}   

\usepackage[T1]{fontenc}
\usepackage[main=english,french]{babel}

\usepackage{listings} 
    \lstdefinestyle{sagecode}{
        language=Python,
        basicstyle=\ttfamily\footnotesize,
        numbers=left,
        numberstyle=\tiny,
        numbersep=8pt,
        breaklines=true,
        columns=fullflexible,
        keepspaces=true,
        showstringspaces=false,
        frame=single
    }

\usepackage{array}   
    \newcolumntype{L}{>{$}l<{$}}   
    \newcolumntype{C}{>{$}c<{$}}   
    \newcolumntype{R}{>{$}r<{$}}   

\usepackage{amssymb}   
\usepackage{mathrsfs}   

\usepackage{centernot} 

\usepackage{tikz}

\usepackage{csquotes}

\usepackage[
    backend=biber,
    style=alphabetic, maxalphanames=4, minalphanames=3, 
    autolang=hyphen,
    maxbibnames=99,
    isbn=false,
]{biblatex}
    \AtEveryBibitem{%
        \clearfield{note} 
        \iffieldundef{doi} 
            {}
            {\clearfield{url}\clearfield{urldate}}%
        \iffieldundef{eprint}
            {}
            {\clearfield{url}\clearfield{urldate}}%
    }
    \renewbibmacro*{in:}{} 
    \renewbibmacro*{issue+date}{
        \ifboolexpr{
            test {\iffieldundef{issue}}
            and
            test {\iffieldundef{year}}
        }
            {}
            {\printtext[parens]{%
                \printfield{issue}%
                \setunit*{\addspace}%
                \usebibmacro{date}}}%
        \newunit}
    \renewbibmacro*{journal}{
        \iffieldundef{shortjournal}
            {\printtext[journaltitle]{%
                \printfield[titlecase]{journaltitle}%
                \setunit{\subtitlepunct}%
                \printfield[titlecase]{journalsubtitle}}}
            {\printtext[journaltitle]{\printfield{shortjournal}}}%
        \newunit
        \printfield{journaltitleaddon}}
    
\usepackage{hyperref} 
    \hypersetup{%
        bookmarksopen  = true,    
        bookmarksdepth = 1,       
        colorlinks     = true,    
        urlcolor      = blue, 
        linkcolor     = blue, 
        citecolor     = blue, 
    }%

\usepackage{mathscinet} 

\usepackage[capitalize,noabbrev]{cleveref} 
    \crefname{proposition}{Proposition}{Propositions}   
    \crefname{const}{Construction}{Constructions}
    \crefname{subthm}{Theorem}{Theorems}   
    \crefname{sublem}{Lemma}{Lemmas}   
    \crefname{subprop}{Proposition}{Propositions}   
    \crefname{subcor}{Corollary}{Corollaries}   
    \crefname{subdef}{Definition}{Definitions}   
    \crefname{cond}{Condition}{Conditions}   
    \crefname{conj}{Conjecture}{Conjectures}
    \crefname{part}{Part}{Parts}   
    \crefname{problem}{Problem}{Problems}
    \crefname{exercise}{Exercise}{Exercises}
    \crefname{case}{Case}{Cases} 
    \crefformat{ineq}{Inequality~(#2#1#3)}   
        \crefrangeformat{ineq}{Inequalities~(#3#1#4) to~(#5#2#6)}
        \crefmultiformat{ineq}{Inequalities~(#2#1#3)}%
            { and~(#2#1#3)}{, (#2#1#3)}{ and~(#2#1#3)}
    \crefformat{isom}{Isomorphism~(#2#1#3)}   
        \crefrangeformat{isom}{Isomorphisms~(#3#1#4) to~(#5#2#6)}
        \crefmultiformat{isom}{Isomorphisms~(#2#1#3)}%
            { and~(#2#1#3)}{, (#2#1#3)}{ and~(#2#1#3)}
    \crefformat{eqns}{Equations~(#2#1#3)}   
    \crefformat{containment}{Containment~(#2#1#3)}   
        \crefrangeformat{containment}{Containments~(#3#1#4) to~(#5#2#6)}
        \crefmultiformat{containment}{Containments~(#2#1#3)}%
            { and~(#2#1#3)}{, (#2#1#3)}{ and~(#2#1#3)}
    \crefformat{containments}{Containments~(#2#1#3)}   
    \AddToHook{env/proposition/begin}{\crefalias{theorem}{proposition}}
    \AddToHook{env/lemma/begin}{\crefalias{theorem}{lemma}}
    \AddToHook{env/remark/begin}{\crefalias{theorem}{remark}}
    \AddToHook{env/corollary/begin}{\crefalias{theorem}{corollary}}
    \AddToHook{env/definition/begin}{\crefalias{theorem}{definition}}
    \AddToHook{env/example/begin}{\crefalias{theorem}{example}}
    \makeatletter
    \AddToHook{begindocument/end}{%
        \def\ltx@label#1{%
            \my@ams@cref@label#1\@nil
        }%
    
        \def\my@ams@cref@label{%
            \@ifnextchar[%
                {\my@ams@cref@label@opt}%
                {\my@ams@cref@label@plain}%
        }%
    
        \def\my@ams@cref@label@opt[#1]#2\@nil{%
            \cref@label[#1]{#2}%
        }%
    
        \def\my@ams@cref@label@plain#1\@nil{%
            \cref@label{#1}%
        }%
    }
    \makeatother

\usepackage{crossreftools}
\DeclarePairedDelimiter{\nparens}{\lparen}{\rparen} 
\newcommand{\bigparens}{\nparens[\big]} 
\DeclarePairedDelimiter{\nverts}{\lvert}{\rvert} 

\newcommand{\ncard}{\nverts} 
\newcommand{\card}{\ncard*} 

\newcommand{\stsymbol}[1]{#1\vert}      
\newcommand{\st}{} 
\newcommand{\nst}[1][]{
    \mathchoice{\;}{\;}{\,}{\,} 
    \stsymbol{#1}
    \allowbreak 
    \mathchoice{\;}{\;}{\,}{\,} 
    \mathopen{} 
}%

\DeclarePairedDelimiterX{\nsetof}[1]{\lbrace}{\rbrace}{
    \renewcommand{\st}{\nst[\delimsize]} 
    #1
}
\newcommand{\setof}{\nsetof*} 

\DeclarePairedDelimiterX{\ngenby}[1]{\langle}{\rangle}{
    \renewcommand{\st}{\nst[\delimsize]} 
    #1
}

\newcommand{\defing}[1]{\textup{\textbf{#1}}} 
\providecommand{\keywords}[1]{
    \par\medskip
    \noindent\textbf{Keywords:} #1\par
}
\providecommand{\subjclass}[1]{
    \par\smallskip
    \noindent\textbf{2020 Mathematics Subject Classification:} #1\par
}

\newcommand{\Z}{\mathbb{Z}} 
\newcommand{\Znn}{\Z_{\ge 0}} 
\newcommand{\Zp}{\Z_{\ge 1}} 
\newcommand{\Q}{\mathbb{Q}} 
\newcommand{\R}{\mathbb{R}} 
\providecommand{\C}{\mathbb{C}} 

\DeclareMathOperator{\lcm}{lcm} 

\newcommand{\deftobe}{\coloneq} 

\newcommand{\tensor}{\otimes}
\newcommand{\maps}{\colon} 

\newcommand{\resto}[2]{{
   \left.\kern-\nulldelimiterspace 
   #1 
   \right\vert_{#2} 
}}

\renewcommand{\epsilon}{\varepsilon}
\renewcommand{\subset}{\subseteq}

\renewcommand*{\mod}[1]{\;\operatorname{mod} {#1}}

\renewcommand{\phi}{\varphi} 

\let\isom\cong 
\renewcommand{\cong}{\equiv} 

\makeatletter
\newcommand\RedeclareMathOperator{%
    \@ifstar{\def\rmo@s{m}\rmo@redeclare}{\def\rmo@s{o}\rmo@redeclare}%
}
\newcommand\rmo@redeclare[2]{%
    \begingroup \escapechar\m@ne\xdef\@gtempa{{\string#1}}\endgroup
    \expandafter\@ifundefined\@gtempa
        {\@latex@error{\noexpand#1undefined}\@ehc}%
        \relax
    \expandafter\rmo@declmathop\rmo@s{#1}{#2}}
\newcommand\rmo@declmathop[3]{%
    \DeclareRobustCommand{#2}{\qopname\newmcodes@#1{#3}}%
}
\@onlypreamble\RedeclareMathOperator
\makeatother

\makeatletter
\DeclareRobustCommand{\rvdots}{%
    \vbox{%
        \baselineskip4\p@\lineskiplimit\z@
        \kern-\p@
        \hbox{.}\hbox{.}\hbox{.}
    }%
}%
\makeatother

\DeclareFontFamily{U}{mathb}{\hyphenchar\font45}
\DeclareFontShape{U}{mathb}{m}{n}{
      <5> <6> <7> <8> <9> <10> gen * mathb
      <10.95> mathb10 <12> <14.4> <17.28> <20.74> <24.88> mathb12
      }{}
\DeclareSymbolFont{mathb}{U}{mathb}{m}{n}
\DeclareMathSymbol{\lefttorightarrow}      {3}{mathb}{"FC}
\DeclareMathOperator{\spn}{span}

\newcommand*{\dualvs}[1]{#1^{\ast\!}} 
\newcommand*{\trans}[1]{#1^{T}} 

\newcommand{\g}{\ensuremath{\mathfrak{g}}}   
\newcommand{\h}{\ensuremath{\mathfrak{h}}}   
\newcommand{\ad}{\operatorname{ad}}   
\newcommand{\hd}{\dualvs{\h}}   
\newcommand{\roots}{\Phi}   
\newcommand*{\co}[1]{#1^{\vee}}   
\newcommand{\posroots}{\roots^{+}}   
\newcommand{\weights}{\Lambda}   
\newcommand{\Wg}{\mathfrak{W}}   
\newcommand{\An}{\mathrm{A}_{n}}   
\newcommand{\Bn}{\mathrm{B}_{n}}   
\newcommand{\Cn}{\mathrm{C}_{n}}   
\newcommand{\Dn}{\mathrm{D}_{n}}   
\newcommand{\En}{\mathrm{E}}   
\newcommand{\F}{\mathrm{F}_{4}}   
\newcommand{\G}{\mathrm{G}_{2}}   
\newcommand{\Wc}{\mathcal{W}} 

\DeclareMathOperator{\conv}{conv} 
\DeclareMathOperator{\vertx}{vert} 

\newcommand*{\ehr}[1]{\operatorname{ehr}_{#1}} 
\DeclareMathOperator{\den}{den} 

\theoremstyle{definition}   
\newtheorem{theorem}{Theorem}[section]   

\newtheorem*{theorem*}{Theorem}
\newtheorem{lemma}[theorem]{Lemma}

\newtheorem*{lemma*}{Lemma}

\newtheorem{proposition}[theorem]{Proposition}
\newtheorem*{proposition*}{Proposition}

\newtheorem{conjecture}{Conjecture}

\newcommand{\thistheoremname}{}
\newtheorem*{genericthm}{\thistheoremname}

\theoremstyle{definition}
\newtheorem{example}[theorem]{Example}

\newtheorem{definition}[theorem]{Definition}

\newtheorem*{definition*}{Definition}

\newtheorem*{correct}{CORRECTION}

\theoremstyle{remark}

\newtheorem*{claim*}{Claim}

\newenvironment{defnconditions}{%
    \begin{enumerate}[label=(\roman*), ref={(\roman*)}]
}{%
    \end{enumerate}
}%

\pgfdeclarelayer{background}
\pgfdeclarelayer{foreground}
\pgfdeclarelayer{lattice}
\pgfdeclarelayer{axes}
\pgfdeclarelayer{polygon1}
\pgfsetlayers{background, main, polygon1, axes, lattice, foreground}

\tikzset{%
    every picture/.style={scale=1},
    blackpoint/.style={circle, inner xsep=1.5pt, inner ysep=0pt, fill=black},
    graypoint/.style={circle, inner xsep=3pt, inner ysep=0pt, fill=gray},
    redpoint/.style={circle, inner xsep=3pt, inner ysep=0pt, fill=red},
    bluepoint/.style={circle, inner xsep=3pt, inner ysep=0pt, fill=blue},
    axisstyle/.style={-latex, line width = 1pt},
    arrowstyle/.style={-latex, line width = 1pt},
    polygonstyle/.style={fill=black!25, thick, rounded corners=0.001pt},
}

\definecolor{ione}{HTML}{0F6E56}
\definecolor{itwo}{HTML}{C0501F}
\usetikzlibrary{arrows.meta}
\tikzset{wt/.style={black!45},trailstep/.style={-{Stealth[length=5pt]},line width=1.1pt,shorten <=2.5pt,shorten >=2.5pt},lab/.style={font=\scriptsize,fill=white,inner sep=0.8pt}}
\tikzset{seg/.style={line width=1.2pt,line cap=round}}
\definecolor{pathred}{HTML}{C62828}

\newcommand{\unibi}{\unichar{"1D422}}   
\newcommand{\unisubtwo}{\unichar{"2082}}   

\usepackage{booktabs}
\usepackage{caption, subcaption}
\usepackage{float}
\usepackage{physics}
\usepackage{longtable}

\newcommand{\liebracket}[2]{[{#1}\ {#2}]}
\newcommand{\wa}{w_{1}}   
\newcommand{\wb}{w_{2}}   
\newcommand{\rf}{\spn_{\R}(\roots)}   
\newcommand{\ld}[1]{^{L}#1} 
\newcommand{\clnm}{\ensuremath{c_{\lambda,\nu}^{\mu}}}
\newcommand{\lng}[1]{\ell(#1)} 
\newcommand{\bz}{\mathrm{BZ}_{\lambda\nu}^{\mu}(\g)}  
\newcommand{\bzlmn}{\bz}  
\newcommand{\bzlmnc}{\text{c-}\bzlmn}  
\newcommand{\bzlmnd}{\text{d-}\bzlmn}  
\newcommand{\pip}{T_\lambda^{\gamma,\delta}}   
\renewcommand{\i}{\ensuremath{\mathbf{i}}} 
\newcommand*{\DD}{\mathcal{D}} 
\newcommand{\PP}{\mathcal{P}} 
\renewcommand{\P}{\PP} 
\newcommand{\FF}{\mathcal{F}} 
\DeclareMathOperator{\Div}{Div} 
\newcommand{\II}{\mathcal{I}} 
\newcommand{\posrootsmatrix}{[\roots^{+}]}   

\newcommand{\sagemath}{SageMath}
\newcommand{\TikZ}{\textup{Ti\textit{k}Z}}

\AtBeginBibliography{
    \setlength{\emergencystretch}{1em}%
}

\title{%
    Berenstein--Zelevinsky polytopes for type \(\G\)
}%

\author{%
    Thomas Magnuson%
    \thanks{
        \href{mailto:tmagnus1@uwyo.edu}{
            \nolinkurl{tmagnus1@uwyo.edu}
        }
    }
    \and 
    Tyrrell McAllister%
    \thanks{
        \href{mailto:tmcallis@uwyo.edu}{
            \nolinkurl{tmcallis@uwyo.edu}
        }
    }
}%

\date{}

\begin{document}

\maketitle

\begin{abstract}
    In 2001, Berenstein and Zelevinsky presented a polyhedral model for the problem of computing tensor-product multiplicities for semi\-simple Lie algebras. We provide explicit linear equations and inequalities for Berenstein--Zelevinsky (BZ) polytopes of type-$\mathrm{G}_{2}$ Lie algebras. We prove that the denominator of a type-$\mathrm{G}_{2}$ BZ~polytope divides $180$, and that this bound is sharp, while the quasi-period of the Ehrhart quasipolynomial of such a polytope is always a divisor of $6$. We also show that BZ~polytopes of type-$\mathrm{G}_{2}$ Lie algebras can fail to satisfy Ehrhart positivity, in contrast to a conjectured property of BZ~polytopes for all classical types.
\end{abstract}

\keywords{%
    Berenstein--Zelevinsky polytopes,
    tensor product multiplicities,
    Ehrhart theory
}

\subjclass{%
    17B10; 52B05
}

\section{Introduction}
\label{sec: introduction}

Let $\g$ be a finite-dimensional complex semisimple Lie algebra. The \emph{Clebsch--Gordan problem} is to compute the multiplicity $\clnm$ with which the finite-dimensional irreducible $\g$-module $V_{\mu}$ appears in the tensor product $V_{\lambda} \tensor V_{\nu}$, where $\lambda$, $\mu$, and $\nu$ are dominant weights of $\g$. Berenstein and Zelevinsky developed a \emph{polyhedral model} for this problem in a series of papers culminating in \cite{BerZel2001}. They constructed families $\setof{\PP_{\lambda,\nu}^{\mu}}$ of rational polytopes, parametrized by triples $(\lambda, \mu, \nu)$ of dominant weights of $\g$, in which $\clnm$ is precisely the number of integer-lattice points: $\card{\PP_{\lambda,\nu}^{\mu} \cap \Z^{\ell}} = \clnm$. We call the polytopes in these families \emph{BZ polytopes}.  (See \cref{sec: background} for precise definitions.)

For the cases in which $\g$ is of classical type $\An, \Bn, \Cn,$ or $ \Dn$, Berenstein and Zelevinsky provided completely explicit systems of linear equations and inequalities for the BZ polytopes, with the type-$\An$ case appearing in \cite[Section~2]{BerZel1992} and the remaining classical types in \cite[Section~2.3]{BerZel2001}. However, for the exceptional types $\En_{6}$, $\En_{7}$, $\En_{8}$, $\F,$ and $ \G$, the definition of the BZ~polytopes in \cite{BerZel2001} is not entirely explicit, as this definition is stated in terms of so-called \emph{$\i$\nobreakdash-trails}, which in turn are not given a purely combinatorial definition.

Fix a reduced word $\i$ for the longest element of the Weyl group of $\g$. Roughly speaking, a sequence $\pi$ of weights of a $\g$-module is an $\i$-trail if $\pi$ satisfies two conditions. The first condition (\cref{defn: potential i-trail} below) is purely combinatorial. When this first condition alone is imposed, we call $\pi$ a \emph{potential $\i$-trail}. To be an (actual) $\i$-trail, the sequence $\pi$ must satisfy a second, non-combinatorial, condition that we call the \emph{composition rule} (\cref{defn: i-trail} below).

In \cref{subsec: Triviality of the composition rule in type G2} below, we prove that, in type $\G$, \emph{every} potential $\i$-trail arising in the definition of the BZ polytopes satisfies the composition rule and so is an $\i$-trail.

\begin{theorem}\label{thm: in G2 every relevant potential i-trail is an actual i-trail}
    Let $\g$ be of type $\G$ and let $\i$ be a reduced word for the longest element of the Weyl group of $\g$. Then every potential $\i$-trail arising in the definitions of the BZ polytopes in \cref{defn: c-vec polytope,defn: d-vec polytope} below is an $\i$-trail.
\end{theorem}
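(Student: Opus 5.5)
The plan is a finite, explicit verification: enumerate every relevant potential $\i$-trail, then check the composition rule for each one by direct computation in concrete matrix models of the modules involved. Nothing here is conceptually deep. The work is in organizing the enumeration so that it is exhaustive and the nonvanishing checks are transparent.

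First, I reduce the data. The Weyl group of type $\G$ is dihedral of order $12$, with longest element of length $6$. Hence there are exactly two reduced words, $\i=(1,2,1,2,1,2)$ and $\i=(2,1,2,1,2,1)$. In \cref{defn: c-vec polytope,defn: d-vec polytope}, the trails arise in the fundamental modules $V_{\omega_1}$ and $V_{\omega_2}$, or in those of the Langlands dual; the Langlands dual of $\G$ is again $\G$ with the roles of long and short roots interchanged, so only these two modules occur. Each trail goes between two prescribed extremal weights, of the form $s_{i}\omega_{i}$ and $w_0\omega_i$ up to the paper's conventions. Concretely:
\begin{itemize}
\item $V_{\omega_1}$ is the $7$-dimensional module; all its weight spaces, including the zero weight, are one-dimensional.
\item $V_{\omega_2}$ is the $14$-dimensional adjoint module; its zero weight space is two-dimensional and all other weight spaces are one-dimensional.
\end{itemize}
A potential $\i$-trail $\pi=(\gamma_0,\dots,\gamma_6)$ must satisfy two conditions: each difference $\gamma_{k-1}-\gamma_k$ is a nonnegative integer multiple $c_k\alpha_{i_k}$, and every $\gamma_k$ is a weight of the module. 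So for each reduced word, each module, and each pair of endpoints, I enumerate all such sequences. I would do this by hand using the weight diagrams, and cross-check it with a short \sagemath{} computation. This yields a small explicit list.

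Second, I verify the composition rule for each trail on the list. That rule requires the operator $e_{i_1}^{c_1}\cdots e_{i_6}^{c_6}$, restricted to the weight space $V(\delta)$, to be nonzero into $V(\gamma)$. I fix explicit Chevalley-basis matrices for $e_1,e_2$ on both modules, for example from the standard $7$-dimensional representation and the adjoint action on a Chevalley basis. The argument then proceeds step by step. Whenever a step moves from a one-dimensional weight space $V(\mu)$ to $V(\mu+c\alpha_i)$, I apply $\mathfrak{sl}_2$-string theory. The operator $e_i^{c}$ is nonzero on a vector of weight $\mu$ lying in the irreducible $\mathfrak{sl}_2^{(i)}$-submodule generated by that weight space, provided $\mu+c\alpha_i$ is still in the string. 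The $\alpha_i$-strings in these modules have length at most $4$, and the multiplicity-one weights lie in a single string. Hence each such step is automatically injective on the relevant line, and these steps need no computation beyond reading off string lengths.

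The main obstacle is the adjoint module $V_{\omega_2}$, at trails that pass through the two-dimensional zero weight space. There the image of the previous step is a specific line $L\subset\h$, and the next operator $e_i^{c}$ kills the one-dimensional subspace $\ker\alpha_i$-type direction in $\h$ (concretely, $e_{\alpha_i}$ acts on $\h$ by $h\mapsto -\alpha_i(h)e_{\alpha_i}$). So cancellation is possible exactly when $L$ lies in that kernel. For each such trail I therefore compute $L$ explicitly: it is spanned by a coroot $h_{\beta}$ obtained from $[e_{\alpha_j},e_{-\alpha_j}]$-type brackets. I then check that $\alpha_{i}(L)\neq 0$, or more generally that the product is nonzero, with $e_i^2$ acting through the $\mathfrak{sl}_2$-triple. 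I expect only a handful of such cases, for instance trails going $-\alpha_j\to 0\to\alpha_i$ with $i\neq j$. Each reduces to the nonvanishing of a Cartan integer such as $\langle\alpha_i,\co{\alpha_j}\rangle\in\{-1,-3\}$, which is never zero for $i\neq j$ in $\G$. With these cases settled, every potential trail on the list satisfies the composition rule, which proves \cref{thm: in G2 every relevant potential i-trail is an actual i-trail}.
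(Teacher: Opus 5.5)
Your proof is correct, and its key computation matches the paper's for the hardest case, but the overall route differs. The paper treats $\i=(1,2,1,2,1,2)$ and calls the other word similar. For the $7$-dimensional module and for the lone condition-(3) trail in the adjoint module, it does no computation. It reads off from the explicit pictures that all relevant potential trails share a single path, then gets a nonzero path operator from Berenstein--Zelevinsky's Proposition~9.2, which gives $\i$-trails between extremal weights $u\omega$ and $v\omega$. Only for the condition-(4) trails in the adjoint module does it argue locally, exactly as you do: root-string nonvanishing along multiplicity-one segments, and $e_je_i\cdot f_i=-a_{i,j}e_j\neq 0$ where a path turns at the zero weight.

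Applying that local argument uniformly removes the dependence on Proposition~9.2 and on the single-path observation, and it proves more than you claim. The endpoints are extremal, so $V(\delta)$ is a line. The only weight of multiplicity greater than one is the zero weight of the adjoint module. Reading the operator from $V(\delta)$ upward, a path vertex there is entered from $-\alpha_k$ by $e_k$ and left by $e_m$ with $m\neq k$, because consecutive blocks of a path carry different indices. Hence every potential $\i$-trail between the relevant endpoints, for either reduced word, satisfies the composition rule. Your enumeration is therefore only a sanity check, not a necessary step. The paper's route is shorter for the easy cases, but it leans on the explicit trail pictures and an external result.

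A few small corrections.
\begin{enumerate}
\item In the paper's conventions the modules are $\ld{\g}$-modules with $\co{\alpha_1}$ long. So $V_{\co{\omega_1}}$ is the $14$-dimensional adjoint module and $V_{\co{\omega_2}}$ is the $7$-dimensional one, the reverse of your labels. This is harmless because you cover both words and both modules with a label-independent argument. Your explicit matrices must still use the Chevalley generators of $\ld{\g}$ consistently.
\item You should also list the second family of endpoints, from $\co{\omega_i}$ to $w_0s_i\co{\omega_i}$.
\item The $\mathfrak{sl}_2$ criterion should say that $\mu+c\alpha_i$ is a weight of the irreducible constituent containing $V(\mu)$. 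This agrees with ``lies in the string'' here only because the one reducible string, $\{-\alpha_k,0,\alpha_k\}$ in the adjoint module, has a $3$-dimensional constituent containing all three weights.
\end{enumerate}
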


As a result of this theorem, the $\i$-trails required for type-$\G$ BZ polytopes have an entirely combinatorial description. In \cref{subsec: potential i-trails as lattice points}, we use this description to express the potential $\i$-trails in a $\g$-module~$V_{\lambda}$ from a weight~$\gamma$ to a weight~$\delta$ as the lattice points in a particular rational polytope $\pip$. In type~$\G$, lattice-point enumeration algorithms then provide an effective means of computing all $\i$-trails appearing in the definitions of BZ polytopes. We list these $\i$-trails in \cref{subsec: Potential i-trails for type-G2 BZ polytopes}.%
\footnote{
    Late in the preparation of this manuscript, we learned that Kanakubo, Koshevoy, and Nakashima \cite{KKN2022} show that the monomials of certain generalized minors are in bijection with $\i$-trails. Further, for type $\G$ these authors compute the monomials and explain how to recover the relevant $\i$-trails. However, to the best of our knowledge, they do not establish that every relevant potential $\i$-trail in type $\G$ is an actual $\i$-trail.
}

The list of these $\i$-trails in \cref{subsec: Potential i-trails for type-G2 BZ polytopes} allows us to write the BZ~polytopes for type~$\G$ as an explicit system of linear equations and inequalities. In particular, the BZ polytopes defined in \cref{defn: c-vec polytope} with $\i = (1,2,1,2,1,2)$ are given in the following theorem.

\begin{theorem}\label{thm: explicit equations and inequalities for type G2}
    Let $\g$ be of type \(\G\) and let $(\lambda_{1},\lambda_{2})$, $(\mu_{1}, \mu_{2})$, and $(\nu_{1}, \nu_{2})$ be dominant weights of $\g$ written in the basis of fundamental weights. Then $\clnm$ is the number of integral $6$-tuples $(t_{1}, \dotsc, t_{6}) \in \Z^{6}$ such that
    \begin{enumerate}
        \item
        $t_{1} \ge 0$ and \\ $6t_{2} \ge 2t_{3} \ge 3t_{4} \ge 2t_{5} \ge 6t_{6} \ge 0$,
        
        \item 
        $t_{1} + t_{3} + t_{5} = 2(\lambda_{1} + \nu_{1} - \mu_{1}) + 3(\lambda_{2} + \nu_{2} - \mu_{2})$, \\
        $t_{2} + t_{4} + t_{6} = \phantom{2}(\lambda_{1} + \nu_{1} - \mu_{1}) + 2(\lambda_{2} + \nu_{2} - \mu_{2})$,
        
        \item 
        $2\lambda_{1} + 6\lambda_{2} \ge 2t_{1} + 6\lambda_{2} \ge 2t_{2} + 4 \lambda_{2} \ge t_{3} + 3\lambda_{2} \ge 2t_{4} + 2\lambda_{2} \ge 2t_{5}$ \\ and $\lambda_{2} \ge t_{6}$,
        
        \item 
        $\nu_{1} 
            \ge
            \max \setof{
                \begin{matrix*}[r]
                    t_{1} - 3t_{2} + 2t_{3} - 3t_{4} + 2t_{5} - 3t_{6}, \\
                    t_{3} - 3t_{4} + 2t_{5} - 3t_{6}, \\
                    t_{5} - 3t_{6}\phantom{,}
                \end{matrix*}
            }$,
        
        \noindent
        $\nu_{2} 
            \ge
            \max \setof{
                \begin{matrix*}[r]
                    t_{2} - t_{3} + 2t_{4} - t_{5} + 2t_{6}, \\
                    t_{4} - t_{5} + 2t_{6}, \\
                    t_{6}\phantom{,}
                \end{matrix*}
            }$.
    \end{enumerate}
\end{theorem}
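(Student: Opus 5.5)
The plan is to specialize the Berenstein--Zelevinsky description of the polytope in \cref{defn: c-vec polytope} to $\i = (1,2,1,2,1,2)$, using \cref{thm: in G2 every relevant potential i-trail is an actual i-trail} to replace $\i$-trails by potential $\i$-trails. The latter are purely combinatorial and can be enumerated as the lattice points of the polytopes $\pip$. Recall the shape of the BZ description. The polytope lives in the space of $\i$-Lusztig data $(t_1,\dots,t_6)$ and is cut out by three kinds of conditions. First, there are the \emph{$\i$-trail inequalities} $\sum_k c_k(\pi) t_k \ge 0$, one for each $\i$-trail $\pi$ from $s_i\omega_i$ to $w_0\omega_i$ in the fundamental representation $V_{\omega_i}$. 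Second, there is the weight condition $\sum_k t_k\alpha_{i_k} = \lambda+\nu-\mu$. Third, there are the conditions involving $\lambda$ and $\nu$: one family from $\i$-trails from $\omega_i$ to $w_0 s_i\omega_i$ bounded by $\lambda$, and one from the tail sums $\sum_{l>k} a_{i_k i_l}t_l$ bounded by $\nu$. I will treat these three blocks separately; they should match items (i)--(iv) of the theorem.

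The weight condition is the easiest. Write $\lambda+\nu-\mu = t_{\mathrm{odd}}\alpha_1 + t_{\mathrm{even}}\alpha_2$. Then convert fundamental-weight coordinates to root coordinates using the inverse Cartan matrix of $\G$. With $\alpha_1$ short, this gives $\omega_1 = 2\alpha_1+\alpha_2$ and $\omega_2 = 3\alpha_1+2\alpha_2$, which yields exactly item (ii). The $\nu$-inequalities in item (iv) follow from the explicit formula $\nu_{i_k} \ge \sum_{l>k} a_{i_k i_l} t_l + t_k$ (in the convention of \cref{defn: c-vec polytope}). Here $a_{12}=-3$ and $a_{21}=-1$, and one reads off the three inequalities for each of $\nu_1$ and $\nu_2$ by taking $k=1,3,5$ and $k=2,4,6$. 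I will check the sign and indexing conventions against the definition, since the stated expressions, such as $t_1-3t_2+2t_3-\cdots$, show that the diagonal entries $a_{ii}=2$ enter for later positions.

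The two trail-dependent blocks need the explicit list of $\i$-trails in \cref{subsec: Potential i-trails for type-G2 BZ polytopes}. For each trail $\pi=(\gamma_0,\dots,\gamma_6)$, compute the coefficients $c_k(\pi) = \langle (\gamma_{k-1}+\gamma_k)/2, \co\alpha_{i_k}\rangle$. For trails in $V_{\omega_1}$ (the $7$-dimensional representation) and $V_{\omega_2}$ (the adjoint representation), this yields a finite list of linear forms. I expect the $\omega$-trails to produce $t_1\ge0$ together with the chain $6t_2\ge2t_3\ge3t_4\ge2t_5\ge6t_6\ge0$ in item (i), and the $\lambda$-trails to produce the chain in item (iii) together with $\lambda_2\ge t_6$. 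The main obstacle is this step. The enumeration, especially in the adjoint representation, produces more inequalities than appear in the statement, some with different coefficients. I will show that each extra inequality is a nonnegative combination of the listed ones, modulo the equalities in item (ii), so it is redundant. Conversely, every listed inequality must come from an actual trail. I plan to do this redundancy check by expressing each extra form explicitly as a positive combination, a finite linear-algebra verification that can be certified by computer, for example with Farkas certificates in SageMath.

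Finally, I will assemble the pieces. By the BZ theorem, $\clnm$ equals the number of lattice points in the polytope. The polytope is defined by the full system, and this system is equivalent to items (i)--(iv) after the redundant inequalities are removed. This gives the theorem.
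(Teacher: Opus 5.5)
Your overall strategy is sound, but the version of the Berenstein--Zelevinsky system you recall swaps the two trail families. As a result, the computation as you describe it would not produce items (i) and (iii).

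Your weight equation $\sum_k t_k\alpha_{i_k}=\lambda+\nu-\mu$ and your $\nu$-tail sums come from \cref{defn: d-vec polytope}, although you cite \cref{defn: c-vec polytope} and write $c_k(\pi)$ for what is really the d-vector $d_k(\pi)$. In that definition:
\begin{itemize}
\item the homogeneous inequalities $\sum_k d_k(\pi)t_k\ge 0$ are indexed by trails from $\omega_i^\vee$ to $w_0s_i\omega_i^\vee$;
\item the inequalities $\sum_k d_k(\pi)t_k\ge-\lambda(\alpha_i^\vee)$ are indexed by trails from $s_i\omega_i^\vee$ to $w_0\omega_i^\vee$.
\end{itemize}
Your assignment is the reverse. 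Under it, the unique trail from $s_1\omega_1^\vee$ to $w_0\omega_1^\vee$ (c-vector $(0,3,2,3,1,0)$, d-vector $(-1,0,0,0,0,0)$) yields $t_1\le 0$ instead of $t_1\le\lambda_1$. Likewise, the unique trail from $\omega_2^\vee$ to $w_0s_2\omega_2^\vee$ (c-vector $(0,1,1,2,1,0)$, d-vector $(0,0,0,0,0,1)$) yields $t_6\ge-\lambda_2$ instead of $t_6\ge0$. No redundancy check can turn that system into the stated one.

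There is a related slip in the representations. The trails live in modules of the Langlands dual $\ld{\g}$, where $\alpha_1^\vee$ is the long simple root. So $V_{\omega_1^\vee}$ is the $14$-dimensional adjoint module and $V_{\omega_2^\vee}$ the $7$-dimensional one, with steps along $\alpha_{i_k}^\vee$; this is the reverse of your identification. Working in $\g$'s own $7$-dimensional module for $i=1$ would admit, for example, the trail $(2\alpha_1+\alpha_2,\alpha_1+\alpha_2,\alpha_1,-\alpha_1,-\alpha_1,-\alpha_1,-\alpha_1-\alpha_2)$. That trail gives $t_4\ge 2t_5$, which fails at the vertex $(0,\tfrac12,\tfrac32,1,\tfrac32,\tfrac12)$ listed in the paper for $\lambda=\mu=\nu=\omega_2$. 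Taking the trails from the paper's list in \cref{subsec: Potential i-trails for type-G2 BZ polytopes} avoids this problem.

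Once these are fixed, your plan proves the statement, and it differs usefully from the paper's write-up. The paper reads off the c-vector inequalities of \cref{defn: c-vec polytope}, together with $\sum_k t_k s_{i_1}\cdots s_{i_{k-1}}\alpha_{i_k}=\lambda+\nu-\mu$, whose $\alpha_1$-coefficient is $t_1+3t_2+2t_3+3t_4+t_5$. But the stated system is, item by item, \cref{defn: d-vec polytope}(i)--(iv). Indeed, the vertex $(0,1,3,1,0,0)$ of the stated polytope for $\lambda=\mu=\nu=\omega_2$ violates the c-vector equation.

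So the d-vector route is the one that produces the stated system directly, and it costs nothing extra. Both definitions use the same four trail families, so the paper's enumeration and \cref{thm: in G2 every relevant potential i-trail is an actual i-trail} apply verbatim. Concretely:
\begin{itemize}
\item the $14$ trails from $\omega_i^\vee$ to $w_0s_i\omega_i^\vee$ give the six inequalities of item (i) plus eight redundant ones, such as $t_2\ge t_6$, $t_3\ge t_5$, $t_4\ge 2t_6$;
\item the $7$ trails from $s_i\omega_i^\vee$ to $w_0\omega_i^\vee$ give item (iii) plus the redundant $t_2-t_4\ge-\lambda_2$.
\end{itemize}
Each redundant form is a nonnegative combination of the listed chain inequalities, so you do not need the equalities for the redundancy step.
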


In \cite{DeLMcA2006}, the second author and De~Loera applied methods from polyhedral geometry, especially Ehrhart theory, to the BZ polytopes of classical type. This approach led to a conjectured generalization of the \emph{Saturation Theorem} of Knutson and Tao.

\begin{theorem}[Saturation Theorem \cite{KnuTao1999}]
    Let $\g$ be of type~$\An$, and let $\lambda, \mu, \nu$ be dominant weights of $\g$. If $c_{t\lambda, t\nu}^{t\mu} > 0$ for some $t \in \Zp$, then $\clnm > 0$.
\end{theorem}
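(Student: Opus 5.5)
This result is quoted from \cite{KnuTao1999}, and I would follow their polyhedral strategy, translated to the BZ~polytopes of type~$\An$ given explicitly in \cite[Section~2]{BerZel1992}. First I reduce to a statement about rational polytopes. The defining equations and inequalities of $\PP_{\lambda,\nu}^{\mu}$ are linear in $(\lambda,\mu,\nu)$ and homogeneous in the unknowns, so $\PP_{t\lambda,t\nu}^{t\mu} = t\,\PP_{\lambda,\nu}^{\mu}$. Hence $c_{t\lambda,t\nu}^{t\mu}>0$ gives a lattice point of $t\PP_{\lambda,\nu}^{\mu}$. Dividing it by $t$ gives a rational point of $\PP_{\lambda,\nu}^{\mu}$. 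The theorem therefore reduces to the following claim:
\begin{quote}
\emph{if $\lambda,\mu,\nu$ are integral and $\PP_{\lambda,\nu}^{\mu}\neq\emptyset$, then $\PP_{\lambda,\nu}^{\mu}$ contains an integer point.}
\end{quote}
It is convenient to pass, via the standard linear unimodular change of coordinates, from BZ~patterns to hives (equivalently honeycombs). In hive coordinates the inequalities become the local rhombus inequalities on a triangular array, and the boundary values are partial sums of $\lambda,\mu,\nu$.

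Next I choose a distinguished point. I fix a generic linear functional, namely a strictly positive weighted sum of all interior hive entries, and take a point $h$ of the polytope maximizing it. I call $h$ a \emph{largest lift}; genericity makes it a vertex. I then partition the triangular array into \emph{flatspaces}: maximal regions on which $h$ is affine, i.e.\ where the rhombus inequalities are tight. I also mark the interior entries of $h$ that are non-integral. The aim is to show that the set of non-integral entries is empty.

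The key step, and the main obstacle, is this: assuming some entry is non-integral, I would construct a nonzero perturbation direction $v$ supported on the non-integral region such that $h\pm\epsilon v$ stays in the polytope. This contradicts $h$ being a vertex; alternatively, $v$ increases the functional, contradicting maximality. The idea is as follows.
\begin{enumerate}
    \item Integrality of the boundary data forces every flatspace touching the boundary to have integral slopes.
    \item A flatspace with non-integral values is therefore bounded by edges along which it can be slid, as in the ``puzzle/honeycomb'' deformations.
\end{enumerate}
The delicate part is ruling out configurations where these slides are blocked, namely closed cycles of flatspaces meeting at degenerate (tight) vertices. This is Knutson--Tao's ``no purple loops / gentle loop'' analysis. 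The largest-lift choice is exactly what excludes such blocked cycles, since a blocked cycle would produce an increasing direction. I expect to spend most effort on this local combinatorial case analysis of how three or more flatspaces can meet. Once it is settled, $h$ is integral, hence a lattice point of $\PP_{\lambda,\nu}^{\mu}$, and $\clnm>0$.
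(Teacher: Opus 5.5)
The paper gives no proof of this statement; it simply quotes \cite{KnuTao1999}. Your outline follows the same largest-lift argument, so the real comparison is with Knutson--Tao. As written, though, your reduction fails. The claim that a nonempty $\PP_{\lambda,\nu}^{\mu}$ with $\lambda,\mu,\nu$ integral must contain a lattice point is false for weights of the semisimple algebra $\g$. In type $\mathrm{A}_{1}$ with $\lambda=\mu=\nu=\omega_{1}$, condition~(ii) of \cref{defn: c-vec polytope} reads $t_{1}\alpha_{1}=\omega_{1}=\tfrac12\alpha_{1}$, so the polytope is the single point $t_{1}=\tfrac12$. Correspondingly $c_{2\omega_{1},2\omega_{1}}^{2\omega_{1}}=1$ while $c_{\omega_{1},\omega_{1}}^{\omega_{1}}=0$. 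So the theorem as quoted needs the extra hypothesis $\lambda+\nu-\mu\in\Z\roots$.

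Knutson and Tao work with $GL_{n+1}$. There $c_{t\lambda,t\nu}^{t\mu}>0$ forces $\lvert\lambda\rvert+\lvert\nu\rvert=\lvert\mu\rvert$, and this condition survives division by $t$. In your hive translation the issue is hidden in ``the boundary values are partial sums of $\lambda,\mu,\nu$''. Integral boundary labels that close up around the triangle exist only when $\lambda+\nu-\mu$ lies in the root lattice. You must add that hypothesis, or pass to $GL_{n+1}$, before the reduction is valid.

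Even with that fixed, the heart of the proof---integrality of the largest lift $h$---is only named, and the mechanism you sketch would not deliver it. Your step~1 is false. Integral boundary labels make the slope of a boundary-touching flatspace integral only in the boundary direction, and give no slope at all if the flatspace meets the boundary in a single point. Its transverse slope depends on interior labels.

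What integrality of the boundary really gives, in the honeycomb picture, is the following. The edges with non-integral constant coordinate have no dead ends, by zero tension and $x+y+z=0$ at each vertex. They also never reach the integral boundary rays, so they contain a cycle. The missing lemma is that one can find among them a gentle loop. One then shows that inflating it is a legal deformation in one direction that strictly increases the functional defining $h$ (chosen so that inflations increase it), contradicting maximality.

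This has to be a one-sided move. Type-$\An$ BZ and hive polytopes do have non-integral vertices, so being a vertex cannot by itself force integrality. Your sentence ``a blocked cycle would produce an increasing direction'' also inverts the logic: it is the inflatable (gentle) loop that produces the increasing direction. Without that lemma, your proposal is a pointer to \cite{KnuTao1999} rather than a proof---which, to be fair, is also all the paper provides.
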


In view of the polyhedral model of $\clnm$ introduced by Berenstein and Zelevinsky, it is very natural to interpret the so-called \emph{stretched Clebsch--Gordan coefficient} $t \mapsto c_{t\lambda, t\nu}^{t\mu}$ as an \emph{Ehrhart quasipolynomial}.

\begin{definition}\label{defn: quasipolynomial}
    A function $\phi \maps \Zp \to \C$ is a \defing{quasipolynomial} if there exist a positive integer $p$ and polynomials $f_{1}, \dotsc, f_{p} \in \C[x]$ such that $\phi(t) = f_{i}(t)$ whenever $t \cong i \mod p$, in which case $p$ is a \defing{quasi-period} of $\phi$. The quasipolynomial is of \defing{degree-$d$} if $d$ is the maximum degree among $f_{1}, \dotsc, f_{p}$.
\end{definition}

\begin{theorem}[\cite{Ehr1962a}]
\label{thm: ehrharts theorem}
    Let $\PP \subset \R^{n}$ be a $d$-dimensional rational polytope. Then the \defing{Ehrhart function} $\ehr{\PP}(t) \deftobe \card{t\PP \cap \Z^{n}}$ ($t \in \Zp$) is a degree-$d$ rational \emph{quasipolynomial} with a minimum quasi-period that divides the denominator of~$\PP$.
\end{theorem}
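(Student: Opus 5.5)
The statement to prove is Ehrhart's theorem. Let $D$ be the denominator of $\PP$, i.e.\ the least positive integer such that $D\PP$ has integer vertices. The plan is to first show that $D$ is a quasi-period, and then identify the degree and the minimum quasi-period.

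\textbf{Reduction to a lattice polytope.} I would triangulate $\PP$ into rational simplices using no new vertices. Inclusion--exclusion then writes $\ehr{\PP}$ as a signed sum of Ehrhart functions of relatively open rational simplices $\Delta$, each with vertices in $\frac1D\Z^n$. Since sums of quasipolynomials with quasi-period $D$ are again such quasipolynomials, it suffices to treat one relatively open simplex.

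\textbf{The cone argument.} For a simplex $\Delta$ with vertices $v_0,\dots,v_k$, I would cone over it by setting $w_i = (Dv_i, D)\in\Z^{n+1}$ and $C=\R_{\ge0}\langle w_0,\dots,w_k\rangle$. Then $t\Delta\cap\Z^n$ corresponds to the lattice points of $C$ (or its relative interior, in the open case) at height $t$.
\begin{itemize}
\item Every lattice point of $C$ is uniquely $p+\sum m_i w_i$ with $m_i\in\Znn$ and $p$ in the finite fundamental half-open parallelepiped $\Pi$.
\item Hence the generating function is
\[
\sum_t \ehr{\Delta}(t)z^t=\frac{\sum_{p\in\Pi\cap\Z^{n+1}}z^{\mathrm{ht}(p)}}{(1-z^D)^{k+1}}.
\]
\item Partial fractions in $z$, with poles only at $D$th roots of unity of order at most $k+1$, show that the coefficient of $z^t$ is a quasipolynomial of quasi-period $D$ and degree at most $k$.
\end{itemize}
Summing over the simplices, $\ehr{\PP}$ is a quasipolynomial with quasi-period $D$.

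\textbf{Degree and the minimum quasi-period.} For the degree, I would show that every constituent $f_i$ has leading coefficient equal to the relative $d$-volume of $\PP$ (normalized to the lattice of its affine span). I would get this by the Riemann-sum argument: $\ehr{\PP}(t)/t^d\to\mathrm{vol}$ along each residue class, since the dilates $t\PP$ for $t\equiv i\pmod D$ remain commensurate with a common lattice translate. This volume is nonzero, so the degree is exactly $d$. The coefficients are rational because the numerator has integer coefficients and partial fractions over cyclotomic factors produce rational values on each residue class.

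For the quasi-period claim, note that the minimum quasi-period $p_0$ divides every quasi-period. Indeed, if $p$ and $q$ are both quasi-periods, then so is $\gcd(p,q)$: two polynomials agreeing on infinitely many integers coincide, and $\gcd(p,q)\Z=p\Z+q\Z$ gives the needed equalities between residue classes. Since $D$ is a quasi-period, $p_0\mid D$.

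\textbf{Main obstacle.} The main obstacle is the bookkeeping in the triangulation/inclusion--exclusion step, in particular handling lower-dimensional relatively open faces whose dimension affine span may not hit the lattice. I would handle this by applying the cone argument to each open face separately rather than insisting on an affine lattice basis. Those faces contribute lower-degree terms, which do not affect the degree claim.
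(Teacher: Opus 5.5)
The paper gives no proof of this statement; it quotes it from Ehrhart, so there is no internal argument to compare against. Your route is the standard proof: triangulate without new vertices, cone over each relatively open simplex with generators $(Dv_i,D)$, tile by the half-open parallelepiped to get a generating function with denominator $(1-z^{D})^{k+1}$, then show that quasi-periods are closed under $\gcd$. It correctly shows that $\ehr{\PP}$ is a quasipolynomial with quasi-period $D=\den(\PP)$, that every constituent has degree at most $d$, and that the minimum quasi-period divides $D$.

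\textbf{The degree step is false as stated.} You claim that \emph{every} constituent $f_i$ has leading coefficient equal to the relative volume of $\PP$, because the dilates $t\PP$ in a residue class are commensurate with a common lattice translate. This fails when $d<n$, which is exactly the case relevant here: the BZ polytopes satisfy two linear equations, so they lie in a proper affine subspace of $\R^{6}$. The phenomenon you flag for lower-dimensional faces also affects $\PP$ itself, since $\operatorname{aff}(t\PP)$ can miss $\Z^{n}$ entirely for some residues of $t$. For example, $\PP=\{\tfrac12\}\times[0,1]\subset\R^{2}$ has $d=1$ and $\den(\PP)=2$. Yet $\ehr{\PP}(t)=t+1$ for even $t$ and $\ehr{\PP}(t)=0$ for odd $t$, so the odd constituent is the zero polynomial.

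\textbf{The repair is short.} The paper's degree is the maximum over constituents, so you only need one constituent of degree exactly $d$. Take $t=Dm$, so that $t\PP=m(D\PP)$ with $D\PP$ a lattice polytope. Then $\operatorname{aff}(D\PP)=w+U$ with $w\in\Z^{n}$ and $U$ the $d$-dimensional linear space parallel to $\operatorname{aff}(\PP)$. Your Riemann-sum argument, taken relative to the rank-$d$ lattice $U\cap\Z^{n}$, gives $\ehr{\PP}(Dm)/m^{d}\to\mathrm{vol}_{U\cap\Z^{n}}(D\PP)>0$. So the residue-$0$ constituent has degree $d$, and your cone bound caps every other constituent at $d$. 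In general, a constituent has the relative volume as leading coefficient when $\operatorname{aff}(t\PP)$ meets $\Z^{n}$ for $t$ in that class, and is identically zero otherwise.

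\textbf{Two smaller points.} Your justification of rationality is vague. It is cleaner to read the constituents directly off $z^{h}/(1-z^{D})^{k+1}=\sum_{m\ge0}\binom{m+k}{k}z^{h+Dm}$. Alternatively, each constituent is a polynomial of degree at most $d$ taking integer values at infinitely many integers, so it has rational coefficients by interpolation. Also, for relatively open simplices the numerator has degree exactly $D(k+1)$, since $\sum_i w_i$ lies in the parallelepiped with $0<\lambda_i\le 1$. The partial-fraction expansion therefore has a constant polynomial part. This is harmless only because it affects $t=0\notin\Zp$.
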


Derksen and Weyman \cite{DerWey2002} proved that, in type $\An$, the stretched Clebsch--Gordan coefficients $t \mapsto c_{t\lambda, t\nu}^{t\mu}$ are polynomial functions of $t \in \Zp$. (That is, the minimum quasi-period of $\ehr{\P}(t)$ is $p = 1$ when $\P$ is a BZ~polytope of type~$\An$.%
\footnote{%
    In other words, the BZ polytopes in type $\An$ are \emph{pseudo-integral} (cf.~\cite{McAWil2026+}).%
})
King, Tollu, and Toumazet \cite{KTT2004} conjectured that all coefficients of these polynomials are nonnegative. This conjecture, if proved, would trivially imply the Saturation Theorem in type~$\An$. In \cite{DeLMcA2006}, the authors proved that, in the remaining classical types $\Bn, \Cn,$ and $ \Dn$, the stretched Clebsch--Gordan coefficients $t \mapsto c_{t \lambda, t\nu}^{t\mu}$ are quasipolynomials of minimum quasi-period at most $2$. On the basis of many computed examples, these authors extended the conjecture of King, Tollu, and Toumazet to all classical types:

\begin{conjecture}[\cite{DeLMcA2006}]\label[conj]{conj: ehrhart positivity}
    Let $\g$ be of type $\An$, $\Bn$, $\Cn$, or $\Dn$, and let $\lambda, \mu, \nu$ be dominant weights of $\g$. Then all coefficients of the quasipolynomial $t \mapsto c_{t \lambda, t\nu}^{t\mu}$ are nonnegative.
\end{conjecture}

\begin{sloppypar}
    With \cref{thm: explicit equations and inequalities for type G2} in hand, it is straightforward to compute many stretched Clebsch--Gordan coefficients in type~$\G$ using, for example, Barvinok's algorithm as implemented in LattE \cite{LattE2013}, which is accessible through passagemath \cite{passagemath}. In the course of such experiments, one quickly finds that \cref{conj: ehrhart positivity} \emph{can\-not} be extended to type $\G$:
\end{sloppypar}

\begin{example}\label{exm: counterexample to ehrhart positivity}
    Let $\g$ be of type~$\G$, and let $\lambda \deftobe (1, 0)$, $\mu \deftobe (0,1)$, and $\nu \deftobe (0,1)$ (written in the basis of fundamental weights). Let $\P$ be the BZ~polytope defined by the linear system in \cref{thm: explicit equations and inequalities for type G2}. Then
    \begin{equation*}
        c_{t\lambda, t\nu}^{t\mu}
            =
            \ehr{\P}(t)
            =
            \begin{cases*}
                \frac{1}{6}t + 1           & if $t \cong 0 \mod{6}$, \\[1ex]
                \frac{1}{6}t - \frac{1}{6} & if $t \cong 1 \mod{6}$, \\[1ex]
                \frac{1}{6}t + \frac{2}{3} & if $t \cong 2 \mod{6}$, \\[1ex]
                \frac{1}{6}t + \frac{1}{2} & if $t \cong 3 \mod{6}$, \\[1ex]
                \frac{1}{6}t + \frac{1}{3} & if $t \cong 4 \mod{6}$, \\[1ex]
                \frac{1}{6}t + \frac{1}{6} & if $t \cong 5 \mod{6}$.
            \end{cases*}
    \end{equation*}
    The negative coefficient appearing in the $t \cong 1 \mod 6$ case establishes that the analogue of \cref{conj: ehrhart positivity} for type $\G$ does not hold.
\end{example}

In \cref{sec: Quasi-periods of BZ polytopes of type G2} below, we prove that the minimum quasi-period $6$ appearing in \cref{exm: counterexample to ehrhart positivity} is the largest possible for BZ polytopes of type $\G$:
\begin{theorem}\label{thm: quasi-period at most 6}
    Let $\g$ be a type-$\G$ Lie algebra, and let $\lambda$, $\mu$, and $\nu$ be dominant weights of~$\g$. Then the minimum quasi-period of the stretched Clebsch--Gordan coefficient $t \mapsto c_{t\lambda, t\nu}^{t\mu}$ divides $6$.
\end{theorem}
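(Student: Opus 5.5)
The denominator bound of $180$ rules out a direct appeal to \cref{thm: ehrharts theorem}, so I plan to argue through vector partition functions rather than through the vertices of $\P$. Write the linear system of \cref{thm: explicit equations and inequalities for type G2} in standard form by introducing a nonnegative integer slack variable for each inequality. The system then reads $A x = b(\lambda,\mu,\nu)$ with $x \in \Znn^{N}$. Here $A$ is a fixed integer matrix, independent of the weights, and $b$ depends linearly and integrally on $(\lambda_{1},\lambda_{2},\mu_{1},\mu_{2},\nu_{1},\nu_{2})$. Then $\clnm = \phi_{A}(b(\lambda,\mu,\nu))$, where $\phi_{A}(b) = \card{\setof{x \in \Znn^{N} \st Ax = b}}$ is the vector partition function of $A$. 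Since $b$ is linear in the weights, $c_{t\lambda,t\nu}^{t\mu} = \phi_{A}(t\,b_{0})$ with $b_{0} = b(\lambda,\mu,\nu)$.

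Next I will invoke Sturmfels' structure theorem for vector partition functions. The cone $\operatorname{pos}(A)$ is subdivided into finitely many closed polyhedral chamber cones. On each closed chamber, $\phi_{A}$ agrees with a quasipolynomial. Its periodicity is governed by the lattices spanned by the bases (maximal linearly independent column subsets) of $A$, and in particular it is periodic modulo a lattice containing $D\,\Z A$, where $D$ is the lcm of the absolute values of the maximal minors of $A$ (after reducing to full row rank). The ray $\setof{t b_{0} \st t \in \Zp}$ lies entirely in one closed chamber, because chambers are cones. Restricting that chamber's quasipolynomial to the ray therefore gives a quasipolynomial in $t$. Its quasi-period divides any $D$ such that $\phi_{A}$ restricted to the chamber is a polynomial on each coset of $D\,\Z A$. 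Indeed, $t \cong t' \mod D$ implies $t b_{0} - t' b_{0} \in D\,\Z A$.

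So the heart of the proof is the computation that every maximal minor of $A$ divides $6$, or at least that the relevant lcm divides $6$. The entries of $A$ come from the coefficients in \cref{thm: explicit equations and inequalities for type G2}: the chain $6t_{2} \ge 2t_{3} \ge 3t_{4} \ge 2t_{5} \ge 6t_{6}$, the chain in item~(iii), and the $\pm1, \pm2, \pm3$ coefficients in item~(iv). Each basis of $A$ picks out a unimodular-looking triangular structure in which the only non-unit pivots are the coefficients $2$, $3$, and $6$ attached to single variables. I expect the minors to be $\pm 1, \pm 2, \pm 3, \pm 6$. The combinatorially large bases that produce the vertex denominators up to $180$ come from the mixed inequalities of item~(iv). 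I expect those to enter through sums of such pivot products, which need not yield large determinants even when the corresponding vertices have large denominators.

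This enumeration of maximal minors is the main obstacle. It is a finite but sizable check, which I will do by computer (for example in SageMath, enumerating all $m \times m$ minors of $A$ and taking their lcm). If some minor turns out not to divide $6$, I will try first to refine the argument. One option is a unimodular change of variables that clears the coefficients in the chain, for instance substituting $t_{3} = 3t_{2} - s$. Another is to use the finer statement that the period on a given chamber is determined only by the bases whose cones contain that chamber, and to check the lcm chamber by chamber.
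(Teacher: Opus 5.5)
There is a genuine gap: the computation you call the heart of the proof is guaranteed to fail. Take $A$ of full row rank and $b$ integral. Every vertex $x$ of $\{x \ge 0 \mid Ax = b\}$ is a basic solution, with $x_B = A_B^{-1}b$ and $x_N = 0$ for some nonsingular square submatrix $A_B$. By Cramer's rule $(\det A_B)\,x$ is integral, so $\den(x)$ divides $\det A_B$. Your slack variables are integral affine functions of $(t_1,\dots,t_6)$ with integral constant terms, so the vertices of the slack form have exactly the same denominators as the vertices of $\P$. By \cref{thm: possible denominators} and \cref{exm: denominator 180}, vertex denominators $4$, $5$ and $9$ actually occur. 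For instance, with $\lambda=\mu=(2,1)$ and $\nu=(1,1)$ there are vertices $(\tfrac45,\tfrac45,\tfrac{12}5,\tfrac85,\tfrac95,\tfrac35)$ and $(\tfrac53,\tfrac53,\tfrac53,\tfrac{10}9,\tfrac53,\tfrac29)$. Hence $A$ has maximal minors divisible by $4$, $5$ and $9$, and the lcm you plan to compute is a multiple of $180$. Your expectation that large-denominator vertices can come from bases with small determinants is exactly backwards.

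Neither fallback rescues the approach. A unimodular substitution such as $t_3 = 3t_2 - s$ preserves vertex denominators, so the same Cramer argument applies to the new matrix. The chamberwise refinement also fails. Suppose $b_0$ lies in a closed chamber $\Omega$ and fix a generic objective. An optimal basis for right-hand sides in the interior of $\Omega$ is dual feasible independently of $b$ and primal feasible on all of $\Omega$, so it is also optimal at $b_0$. Thus every vertex of the polytope at $b_0$ equals $A_B^{-1}b_0$ for some basis $B$ with $\Omega \subseteq \operatorname{pos}(A_B)$. Consequently the chamberwise period bound is still a multiple of $\den(\P)$, whether you take it as the lcm of these $\lvert \det A_B \rvert$ or as the least $p$ with $p\,b_0 \in \Z A_B$ for all such $B$. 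So for the BZ system, the determinantal bound from vector partition functions along a ray is never better than the Ehrhart denominator bound you rightly rejected at the outset; the collapse from $180$ to $6$ is invisible in this matrix.

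The missing idea is to count with a different matrix. The paper uses Steinberg's formula to write $c_{t\lambda,t\nu}^{t\mu}$ as an alternating sum of Kostant partition function values $p(t(w_1\lambda+w_2\nu-\mu)+(w_1\rho+w_2\rho-2\rho))$. These are values of the vector partition function of the positive-root matrix of $\mathrm{G}_2$, whose nonzero minors are only $\pm1,\pm2,\pm3$. Each term is, for $t \gg 0$, a quasipolynomial with period dividing $6$. Since $t \mapsto c_{t\lambda,t\nu}^{t\mu}$ is already a quasipolynomial for all $t \ge 1$ by \cref{thm: ehrharts theorem}, its minimum quasi-period divides $6$.
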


On the other hand, in \cref{sec: denominators of vertices}, we show that the denominators of the type-$\G$ BZ polytopes in \cref{thm: explicit equations and inequalities for type G2} are bounded above by $180$. In particular, we prove the following:
\begin{theorem}\label{thm: possible denominators - intro version}
    Let $\g$ be a type-$\G$ Lie algebra, and let $\lambda$, $\mu$, and $\nu$ be dominant weights of~$\g$. Then $\den(v) \in \setof{1,2,3,4,5,6,9}$ for every vertex $v$ of the BZ polytope defined in \cref{thm: explicit equations and inequalities for type G2}, and hence the denominator of this polytope divides $180$.
\end{theorem}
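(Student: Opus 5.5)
Since every vertex of the polytope $\P$ cut out by \cref{thm: explicit equations and inequalities for type G2} is the unique solution of a nonsingular linear system, the plan is to bound vertex denominators through Cramer's rule. First, I will use the two equalities in (ii) to eliminate $t_{5}$ and $t_{6}$, so that $\P$ sits in $\R^{4}$ with coordinates $(t_{1},\dotsc,t_{4})$. After elimination, every constraint in (i), (iii) and (iv) has the form $a\cdot t \ge b$, with $a$ an integer row vector drawn from a fixed finite list $A$ and $b$ an integer linear combination of $\lambda_{i},\mu_{i},\nu_{i}$. A vertex is then $v = M^{-1}b_{M}$ for some nonsingular $4\times 4$ submatrix $M$ of $A$. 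Because $b_{M}$ ranges over integer vectors, $\den(v)$ divides the largest elementary divisor of $M$ (the last invariant factor in its Smith normal form), and hence divides $\lvert\det M\rvert$.

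The key step is a finite computation over all $\binom{\lvert A\rvert}{4}$ choices of $M$. For each nonsingular $M$, I will compute the Smith normal form and record the largest invariant factor. The goal is to show that every such factor lies in $\setof{1,2,3,4,5,6,9}$. If some $M$ produces a larger factor, for instance $8$ or $10$, I will refine the bound. In that case I will use the fact that $b_{M}$ is not an arbitrary integer vector: it lies in the image of the integer map $(\lambda,\mu,\nu)\mapsto b$. I will also impose the requirement that $v$ actually satisfies the remaining inequalities for some dominant triple. Concretely, for each bad $M$, I will check whether $M^{-1}b_{M}$ can have that denominator while the other constraints stay feasible. This reduces to a lattice or congruence computation together with a small feasibility linear program over the triple $(\lambda,\mu,\nu)$.

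Once every vertex denominator is known to lie in $\setof{1,2,3,4,5,6,9}$, the denominator of $\P$ is the lcm of its vertex denominators. This lcm divides $\lcm(1,2,3,4,5,6,9)=180$, which gives the theorem. I would also confirm that each of the listed values actually occurs, using explicit examples computed with LattE or passagemath. The theorem does not require this, but it supports the sharpness claimed in the abstract.

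I expect the main obstacle to be the refinement step. A naive determinant bound will likely allow spurious denominators, such as factors of $2$ or $3$ from the coefficients $6$, $3$ and $2$ in (i) and (iii), which could combine into $8$, $12$ or $18$. Excluding these requires the careful feasibility analysis described above rather than a plain determinant computation. I would organize that analysis by first classifying the possible sets of tight constraints according to which inequalities in the chains (i) and (iii) are tight, and then handling the few problematic cases by hand.
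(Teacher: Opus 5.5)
Your plan is essentially the paper's proof. The paper also enumerates the bases $I$ (four tight inequalities together with the two equalities) and writes each candidate vertex as $X_I t$ with $t=(\lambda_1,\lambda_2,\mu_1,\mu_2,\nu_1,\nu_2)$. It then carries out your refinement step systematically: it computes the cone $K_I$ of parameters for which $X_I t$ is feasible, takes $d_I=\den(X_I Z_I)$ for a $\Z$-basis $Z_I$ of $\spn_{\R}(K_I)\cap\Z^{6}$, and runs the whole enumeration in SageMath.

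The point to get right in your refinement is this restriction to the linear span of the feasibility cone. A congruence check over $\Z^{6}$ plus a separate LP is not enough. For instance, the basis $3t_2=t_3$, $2t_3=3t_4$, $t_5-3t_6=\nu_1$, $t_3-3t_4+2t_5-3t_6=\nu_1$ gives $t_2=(3Q+\nu_1)/12$, where $Q$ is the right-hand side of the second equality in (ii), so denominator $12$ occurs for suitable nonnegative integer parameters. But feasibility (the first $\nu_1$-inequality in (iv) together with $t_1\ge 0$) forces $t_1=0$, i.e.\ $\lambda_1=\mu_1$. On that hyperplane every coordinate has denominator dividing $6$.
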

We prove this result in \cref{sec: denominators of vertices}, where we also provide an example (\cref{exm: denominator 180}) showing that the upper bound of $180$ on the denominators of these polytopes is sharp. Thus, type-$\G$ BZ polytopes exhibit \emph{quasi-period collapse} for some values of $\lambda, \mu, \nu$, as in the classical types \cite{DeLMcA2006}.

In order to prove \cref{thm: possible denominators - intro version}, we describe an algorithm to compute the denominators of the vertices of all polyhedra in a parametrized family $\setof{\PP_t \st t \in \Znn^{k}}$, where $\PP_{t}$ is of the form
\begin{equation*}
    \PP_{t}
        \deftobe
        \setof{
            x \in \R^{n}
            \st
            Ax \le Bt,\,
            Cx = Dt
        } 
        \qquad \text{for all $t \in \Z^{k}$ with $t \ge 0$}.
\end{equation*}
SageMath code implementing this algorithm and applying it to the type-$\G$ BZ~polytopes in \cref{thm: explicit equations and inequalities for type G2} appears in Appendix~\ref{appendix: denominators of vertices code}.

\subsubsection*{Acknowledgments} 
Computational experiments were performed using passagemath \cite{passagemath}, a fork of the SageMath computer algebra system. In particular, Ehrhart quasi\-poly\-nomials were computed using LattE \cite{LattE2013} as accessed through passagemath. ChatGPT-6.1~Sol was used to search the literature, to proofread the manuscript, to develop the algorithm in \cref{sec: denominators of vertices}, and to write the SageMath code implementing this algorithm in Appendix~\ref{appendix: denominators of vertices code}. Claude~Opus~5.5 was used to produce \TikZ{} code for diagrams. However, all text in this paper was composed and verified by the authors.

\section{Background}
\label{sec: background}

Throughout this paper,~$\g$ is a complex semisimple Lie algebra of rank $r$, $\h$~is a Cartan subalgebra of~$\g$ with dual space~$\hd$, $\roots$~is the root system of~$\g$ in~$\hd$, and $\posroots \subset \roots$~is the set of positive roots corresponding to a fixed Borel subalgebra of~$\g$ containing~$\h$. We begin by recalling standard terminology and notation for Lie algebras. Readers are referred to \cite{Hum1972, FH1991} for two excellent (but very different) introductions to this theory.

For each root $\alpha \in \Phi$, we write \(\g_\alpha\) for the corresponding root space and~$\co{\alpha}$ for the corresponding coroot, which is defined to be the unique element \({\co{\alpha}\in\liebracket{\g_\alpha}{\g_{-\alpha}}}\) such that \(\alpha(\co{\alpha})=2\).  Further let \(e_\alpha \in \g_\alpha\) and \(f_\alpha \in \g_{-\alpha}\) be such that \(\liebracket{e_\alpha}{f_\alpha}=\co{\alpha}\).

Fix an ordering \(\alpha_{1}, \dotsc, \alpha_{r}\) of the simple roots. We write \(e_i \deftobe e_{\alpha_{i}} \) and \(f_i \deftobe f_{\alpha_{i}}\).  The \defing{Chevalley generators} are the elements \(e_i\), \(\co{\alpha_{i}}\), and~\(f_i\) for $i \in [r]$.
The \defing{Cartan matrix} is \(A=(a_{i,j})\), where \(a_{i,j}=\alpha_j(\alpha_i^\vee)\). The (integral) \defing{weights} of $\g$ are the elements \(\gamma \in \hd\) such that \(\gamma(\co{\alpha_i}) \in \Z\) for all $i \in [r]$. The weights constitute the \defing{weight lattice} $\weights$. The \defing{fundamental weights} $\omega_{1}, \dotsc, \omega_{r}$ are the basis of $\weights$ defined by \(\omega_j(\co{\alpha_i})=\delta_{i,j}\) for all $i, j \in [r]$, where \(\delta_{i,j}\) is the Kronecker delta. 
The \defing{Weyl cone} $\Wc$ is the nonnegative span of the fundamental weights. The \defing{dominant weights} are the weights in \(\Wc\). Indeed, the dominant weights are precisely the nonnegative \emph{integer} linear combinations of the fundamental weights.

The \defing{Langlands dual} of \(\g\) is the  Lie algebra \(\ld{\g}\) whose Cartan matrix is the transpose~\(\trans{A}\) of the Cartan matrix of $\g$.  This allows us to identify a Cartan subalgebra of \(\ld{\g}\) with \(\hd\), and the simple roots and coroots of \(\ld{\g}\) with the simple coroots and roots of \(\g\), respectively, under the same ordering. In particular, the fundamental weights of \(\ld{\g}\) are the fundamental coweights of~$\g$, defined by \(\alpha_j(\omega_i^\vee)=\delta_{i,j}\) for all \(i, j \in [r]\).

The \defing{Weyl group} of \(\g\) is the group \(\Wg\) generated by the set \(\setof{s_{i} \st i \in [r]}\) of simple reflections of \(\hd\), where \({s_i(\gamma)=\gamma-\gamma(\alpha_i^\vee)\alpha_i}\) for all \(\gamma \in \hd\). The action of $\Wg$ on $\h$ via \(s_i(h) \deftobe h-\alpha_i(h)\alpha_i^\vee\) for all \(h \in \h\) naturally identifies~$\Wg$ with the Weyl group of $\ld{\g}$.

Every finite-dimensional \g-module \(V\) has a \defing{weight-space de\-com\-po\-si\-tion}
\begin{equation*}
    V = \bigoplus_{\beta \in \weights} V(\beta),
\end{equation*}
in which \(V(\beta) \deftobe \setof{v \in V \st \text{$h \cdot v = \beta(h)v$ for all $h \in \h$} } \) is a \defing{weight space}.  Let \(W(V)\) be the set of weights of $V$, \emph{i.e.}, the set of weights \(\beta \in \weights\) such that \({V(\beta) \neq \{0\}}\). Up to isomorphism, the irreducible finite-dimensional \g-modules are indexed by the dominant weights $\lambda \in \Wc \cap \weights$, with~$V_{\lambda}$ being the unique such $\g$-module with highest weight $\lambda$. The corresponding \defing{weight polytope} is $P_{\lambda} \deftobe \conv\setof{w(\lambda) \st w \in \Wg}$ (where the convex hull is taken in the real span of~$\roots$), and $W(V_{\lambda})$ is the set of weights in $P_{\lambda}$ that are congruent to~$\lambda$ modulo the root lattice.

For all \(w \in \Wg\), a \defing{word} for \(w\) of length \(\ell\) is a sequence \((i_1, \dotsc, i_\ell) \in [r]^{\ell}\) such that \(w=s_{i_1}\cdots s_{i_\ell}\). The \defing{length} \(\ell(w)\) of \(w\) is the minimum length among all words for \(w\), and a \defing{reduced} word for $w$ is a word of length \(\ell(w)\). We write \(R(w)\) for the set of all reduced words for \(w\). Let \(w_0 \in \Wg\) be the unique element of $\Wg$ that is of maximum length among all elements of \(\Wg \).

Every triple $(\lambda, \mu, \nu)$ of dominant weights of $\g$ determines a tensor-product multiplicity (also called a \defing{Clebsch--Gordan coefficient}) \(\clnm\), which is the multiplicity of \(V_\mu\) in the tensor product \(V_\lambda \otimes V_\nu\). That is, for all $\lambda, \nu \in \Wc \cap \weights$,
\begin{equation*}
    V_\lambda \otimes V_\nu 
        \isom
        \bigoplus_{\mu\in\Wc \cap \weights} V_{\mu}^{\oplus \clnm} .
\end{equation*}
Berenstein and Zelevinsky define two families of rational polytopes in $\R^{\ell(w_{0})}$, both of which contain a number of integer-lattice points equal to the multiplicity \(\clnm\) of \(V_\mu\) in \(V_\lambda \otimes V_\nu\) (\cite[Section~2.2]{BerZel2001}; see also \cref{defn: c-vec polytope,defn: d-vec polytope} below).  These polytopes are presented in terms of certain so-called \emph{\(\i\)-trails} and their corresponding \emph{c-} and \emph{d-vectors}, which we now define.

\begin{definition} \label{defn: potential i-trail}
    Let \(V\) be a finite-dimensional \g-module. Fix $\ell \in \Zp$ and \begin{equation*}
        {\i = (i_1,\dotsc, i_\ell) \in [r]^{\ell}}.
    \end{equation*} 
    A \defing{potential $\i$-trail} in $V$ is a sequence \(\pi = (\gamma_0,\gamma_1,\cdots, \gamma_\ell)\) of weights in \(W(V)\) such that, for all \(k \in [\ell]\), there exists a nonnegative integer \(c_k\) such that \(\gamma_{k-1}-\gamma_k = c_k \alpha_{i_k}\). In this case we say that $\pi$ is a potential $\i$-trail \defing{from~$\gamma_{0}$ to~$\gamma_{\ell}$}, and that the \defing{c-vector} of $\pi$ is $c(\pi) \deftobe (c_{1}, \dotsc, c_{\ell})$.
\end{definition} 

Because of the possibility of zero entries in the c-vector, a potential $\i$-trail may contain a \defing{pause}, \emph{i.e.}, a weight that appears in two or more consecutive entries of the potential $\i$-trail.

\begin{example}\label{i-trail example}
    Let $\g$ be of type $\G$, and order the simple roots of $\g$ according to the Bourbaki convention, so that $\alpha_{1}$ is the short simple root of $\g$ and $\alpha_{2}$ is the long simple root of $\g$. Thus, $\alpha_{1}^{\vee}$ is the \emph{long} simple root of $\ld\g$ and $\alpha_{2}^{\vee}$ is the \emph{short} simple root of $\ld\g$. Let $\i = (1,2,1,2,1,2)$ and let \(V \deftobe V_{\omega_{1}^{\vee}}\) be the adjoint representation of $\ld\g$. Label the weights of $V$ as follows:
    \begin{center}
        \begin{tikzpicture}[scale=0.7]
            \begin{scope}[x=(30:1)]
                \draw ( 0,  0) node[graypoint]{}
                    node[below left] {$0$};
                \draw ( 2, -1) node[graypoint]{}
                    node[below left] {$f$};
                \draw (-3,  3) node[graypoint]{}
                    node[below left] {\(e\)};
                \draw (-1,  2) node[graypoint]{}
                    node[below left] {\(d\)};
                \draw ( 1,  1) node[graypoint]{}
                    node[below left] {\(c\)};
                \draw ( 3,  0) node[graypoint]{}
                    node[below left] {\(b\)};
                \draw ( 0,  3) node[graypoint]{}
                    node[below left] {\(a\)};
                \draw (-2,  1) node[graypoint]{}
                    node[below left] {\(g\)};
                \draw ( 3, -3) node[graypoint]{}
                    node[below left] {\(h\)};
                \draw ( 1, -2) node[graypoint]{}
                    node[below left] {\(i\)};
                \draw (-1, -1) node[graypoint]{}
                    node[below left] {\(j\)};
                \draw (-3,  0) node[graypoint]{}
                    node[below left] {\(k\)};
                \draw ( 0, -3) node[graypoint]{}
                    node[below left] {\(\ell\)};
            \end{scope}
        \end{tikzpicture}
    \end{center}
    In particular, $a = \omega_{1}^{\vee}$, $e = \alpha_{1}^{\vee}$, and $f = \alpha_{2}^{\vee}$. See \cref{fig: four example i-trails} for four examples of potential $\i$-trails $\pi_{1}, \pi_{2}, \pi_{3}, \pi_{4}$ from $a$ to $k$.
    \begin{figure}
        \begin{subfigure}{0.48\textwidth}
            \centering
            \begin{tikzpicture}[scale=0.7]
                \begin{scope}[x=(30:1)]
                    \draw ( 0,  0) node[graypoint]{}
                        node[below left] {$0$};
                    \draw ( 2, -1) node[graypoint]{}
                        node[below left] {$f$};
                    \draw (-3,  3) node[redpoint]{}
                        node[below left] {\(e\)};
                    \draw (-1,  2) node[redpoint]{}
                        node[below left] {\(d\)};
                    \draw[red] (-1,2) circle (.3cm);
                    \draw ( 1,  1) node[graypoint]{}
                        node[below left] {\(c\)};
                    \draw ( 3,  0) node[redpoint]{}
                        node[below left] {\(b\)};
                    \draw ( 0,  3) node[redpoint]{}
                        node[below left] {\(a\)};
                    \draw (-2,  1) node[graypoint]{}
                        node[below left] {\(g\)};
                    \draw ( 3, -3) node[redpoint]{}
                        node[below left] {\(h\)};
                    \draw ( 1, -2) node[graypoint]{}
                        node[below left] {\(i\)};
                    \draw (-1, -1) node[graypoint]{}
                        node[below left] {\(j\)};
                    \draw (-3,  0) node[redpoint]{}
                        node[below left] {\(k\)};
                    \draw ( 0, -3) node[graypoint]{}
                        node[below left] {\(\ell\)};
                    \draw[red,thick] (0,3)--(3,0)--(-3,3)--(3,-3)--(-3,0);
                \end{scope}
            \end{tikzpicture}
            $\pi_1 \deftobe (a,b,d,d,e,h,k)$ \\
            $c(\pi_{1}) = (1,2,0,1,2,3)$ \\
            path of $\pi_{1}$: $(a,b,e,h,k)$
        \end{subfigure}\hfill
        \begin{subfigure}{0.48\textwidth}
            \centering
            \begin{tikzpicture}[scale=0.7]
                \begin{scope}[x=(30:1)]
                    \draw ( 0,  0) node[graypoint]{}
                        node[below left] {$0$};
                    \draw ( 2, -1) node[graypoint]{}
                        node[below left] {$f$};
                    \draw (-3,  3) node[redpoint]{}
                        node[below left] {\(e\)};
                    \draw (-1,  2) node[graypoint]{}
                        node[below left] {\(d\)};
                    \draw ( 1,  1) node[graypoint]{}
                        node[below left] {\(c\)};
                    \draw ( 3,  0) node[redpoint]{}
                        node[below left] {\(b\)};
                    \draw ( 0,  3) node[redpoint]{}
                        node[below left] {\(a\)};
                    \draw (-2,  1) node[graypoint]{}
                        node[below left] {\(g\)};
                    \draw ( 3, -3) node[redpoint]{}
                        node[below left] {\(h\)};
                    \draw ( 1, -2) node[redpoint]{}
                        node[below left] {\(i\)};
                    \draw[red] (1,-2) circle (.3cm);
                    \draw (-1, -1) node[graypoint]{}
                        node[below left] {\(j\)};
                    \draw (-3,  0) node[redpoint]{}
                        node[below left] {\(k\)};
                    \draw ( 0, -3) node[graypoint]{}
                        node[below left] {\(\ell\)};
                    \draw[red,thick] (0,3)--(3,0)--(-3,3)--(3,-3)--(-3,0);
                \end{scope}
            \end{tikzpicture}
            $\pi_2 \deftobe (a,b,e,h,i,i,k)$ \\
            $c(\pi_{2}) = (1,3,2,1,0,2)$ \\
            path of $\pi_{2}$: $(a, b, e, h, k)$
        \end{subfigure}

        \bigskip
        \bigskip

        \begin{subfigure}{0.48\textwidth}
            \centering
            \begin{tikzpicture}[scale=0.7]
                \begin{scope}[x=(30:1)]
                    \draw ( 0,  0) node[graypoint]{}
                        node[below left] {$0$};
                    \draw ( 2, -1) node[redpoint]{}
                        node[below left] {$f$};
                    \draw (-3,  3) node[graypoint]{}
                        node[below left] {\(e\)};
                    \draw (-1,  2) node[redpoint]{}
                        node[below left] {\(d\)};
                    \draw ( 1,  1) node[graypoint]{}
                        node[below left] {\(c\)};
                    \draw ( 3,  0) node[redpoint]{}
                        node[below left] {\(b\)};
                    \draw ( 0,  3) node[redpoint]{}
                        node[below left] {\(a\)};
                    \draw (-2,  1) node[redpoint]{}
                        node[below left] {\(g\)};
                    \draw ( 3, -3) node[graypoint]{}
                        node[below left] {\(h\)};
                    \draw ( 1, -2) node[redpoint]{}
                        node[below left] {\(i\)};
                    \draw (-1, -1) node[graypoint]{}
                        node[below left] {\(j\)};
                    \draw (-3,  0) node[redpoint]{}
                        node[below left] {\(k\)};
                    \draw ( 0, -3) node[graypoint]{}
                        node[below left] {\(\ell\)};
                    \draw[red,thick] (0,3)--(3,0)--(-1,2)--(2,-1)--(-2,1)--(1,-2)--(-3,0);
                \end{scope}
            \end{tikzpicture}
            $\pi_3 \deftobe (a,b,d,f,g,i,k)$ \\
            $c(\pi_{3}) = (1,2,1,2,1,2)$ \\
            path of $\pi_{3}$: $(a,b,d,f,g,i,k)$
        \end{subfigure}\hfill
        \begin{subfigure}{0.48\textwidth}
            \centering
            \begin{tikzpicture}[scale=0.7]
                \begin{scope}[x=(30:1)]
                    \draw ( 0,  0) node[redpoint]{}
                        node[below left] {$0$};
                    \draw ( 2, -1) node[redpoint]{}
                        node[below left] {$f$};
                    \draw (-3,  3) node[graypoint]{}
                        node[below left] {\(e\)};
                    \draw (-1,  2) node[redpoint]{}
                        node[below left] {\(d\)};
                    \draw ( 1,  1) node[graypoint]{}
                        node[below left] {\(c\)};
                    \draw ( 3,  0) node[redpoint]{}
                        node[below left] {\(b\)};
                    \draw ( 0,  3) node[redpoint]{}
                        node[below left] {\(a\)};
                    \draw (-2,  1) node[graypoint]{}
                        node[below left] {\(g\)};
                    \draw ( 3, -3) node[redpoint]{}
                        node[below left] {\(h\)};
                    \draw ( 1, -2) node[graypoint]{}
                        node[below left] {\(i\)};
                    \draw (-1, -1) node[graypoint]{}
                        node[below left] {\(j\)};
                    \draw (-3,  0) node[redpoint]{}
                        node[below left] {\(k\)};
                    \draw ( 0, -3) node[graypoint]{}
                        node[below left] {\(\ell\)};
                    \draw[red,thick] (0,3)--(3,0)--(-1,2)--(2,-1)--(0,0)--(3,-3)--(-3,0);
                \end{scope}
            \end{tikzpicture}
            $\pi_4 \deftobe (a,b,d,f,0,h,k)$ \\
            $c(\pi_{4}) = (1,2,1,1,1,3)$ \\
            path of $\pi_{4}$: $(a,b,d,f,0,h,k)$
        \label{fig:d}
        \end{subfigure}
        \caption{%
            Four potential $\i$-trails $\pi_{1}, \pi_{2}, \pi_{3}, \pi_{4}$ in the adjoint representation in \cref{i-trail example}, with $\i = (1,2,1,2,1,2)$. The red dots indicate the weights that appear in the potential $\i$-trail. Circled weights indicate where the potential $\i$-trail pauses.
        }
        \label{fig: four example i-trails}
    \end{figure}
    Observe that the potential $\i$\nobreakdash-trails~$\pi_1$ and $\pi_2$ each contain a pause. We have indicated these pauses by circling the repeated weight.  
\end{example}

The potential $\i$-trails $\pi_{1}$ and $\pi_{2}$ in \cref{fig: four example i-trails} are different, but the ``paths traced'' by these potential $\i$-trails are the same. Roughly speaking, we define the \emph{path} of a potential $\i$-trail ${\pi = (\gamma_{0}, \gamma_{1}, \dotsc, \gamma_{\ell})}$ to be the unique subsequence of weights in $\pi$ that begins with $\gamma_{0}$, ends with $\gamma_{\ell}$, and includes only the ``intermediate'' weights where $\pi$ ``changes direction'', with each weight in the path appearing only once. We formalize this notion of ``path'' in \cref{defn:path of an i-trail} below.

\begin{definition}\label{defn:path of an i-trail}
    Let $\i = (i_{1}, \dotsc, i_{\ell}) \in [r]^{\ell}$ and let \(\pi = (\gamma_{0}, \dotsc, \gamma_{\ell})\) be a potential $\i$-trail, with c-vector $(c_{1}, \dotsc, c_{\ell})$. 
    Let $I \deftobe \setof{k \in [\ell] \st c_{k} = 0}$ be the set of indices where the c-vector is zero. 
    Let
    $c' = (c'_{1}, \dotsc, c'_{\ell'})$, 
    respectively $\i' = (i'_{1}, \dotsc, i'_{\ell'})$, 
    be the result of removing the entries with indices in $I$ from~$c$, respectively~$\i$. 
    Partition $c'$ into subwords of maximal length subject to the condition that, in the $j$-th subword $(c'_{k_{j}}, \dotsc, c'_{k_{j+1} - 1})$ (beginning with $j = 1$ and setting $k_{1} = 1$ and $k_{\ell'' + 1} = \ell' + 1$), the corresponding entries of $\i'$ are equal: $i'_{k_{j}} = \dotsb = i'_{k_{j+1} - 1}$, and let $\ell''$ be the number of such subwords. Let $c'' = (c''_{1}, \dotsc, c''_{\ell''})$ be the result of replacing each such subword of $c'$ by its sum:
    \begin{equation*}
        c''_{j} \deftobe \sum_{s = k_{j}}^{k_{j+1} - 1} c'_{s},
        \qquad
        \text{for $1 \le j \le \ell''$},
    \end{equation*}
    and let $\i'' \deftobe(i''_{1}, \dotsc, i''_{\ell''})$ with $i''_{j} \deftobe i'_{k_{j}}$ for $1 \le j \le \ell''$.
    Finally, define the \defing{path} $(\delta_{0}, \delta_{1}, \dotsc, \delta_{\ell''})$ of $\pi$ by setting $\delta_{0} = \gamma_{0}$ and $\delta_{j-1} - \delta_{j} = c''_{j} \alpha_{i''_{j}}$ for $1 \le j \le \ell''$.
\end{definition}

\begin{definition}
    We call two potential $\i$-trails $\pi$ and $\pi'$ \defing{path equivalent} if $\pi$ and $\pi'$ have the same path.
\end{definition}

\begin{example}
    See again \cref{fig: four example i-trails} for the paths of the potential $\i$-trails $\pi_{1}, \pi_{2}, \pi_{3}, \pi_{4}$ from \cref{i-trail example}. The potential $\i$-trails $\pi_{1}$ and $\pi_{2}$ are path equivalent.
\end{example}

The conditions defining \emph{potential} $\i$-trails are purely combinatorial. In order to distinguish the ``actual'' $\i$-trails from the merely potential $\i$-trails, Berenstein and Zelevinsky impose an additional non-combinatorial condition that we call the \emph{composition rule}.

\begin{definition} \label{defn: i-trail}
    Let \(V\) be a finite-dimensional \g-module, let $\gamma, \delta \in W(V)$, and let $\i = (i_{1}, \dotsc, i_{\ell}) \in [r]^{\ell}$. An \defing{$\i$-trail from $\gamma$ to $\delta$ in $V$} is a potential $\i$-trail $\pi$ from $\gamma$ to $\delta$ in $V$ such that the \defing{path operator} \[p_\pi = e^{c_1}_{i_1}\cdots e^{c_\ell}_{i_\ell}\] is a non-zero linear map from \(V(\delta) \) to \(V(\gamma)\), where $(c_{1}, \dotsc, c_{\ell})$ is the c\nobreakdash-vector of $\pi$.  
    We call this non-zero requirement on the path operator the \defing{composition rule}.
\end{definition}

The path operator of a potential $\i$-trail is determined by its path. In particular, whether a potential $\i$-trail is an (actual) $\i$-trail depends only on its path.

\begin{lemma}\label{path operator lemma}
    If $\pi$ and $\pi'$ are path-equivalent potential $\i$-trails, then $p_{\pi} = p_{\pi'}$. Thus, if $\pi$ is moreover an $\i$-trail, then so is $\pi'$.
\end{lemma}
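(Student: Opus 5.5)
The plan is to show that the path operator $p_{\pi}$ is computed entirely from the data $(\i'', c'')$ of \cref{defn:path of an i-trail}. Since the path of $\pi$ is, by definition, determined by $\gamma_{0}$ together with $(\i'', c'')$, the first claim of the lemma then follows. I will also check that the path determines $(\i'', c'')$ conversely, so that path-equivalent trails genuinely share this data.

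\textbf{Step 1: reduce to the compressed data.} First I remove the zero entries. Each factor $e_{i_{k}}^{c_{k}}$ with $c_{k} = 0$ is the identity operator, so deleting the indices in $I$ does not change the product. This gives
\[
p_{\pi} = e_{i'_{1}}^{c'_{1}} \cdots e_{i'_{\ell'}}^{c'_{\ell'}}.
\]
Next I merge consecutive repeated indices. Within each maximal block of $\i'$ with constant index $i$, powers of the single operator $e_{i}$ commute and add exponents:
\[
e_{i}^{a}e_{i}^{b} = e_{i}^{a+b}.
\]
Hence
\[
p_{\pi} = e_{i''_{1}}^{c''_{1}} \cdots e_{i''_{\ell''}}^{c''_{\ell''}},
\]
which depends only on $(\i'', c'')$.

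\textbf{Step 2: recover $(\i'', c'')$ from the path.} I must check that the path $(\delta_{0}, \dotsc, \delta_{\ell''})$ determines $(\i'', c'')$, and not merely the reverse. The argument has three parts.
\begin{enumerate}
    \item Every $c''_{j}$ is a sum of positive integers, so $c''_{j} > 0$.
    \item Consecutive entries of $\i''$ are distinct, because the blocks were chosen maximal.
    \item Each difference $\delta_{j-1} - \delta_{j} = c''_{j}\alpha_{i''_{j}}$ is a nonzero positive multiple of a simple root. The simple roots are linearly independent, so this difference determines $i''_{j}$ and $c''_{j}$ uniquely.
\end{enumerate}
Here $\ell''$ is simply the length of the path minus one. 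Therefore path-equivalent trails $\pi$ and $\pi'$ have identical $(\i'', c'')$, and by Step 1 they have $p_{\pi} = p_{\pi'}$.

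\textbf{Step 3: the second claim.} Path-equivalent trails have the same endpoints $\gamma_{0} = \delta_{0}$ and $\gamma_{\ell} = \delta_{\ell''}$. So $p_{\pi}$ and $p_{\pi'}$ are the same map $V(\delta) \to V(\gamma)$. Consequently $p_{\pi}$ is nonzero exactly when $p_{\pi'}$ is. Both are potential $\i$-trails by hypothesis, so if $\pi$ satisfies the composition rule then $\pi'$ is an $\i$-trail.

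The only point needing care is Step 2. One must use the maximality of the blocks and the positivity of the merged exponents to guarantee that the path encodes the compressed word unambiguously. The remaining steps are routine.
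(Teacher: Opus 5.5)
Your proof is correct and follows the reasoning the paper intends; the paper gives no separate proof, only the remark that the path operator is determined by the path. As you show, the factors $e_{i_k}^{0}$ are the identity and adjacent powers of the same $e_{i}$ merge, so $p_{\pi} = e_{i''_{1}}^{c''_{1}} \cdots e_{i''_{\ell''}}^{c''_{\ell''}}$. Your Step~2 makes explicit a point the paper leaves implicit, though there you need only the positivity of the $c''_{j}$ and the linear independence of the simple roots, not the maximality of the blocks.
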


\begin{example}
    In \cref{fig: four example i-trails}, we can write the path operators as follows, confirming that \(\pi_1\) and \(\pi_2\) have the same path operator:
    \begin{align*}
        p_{\pi_1} 
            &=
            e_{1}^{1} e_{2}^{2} e_{1}^{0} e_{2}^{1} e_{1}^{2} e_{2}^{3}
            =
            e_{1}^{1} e_{2}^{3} e_{1}^{2} e_{2}^{3}, \\
        p_{\pi_2} 
            & =
            e_{1}^{1} e_{2}^{3} e_{1}^{2} e_{2}^{1} e_{1}^{0} e_{2}^{2}
            =
            e_{1}^{1} e_{2}^{3} e_{1}^{2} e_{2}^{3}, \\
        p_{\pi_3} 
            & =
            e_{1}^{1} e_{2}^{2} e_{1}^{1} e_{2}^{2} e_{1}^{1} e_{2}^{2}, \\
        p_{\pi_4} 
            & =
            e_{1}^{1} e_{2}^{2} e_{1}^{1} e_{2}^{1} e_{1}^{1} e_{2}^{1}.
    \end{align*}
\end{example}

Note that the c-vector $c(\pi) = \bigparens{c_{1}(\pi), \dotsc, c_{\ell}(\pi)}$ of a potential \(\i\)-trail \(\pi\) may also be written as follows:
\begin{equation*}
    c_k(\pi)=\frac{\gamma_{k-1} - \gamma_k}{2}(\alpha^{\vee}_{i_k}).
\end{equation*}
Continuing to follow \cite{BerZel2001}, we define the d-vector of $\pi$ as follows:
\begin{definition}
    Let $\i = (i_{1}, \dotsc, i_{\ell}) \in [r]^{\ell}$ and let $\pi$ be a potential $\i$-trail. The \textbf{d-vector} \(d(\pi)=\bigparens{d_1(\pi), \dotsc, d_\ell(\pi)}\) of \(\pi\) is defined by 
    \begin{equation*}
        d_k(\pi)=\frac{\gamma_{k-1} + \gamma_k}{2}(\alpha^{\vee}_{i_k}).
    \end{equation*}
\end{definition}

In \cite[Section 2.3]{BerZel2001}, Berenstein and Zelevinsky use the~c-~and d-vectors of $\i$-trails to define two families of polytopes, each of which they show contains a number of lattice points equal to the Clebsch--Gordan coefficient $\clnm$ corresponding to a given triple of dominant weights of a semisimple Lie algebra. We reproduce the definitions of these polytopes here.

\begin{definition}\label{defn: c-vec polytope}
    Fix a reduced word \( \i = (i_1, \dotsc, i_\ell) \in R(w_0)\).  Let \(\lambda, \mu, \nu\) be three dominant weights of the semisimple Lie algebra~\(\mathfrak{g}\). 
    The polytope $\bzlmnc$ is the set of all solutions $(t_{1}, \dotsc, t_{\ell}) \in \R^{\ell}$ to the following system of linear equations and inequalities:
    \begin{defnconditions}
    	\item
    	\(t_k \geq 0\) for all \(k\in [\ell]\),
    
    	\item
    	\(\sum_{k=1}^\ell t_ks_{i_1}\cdots s_{i_{k-1}}\alpha_{i_k}=\lambda+\nu-\mu\),
    
    	\item
    	\(\sum_{k=1}^{\ell} c_k(\pi)t_k\geq (s_i \lambda+\nu-\mu)(\omega_i^\vee)\) for all $i \in [r]$ and $\i$-trails \(\pi\) from \(s_i\omega_i^\vee\) to \(w_0\omega_i^\vee\) in~\(V_{\omega_i^\vee}\),
    
    	\item
    	\(\sum_{k=1}^{\ell}c_k(\pi)t_k\geq (\lambda+ s_i \nu-\mu)(\omega_i^\vee)\) for all $i \in [r]$ and $\i$-trails \(\pi\) from \(\omega_i^\vee\) to \(w_0s_i\omega_i^\vee\) in~\(V_{\omega_i^\vee}\).
    \end{defnconditions}
\end{definition}

\begin{definition}\label{defn: d-vec polytope}
    Fix a reduced word \(\i = (i_1, \dotsc, i_\ell) \in R(w_0)\). Let \(\lambda, \mu, \nu\) be three dominant weights of the semisimple Lie algebra~\(\mathfrak{g}\). 
    The polytope $\bzlmnd$ is the set of all solutions $(t_{1}, \dotsc, t_{\ell}) \in \R^{\ell}$ to the following system of linear equations and inequalities:
    \begin{defnconditions}
        \item 
        \(\sum_{k=1}^{\ell} d_k(\pi) t_k \geq 0\) for all $i \in [r]$ and $\i$-trails \(\pi\) from \(\omega_i^\vee\) to \(w_0s_i\omega_i^\vee\) in \(V_{\omega_i^\vee}\),
        
        \item
        \(\sum_{k=1}^{\ell} t_k\alpha_{i_k}=\lambda+\nu-\mu\),
        
        \item \(\sum_{k=1}^{\ell} d_k(\pi)t_k\geq-\lambda(\alpha_i^\vee)\) for all $i \in [r]$ and $\i$-trails \(\pi\) from \(s_i\omega_i^\vee\) to \(w_0\omega_i^\vee\) in \(V_{\omega_i^\vee}\),
        
        \item
        \(t_k + \sum_{l=k+1}^{\ell}a_{i_k,i_l}t_l\leq \nu(\alpha^\vee_{i_k})\) for all $k \in [\ell]$.
    \end{defnconditions}
\end{definition}

\begin{theorem}\label{thm: bz-polytopes}\cite[Theorems~2.3 and~2.4]{BerZel2001}
	Let \(\lambda, \mu, \nu\) be dominant weights of the semisimple Lie algebra~\(\mathfrak{g}\). Then, for every choice of reduced word $\i = (i_{1}, \dotsc, i_{\ell}) \in R(w_{0})$, the number of integer-lattice points in $\bzlmnc$, respectively $\bzlmnd$, is exactly the tensor-product multiplicity~$\clnm$:
    \begin{equation*}
        \card{\bzlmnc \cap \Z^{\ell}} 
            = 
            \clnm
            = 
            \card{\bzlmnd \cap \Z^{\ell}}.
    \end{equation*}
\end{theorem}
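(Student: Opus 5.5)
The plan is to follow the crystal-basis route of Berenstein and Zelevinsky: reduce $\clnm$ to counting crystal elements of a prescribed weight, parametrize those elements by integer points via the string parametrization attached to $\i$, and then identify the resulting inequalities with those in \cref{defn: d-vec polytope}. The c-vector polytope will then be obtained by a unimodular change of coordinates.

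\emph{Step 1 (combinatorial tensor-product rule).} I would invoke the crystal version of the Littlewood--Richardson rule (Kashiwara; equivalently Littelmann's path model). Let $B(\nu)$ be the crystal of $V_{\nu}$. Then $\clnm$ equals the number of $b\in B(\nu)$ with $\mathrm{wt}(b)=\mu-\lambda$ and $\varepsilon_i(b)\le \lambda(\co{\alpha_i})$ for all $i\in[r]$. Up to the symmetry $V_\lambda\otimes V_\nu\isom V_\nu\otimes V_\lambda$ and a possible duality $b\mapsto b^{\ast}$, this is the form of the rule whose conditions match (ii)--(iv).

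\emph{Step 2 (string parametrization).} For the reduced word $\i$, send each $b\in B(\infty)$ to its string $(t_1,\dotsc,t_\ell)$, where $t_k$ is the number of times $\tilde e_{i_k}$ can be applied after the earlier steps. The key input is Berenstein--Zelevinsky's description of the string cone: the image is the set of lattice points satisfying $\sum_k d_k(\pi)t_k\ge 0$ for all $\i$-trails $\pi$ from $\omega_i^\vee$ to $w_0s_i\omega_i^\vee$ in $V_{\omega_i^\vee}$. These are the representations of the Langlands dual, and this gives condition (i).

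I would establish this description by realizing strings as tropicalizations of the twisted generalized minors $\Delta_{\omega_i^\vee,w_0s_i\omega_i^\vee}$, evaluated on the factorization $x_{-i_1}(t_1)\cdots x_{-i_\ell}(t_\ell)$. Expanding a minor as a matrix coefficient of $V_{\omega_i^\vee}$ produces one monomial for each product $e_{i_1}^{c_1}\cdots e_{i_\ell}^{c_\ell}$ that is nonzero from $V(\delta)$ to $V(\gamma)$. These are exactly the $\i$-trails, which is where the composition rule of \cref{defn: i-trail} enters.

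\emph{Step 3 (remaining conditions).}
\begin{itemize}
\item The subcrystal $B(\nu)\subset B(\infty)$ is cut out by the string inequalities $t_k+\sum_{l>k}a_{i_k,i_l}t_l\le \nu(\co{\alpha_{i_k}})$, which is condition (iv).
\item The weight condition $\mathrm{wt}(b)=\mu-\lambda$, measured from the lowest weight $w_0\nu$, becomes the linear equation (ii).
\item The bound $\varepsilon_i(b)\le\lambda(\co{\alpha_i})$ is expressed through the same tropical calculus. Here $\varepsilon_i$ is the tropicalization of the minor $\Delta_{s_i\omega_i^\vee,w_0\omega_i^\vee}$, so it equals a maximum over $\i$-trails from $s_i\omega_i^\vee$ to $w_0\omega_i^\vee$, shifted by a linear term. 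This yields (iii).
\end{itemize}
Steps 1--3 together give $\card{\bzlmnd\cap\Z^\ell}=\clnm$.

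\emph{Step 4 (c-vector polytope).} To pass to $\bzlmnc$, I would use the change of variables relating the string ($d$-) coordinates to Lusztig-type ($c$-) coordinates. This is the triangular integer map built from $s_{i_1}\cdots s_{i_{k-1}}\alpha_{i_k}$, and it is unimodular, so it preserves lattice-point counts. Under it, each $d_k(\pi)$-inequality becomes the corresponding $c_k(\pi)$-inequality with right-hand side $(s_i\lambda+\nu-\mu)(\omega_i^\vee)$ or $(\lambda+s_i\nu-\mu)(\omega_i^\vee)$. Verifying this is linear bookkeeping, using $\gamma_{k-1}-\gamma_k=c_k\alpha_{i_k}$.

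The main obstacle is Step 2: proving that the string cone, and the function $\varepsilon_i$, are governed precisely by $\i$-trails. This requires the positivity and total-positivity machinery, namely the tropicalization of generalized minors, and a careful argument that no cancellation occurs among the monomials indexed by $\i$-trails. I would first establish it for $\lambda$ and $\nu$ fundamental, then extend by multiplicativity of minors.
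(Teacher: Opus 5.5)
Your Steps 1--3 follow the Berenstein--Zelevinsky string-parametrization argument that the paper only cites: the crystal tensor-product rule, the string cone cut out by $d$-vectors of $\i$-trails, Littelmann's inequalities for $B(\nu)$, and $\varepsilon_i$ as a tropicalized minor. The hard part of Step~2 is deferred to their total-positivity machinery, so as a plan for the d-version this is fine. One slip: in (ii) the weight is measured from the highest weight, $\nu-\mathrm{wt}(b)=\sum_k t_k\alpha_{i_k}$, not from $w_0\nu$.

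\textbf{The gap is Step~4.} The string--Lusztig transition map is piecewise linear, not linear. In general \emph{no} affine map with unimodular linear part and integral translation carries $\bzlmnd$ onto $\bzlmnc$ for the same triple, because such maps preserve vertex denominators. Here is a counterexample.
\begin{itemize}
\item Work in type $\G$ with $\i=(1,2,1,2,1,2)$, and take $\lambda,\nu$ large with $\lambda+\nu-\mu=\alpha_1+\alpha_2$. Then (iii) and (iv) are inactive in both systems.
\item $\bzlmnd$ is the slice of the string cone $t_1\ge0$, $3t_2\ge t_3$, $2t_3\ge3t_4$, $3t_4\ge2t_5$, $t_5\ge3t_6$, $t_6\ge0$ by $t_1+t_3+t_5=t_2+t_4+t_6=1$. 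It has the vertex $(0,\tfrac12,\tfrac12,\tfrac13,\tfrac12,\tfrac16)$, of denominator $6$.
\item $\bzlmnc=\{t\ge0 : \sum_k t_k\beta_k=\alpha_1+\alpha_2\}$, with $\beta_k=s_{i_1}\cdots s_{i_{k-1}}\alpha_{i_k}$. Every $2\times2$ minor of the positive roots has absolute value at most $3$, so all its vertices have denominator $1$, $2$ or $3$.
\end{itemize}

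You can also see the failure directly. Your triangular map $t'_k=t_k+\sum_{l>k}a_{i_k,i_l}t_l$ does satisfy $\sum_k t_k\alpha_{i_k}=\sum_k t'_k\beta_k$. But using $\gamma_{k-1}-\gamma_k=c_k\alpha_{i_k}^\vee$, it gives $\sum_k d_k(\pi)t_k=(\lambda+\nu-\mu)(\gamma_0)-\sum_k c_k(\pi)t'_k$. So (iv) becomes $t'_k\le\nu(\alpha_{i_k}^\vee)$, and (i) becomes an \emph{upper} bound on $\sum_k c_k(\pi)t'_k$. The inequalities point the wrong way.

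\textbf{The fix.} Follow your map by the reflection $u_k=\nu(\alpha_{i_k}^\vee)-t'_k$. Use the identities $\sum_k\nu(\alpha_{i_k}^\vee)\beta_k=\nu-w_0\nu$ and $\sum_k c_k(\pi)\alpha_{i_k}^\vee=\gamma_0-\gamma_\ell$. One checks that this integral unimodular map sends conditions (iv), (ii), (i), (iii) of $\text{d-}\mathrm{BZ}^{\mu}_{\lambda\nu}(\g)$ exactly to conditions (i), (ii), (iv), (iii) of $\text{c-}\mathrm{BZ}^{\lambda}_{\mu\nu^{*}}(\g)$, where $\nu^{*}=-w_0\nu$. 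For example, (i) becomes $\sum_k c_k(\pi)u_k\ge(\mu+s_i\nu^{*}-\lambda)(\omega_i^\vee)$.

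So the c-polytope you reach belongs to the permuted triple, not to $(\lambda,\mu,\nu)$. You must then add the symmetry $c^{\mu}_{\lambda\nu}=c^{\lambda}_{\mu\nu^{*}}$, which follows from $\mathrm{Hom}_{\g}(V_\mu,V_\lambda\otimes V_\nu)\isom\mathrm{Hom}_{\g}(V_\mu\otimes V_{\nu^{*}},V_\lambda)$. The map identifies $\bzlmnc$ with $\text{d-}\mathrm{BZ}^{\lambda}_{\mu\nu^{*}}(\g)$. Applying your d-statement to that polytope gives $\card{\bzlmnc\cap\Z^{\ell}}=c^{\lambda}_{\mu\nu^{*}}=\clnm$. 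Alternatively, prove the c-version directly in Lusztig's parametrization, where (i) is automatic and (ii) is just the weight condition.
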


\section{BZ polytopes for type \texorpdfstring{$\G$}{G\unisubtwo}}

We continue to let $\g$ be a finite-dimensional complex semisimple Lie algebra. The goal of this section is to write completely explicit systems of linear equations and inequalities for the Berenstein--Zelevinsky polytopes $\bzlmnc$ in the type-$\G$ case. Our discussion in \cref{subsec: potential i-trails as lattice points} will apply to the general semisimple case, with the type-$\G$ case developed as a running example. In 
\cref{%
    subsec: Potential i-trails for type-G2 BZ polytopes,%
    subsec: Triviality of the composition rule in type G2,%
    subsec: Explicit linear equations and inequalities for type-G2 BZ polytopes%
}, we will restrict our attention to the type-$\G$ case.

Throughout the remainder of this section, let $\i = (i_{1}, \dotsc, i_{\ell})$ be a fixed reduced word for the longest element \(w_0 \in \Wg\). 
\begin{example}
    In our running example, we let $\g$ be  of type \(\G\) and $\ell \deftobe \ell(w_{0}) = 6$. In anticipation of the construction of the BZ polytopes, we will take $\i$ to be a reduced word for~$w_{0}$, of which there are two, namely \(s_1s_2s_1s_2s_1s_2\) and \(s_2s_1s_2s_1s_2s_1\).  We choose \(s_1s_2s_1s_2s_1s_2\) and thus \(\i \deftobe (1,2,1,2,1,2)\).
\end{example}
To compute explicit expressions for the BZ polytopes, we first need to determine the potential $\i$-trails in a given irreducible $\g$-module $V_{\lambda}$. Then we will confirm that, in type~\(\G\), all the potential $\i$-trails that arise in \cref{defn: c-vec polytope,defn: d-vec polytope} satisfy the composition rule in the definition of $\i$-trails (\cref{defn: i-trail}) and so are in fact \(\i\)-trails, proving \cref{thm: in G2 every relevant potential i-trail is an actual i-trail}.

\subsection{c-vectors of potential \texorpdfstring{$\i$}{\unibi}-trails}
\label{subsec: potential i-trails as lattice points}

Fix a dominant weight \(\lambda\) of $\g$ and weights $\gamma, \delta \in W(V_{\lambda})$. The conditions in \cref{defn: potential i-trail} on the potential \(\i\)-trails from~\(\gamma\) to~\(\delta\) in~\(V_{\lambda}\) determine a system of linear equations and inequalities that define a polytope \(\pip \subset \R^{\ell}\) whose lattice points are precisely the c-vectors of these potential \(\i\)-trails. 

In this subsection, we write explicit linear equations and inequalities for $\pip$. When the starting point $\gamma$ is fixed, a c-vector determines its potential $\i$-trail. Thus, the polytope $\pip$ defined below encodes all potential $\i$-trails from $\gamma$ to $\delta$ in $V_{\lambda}$.

Recall the conditions on a potential $\i$-trail \(\pi = (\gamma=\gamma_0,\gamma_1,\cdots, \gamma_{\ell}=\delta )\) in \(W(V_{\lambda})\):
\begin{enumerate}
    \item 
    \(\pi\) starts at \(\gamma\) and ends at \(\delta\).

    \item 
    The components of the c-vector \(c(\pi)\) are nonnegative integers.
    
    \item 
    Each weight in \(\pi\) is in \(W(V_{\lambda})\).
    
\end{enumerate}

We now express these conditions as a system of linear equations and inequalities on the c-vectors of potential $\i$-trails from $\gamma$ to $\delta$ in $V_{\lambda}$.

\begin{definition}
    Let \(T_\i\) be the \(r \times \ell\) matrix whose \(j\)-th column is the $i_{j}$-th standard-basis vector in $\R^{r}$ for $1 \le j \le \ell$.
\end{definition}

\begin{example}\label{exm: Ti for G2}
    For the type-$\G$ case, we have \(\ell = \ell(w_{0}) = 6\) and \({\rank(\g) = 2}\), and we have fixed \(\i = (1,2,1,2,1,2)\). Thus,
    \begin{equation*}
        T_\i = \mqty[ 1&0&1&0&1&0\\0&1&0&1&0&1].
    \end{equation*}
\end{example}

Every potential $\i$-trail in $V_{\lambda}$ from $\gamma$ to $\delta$ with c-vector $c = (c_{1}, \dotsc, c_{\ell})$ begins at \(\gamma\), takes steps $-c_{k}\alpha_{i_{k}}$ for $1 \le k \le \ell$ (in that order), and ends at \(\delta\).  Thus the c-vector of such a potential $\i$-trail satisfies
\begin{equation}\label{c-vec equality}
    T_\i c = [\gamma-\delta],
\end{equation}
where we write $[\eta]$ for the column vector of a point $\eta \in \rf$ with respect to the ordered basis $\alpha_{1}, \dotsc, \alpha_{r}$ of simple roots.
This is the only equality condition that the c-vectors of these potential $\i$-trails are required to satisfy.

We now impose the inequality conditions on c-vectors corresponding to the requirement that each weight $\gamma_{k}$ of a potential $\i$-trail $(\gamma = \gamma_{0}, \gamma_{1}, \dotsc, \gamma_{\ell})$ is in $W(V_{\lambda})$. Recall from \cref{sec: background} that $W(V_{\lambda})$ is the set of points in the weight polytope $P_{\lambda}$ that are congruent to~$\lambda$ modulo the root lattice. The map $\eta \mapsto [\eta]$ identifies the real span of $\Phi$ in $\hd$ with~$\R^r$. Thus, $[P_{\lambda}] \deftobe \setof{[\eta] \st \eta \in P_{\lambda}} = \setof{x \in \R^r \st A_{\lambda}x \leq b_{\lambda}}$ for some matrix \(A_\lambda\) and vector~\(b_\lambda\), and we require 
\begin{equation}\label{c-vec repn poly}
    A_\lambda [\gamma_{k}] \leq b_\lambda.
\end{equation}
Now, given a c-vector~$c$, the weight $\gamma_{k}$ of the corresponding potential $\i$-trail is given by 
\begin{equation}
    [\gamma_{k}]
        = 
        [\gamma]
        - 
        T_\i
        \mqty[\dmat{\epsilon_{k}(1),
        \ddots,\epsilon_{k}(\ell)}]c
\end{equation}
where $\epsilon_{k}(1) = \dotsb =\epsilon_{k}(k) = 1$ and \(\epsilon_{k}(k+1) = \dotsb = \epsilon_{k}(\ell) = 0\), for all \(k \in [\ell]\). Thus the corresponding inequality constraints on the c-vectors are
\begin{equation*}
    A_{\lambda} 
    T_{\i}
    \mqty[\dmat{\epsilon_k(1),
    \ddots,\epsilon_{k}(\ell)}]
    c
        \ge
        A_{\lambda} [\gamma]
        -
        b_{\lambda},
\end{equation*}
for all $k \in [\ell]$. (The additional congruence requirement on the weights in $W(V_{\lambda})$ is automatic, since each step is an integer multiple of a simple root.)

We summarize the foregoing in the following proposition describing the polytope $\pip$.

\begin{proposition}
    The c-vectors of potential $\i$-trails from $\gamma$ to $\delta$ in $V_{\lambda}$ are precisely the integer-lattice points in the rational polytope $\pip$ consisting of the points $c \in \R^{\ell}$ such that
    \begin{equation}\label{system: PIP}
        \begin{split}
            T_\i c 
                & \;=\; 
                [\gamma-\delta], \\[2ex]
            A_{\lambda}
            T_{\i}
            \mqty[\dmat{\epsilon_k(1),
            \ddots,\epsilon_{k}(\ell)}]
            c
                & \;\ge\;
                A_{\lambda} [\gamma]
                -
                b_{\lambda} \quad \text{for all $k \in [\ell]$}, \\[2ex]
            c
                & \ge
                0,
        \end{split}
    \end{equation}
    where $T_{\i}$, $A_{\lambda}$, $b_{\lambda}$, $[\eta]$, and $\epsilon_{k}(j)$ are as defined above.
\end{proposition}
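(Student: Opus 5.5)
The plan is to prove the two inclusions separately, working with the bijection between potential $\i$-trails starting at $\gamma$ and their c-vectors. The starting point $\gamma$ is fixed, and the steps are $\gamma_{k-1}-\gamma_k = c_k\alpha_{i_k}$. Hence a vector $c\in\R^{\ell}$ determines a unique sequence of points of $\rf$ by
\[
[\gamma_k] = [\gamma] - T_{\i}\,\mathrm{diag}\bigl(\epsilon_k(1),\dotsc,\epsilon_k(\ell)\bigr)\,c ,
\]
and this sequence is a potential $\i$-trail exactly when the conditions of \cref{defn: potential i-trail} hold. The whole argument reduces to translating each of those conditions into one line of \eqref{system: PIP}.

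For the forward direction, let $\pi$ be a potential $\i$-trail from $\gamma$ to $\delta$ in $V_\lambda$.
\begin{enumerate}
    \item Its c-vector consists of nonnegative integers, so it is a lattice point with $c\ge 0$.
    \item Telescoping the steps gives $\gamma-\delta=\sum_k c_k\alpha_{i_k}$. In simple-root coordinates this is exactly $T_{\i}c=[\gamma-\delta]$, because column $j$ of $T_{\i}$ is $[\alpha_{i_j}]$.
    \item Each $\gamma_k$ lies in $W(V_\lambda)\subset P_\lambda$, so $A_\lambda[\gamma_k]\le b_\lambda$. Substituting the displayed formula for $[\gamma_k]$ and rearranging gives the middle family of inequalities in \eqref{system: PIP}.
\end{enumerate}

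For the reverse direction, take $c\in \pip\cap\Z^\ell$ and define $\gamma_k$ by the same formula.
\begin{enumerate}
    \item Then $\gamma_0=\gamma$, and the equality constraint gives $\gamma_\ell=\delta$.
    \item The steps are $c_k\alpha_{i_k}$ with $c_k\in\Znn$.
    \item The inequality constraints give $[\gamma_k]\in[P_\lambda]$, so $\gamma_k\in P_\lambda$.
    \item Each $\gamma_k$ differs from $\gamma$ by an integer combination of simple roots, and $\gamma\in W(V_\lambda)$ is congruent to $\lambda$ modulo the root lattice; hence $\gamma_k$ is congruent to $\lambda$ as well.
\end{enumerate}
By the characterization of $W(V_\lambda)$ recalled in \cref{sec: background}, points (iii) and (iv) together give $\gamma_k\in W(V_\lambda)$. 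So $(\gamma_0,\dotsc,\gamma_\ell)$ is a potential $\i$-trail from $\gamma$ to $\delta$ whose c-vector is $c$. Recovering the c-vector from the trail is immediate, so the two maps are mutually inverse.

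It remains to check that $\pip$ is a rational polytope. Rationality holds because $P_\lambda$ is the convex hull of the finitely many points $w(\lambda)$, and these have rational coordinates in the simple-root basis. So $A_\lambda$ and $b_\lambda$ may be chosen rational, and all other data in \eqref{system: PIP} are integral. For boundedness, note that $c\ge0$ together with $T_{\i}c=[\gamma-\delta]$ bounds every coordinate: each $c_k$ is at most the $i_k$-th coordinate of $[\gamma-\delta]$. Thus $\pip$ is a bounded rational polyhedron.

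I expect no real obstacle. The only delicate point is the congruence condition. A point of $P_\lambda$ need not itself be a weight of $V_\lambda$, so the argument must use the fact that $W(V_\lambda)$ is precisely $P_\lambda$ intersected with $\lambda$ plus the root lattice. That fact, recalled earlier in the paper, is what makes the inequality description sufficient.
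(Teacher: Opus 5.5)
Your proposal is correct and follows the same route as the paper. The paper likewise encodes the endpoints as $T_\i c = [\gamma-\delta]$ and turns membership of each $\gamma_k$ in $P_\lambda$ into the middle inequalities via the same formula for $[\gamma_k]$. It also notes that the congruence modulo the root lattice is automatic because every step is an integer multiple of a simple root. Your explicit two-inclusion bookkeeping, and the rationality and boundedness check via $c \ge 0$ and $T_\i c = [\gamma-\delta]$, only make precise what the paper leaves implicit.
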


\begin{example}\label{exm: type G2 pip}
    Applying the above to the type-$\G$ case, we find that the c-vectors of potential $\i$-trails from $\gamma$ to $\delta$ in $V_{\lambda}$ are the lattice points in the polytope $\pip$ defined by  
    \begin{equation}\label{g2 complete itrail polytope}
        \begin{split}
            \mqty[ 1&0&1&0&1&0\\0&1&0&1&0&1]c &= [\gamma-\delta], \\
            A_\lambda\mqty[ 1&0&0&0&0&0\\0&0&0&0&0&0]c
                & \geq A_{\lambda}[\gamma]-b_\lambda, \\
            A_\lambda\mqty[ 1&0&0&0&0&0\\0&1&0&0&0&0]c
                & \geq A_{\lambda}[\gamma]-b_\lambda, \\
            A_\lambda\mqty[ 1&0&1&0&0&0\\0&1&0&0&0&0]c
                & \geq A_{\lambda}[\gamma]-b_\lambda, \\
            A_\lambda\mqty[ 1&0&1&0&0&0\\0&1&0&1&0&0]c
                & \geq A_{\lambda}[\gamma]-b_\lambda, \\
            A_\lambda\mqty[ 1&0&1&0&1&0\\0&1&0&1&0&0]c
                & \geq A_{\lambda}[\gamma]-b_\lambda, \\
            A_\lambda\mqty[ 1&0&1&0&1&0\\0&1&0&1&0&1]c
                & \geq A_{\lambda}[\gamma]-b_\lambda, \\
            c & \ge 0.
        \end{split}
    \end{equation}
\end{example}

\subsection{Potential \texorpdfstring{$\i$}{\unibi}-trails for type-\texorpdfstring{$\G$}{G\unisubtwo} BZ polytopes}
\label{subsec: Potential i-trails for type-G2 BZ polytopes}

We now continue \cref{exm: type G2 pip}.  Furthermore, we restrict to the case in which \i\ is a reduced word for \(w_0\), \(\lambda\) is a fundamental weight, and \(\gamma\) and \(\delta\) are the starting and stopping points defined in \cref{defn: c-vec polytope}.
From this point forward, we assume that $\g$ is of type~$\G$, with the simple roots ordered so that $\alpha_{1}$ (respectively~$\alpha_{2}$) is the short (respectively~long) simple root. Thus, the Cartan matrix is
\begin{equation*}
    A
        = 
        \begin{bmatrix*}[r]
             2 & -3 \\
            -1 &  2  
        \end{bmatrix*}.
\end{equation*}
The Langlands dual \(\ld{\g}\) of $\g$ is also of type~\(\G\), but with Cartan matrix $\trans{A}$. Thus, the simple roots of \(\ld{\g}\) are $\co{\alpha_{1}}$ and $\co{\alpha_{2}}$, where $\co{\alpha_{1}}$ (respectively~\(\co{\alpha_{2}}\)) is the \emph{long} (respectively~\emph{short}) simple root of $\G$.

We continue to use the notation from the discussion of the type-$\G$ case in \cref{subsec: potential i-trails as lattice points}. Thus, \(\i = (1,2,1,2,1,2)\). 

The matrices $A_{\omega_{i}^{\vee}}$ and $b_{\omega_{i}^{\vee}}$ defining the weight polytopes $P_{\omega_{i}^{\vee}}$ for $i = 1, 2$ are as follows:
\begin{align*}
    [P_{\co\omega_1}] &= \setof{\mqty[x_1\\x_2]\in\R^2 \st \begin{bmatrix*}[r]
        -3&2\\0&-1\\-3&1\\3&-2\\3&-1\\0&1
    \end{bmatrix*}\mqty[x_1\\x_2]\leq\mqty[3\\3\\3\\3\\3\\3]},
    \\
    [P_{\co\omega_2}] &= \setof{\mqty[x_1\\x_2]\in\R^2 \st \begin{bmatrix*}[r]
        -2&1\\1&-1\\-1&0\\2&-1\\1&0\\-1&1
    \end{bmatrix*}\mqty[x_1\\x_2]\leq\mqty[1\\1\\1\\1\\1\\1]}.
\end{align*}
The entries in the following table are the \(c\)-vectors of the potential $\i$-trails arising in \cref{defn: c-vec polytope}.

\begin{table}[htbp]
  \centering
  \renewcommand{\arraystretch}{1.25}
  \begin{tabular}{|c|c|c|c|}
    \hline
    Condition 3, $\co\omega_1$ & Condition 3, $\co\omega_2$ & Condition 4, $\co\omega_1$ & Condition 4, $\co\omega_2$ \\[0.5ex]
    \hline
    (0, 3, 2, 3, 1, 0) & (0, 0, 1, 2, 1, 1) & (0, 0, 1, 3, 2, 3) & (0, 1, 1, 2, 1, 0) \\
                       & (1, 0, 0, 2, 1, 1) & (1, 0, 0, 3, 2, 3) & \\
                       & (1, 1, 0, 1, 1, 1) & (1, 1, 0, 2, 2, 3) & \\
                       & (1, 2, 0, 0, 1, 1) & (1, 2, 0, 1, 2, 3) & \\
                       & (1, 2, 1, 1, 0, 0) & (1, 3, 0, 0, 2, 3) & \\
                       & (1, 2, 1, 0, 0, 1) & (1, 2, 1, 1, 1, 3) & \\
                       &                    & (1, 2, 1, 2, 1, 2) & \\
                       &                    & (1, 3, 2, 0, 0, 3) & \\
                       &                    & (1, 3, 1, 1, 1, 2) & \\
                       &                    & (1, 3, 1, 0, 1, 3) & \\
                       &                    & (1, 3, 2, 1, 0, 2) & \\
                       &                    & (1, 3, 2, 2, 0, 1) & \\
                       &                    & (1, 3, 2, 3, 0, 0) & \\
    \hline
  \end{tabular}
\end{table}

By \cref{path operator lemma}, to decide whether a potential $\i$-trail is an $\i$-trail, we need only consider the potential $\i$-trail up to path
equivalence. Using the notation of \cref{i-trail example}, we list below the distinct paths and their path operators.

\subsection{\texorpdfstring{Condition (3) of \cref{defn: c-vec polytope}: trails from $s_i\omega_i^\vee$ to $w_0\omega_i^\vee$}{Condition (3)}}
\subsubsection{$V_{\omega_1^\vee}$, maps $V((-2,-3))\to V((1,3))$: 1 potential $\i$-trail, 1 path}
\begin{center}
\begin{minipage}[t]{0.32\textwidth}\centering
\begin{tikzpicture}[scale=1.0]\label{fig: 14dim it paths 1}
\fill[wt] (0,1.732) circle (2.20pt);
\fill[wt] (1.5,0.866) circle (2.20pt);
\fill[wt] (0.5,0.866) circle (2.20pt);
\fill[wt] (-0.5,0.866) circle (2.20pt);
\fill[wt] (1,0) circle (2.20pt);
\fill[wt] (-1.5,0.866) circle (2.20pt);
\fill[wt] (0,0) circle (2.20pt);
\draw[wt,thin] (0,0) circle (4.60pt);
\fill[wt] (1.5,-0.866) circle (2.20pt);
\fill[wt] (-1,0) circle (2.20pt);
\fill[wt] (0.5,-0.866) circle (2.20pt);
\fill[wt] (-0.5,-0.866) circle (2.20pt);
\fill[wt] (-1.5,-0.866) circle (2.20pt);
\fill[wt] (0,-1.732) circle (2.20pt);
\draw[seg,red] (0,0) -- (1.5,-0.866);
\draw[seg,red] (-1.5,-0.866) -- (0,-1.732);
\draw[seg,red] (-1.5,0.866) -- (0,0);
\draw[seg,red] (1.5,-0.866) -- (0.5,-0.866);
\draw[seg,red] (0.5,-0.866) -- (-0.5,-0.866);
\draw[seg,red] (1.5,0.866) -- (0.5,0.866);
\draw[seg,red] (-0.5,-0.866) -- (-1.5,-0.866);
\draw[seg,red] (0.5,0.866) -- (-0.5,0.866);
\draw[seg,red] (-0.5,0.866) -- (-1.5,0.866);
\end{tikzpicture}
\\[4pt]{\small $p_{\pi}=e_{2}^{3}e_{1}^{2}e_{2}^{3}e_{1}$}
\end{minipage}
\end{center}
\subsubsection{$V_{\omega_2^\vee}$, maps $V((-1,-2))\to V((1,1))$: 6 potential $\i$-trails, 1 path}
\begin{center}
\begin{minipage}[t]{0.32\textwidth}\centering
\begin{tikzpicture}[scale=1.7]\label{fig: 7dim it paths 1}
\fill[wt] (0.5,0.866) circle (1.30pt);
\fill[wt] (-0.5,0.866) circle (1.30pt);
\fill[wt] (1,0) circle (1.30pt);
\fill[wt] (0,0) circle (1.30pt);
\fill[wt] (-1,0) circle (1.30pt);
\fill[wt] (0.5,-0.866) circle (1.30pt);
\fill[wt] (-0.5,-0.866) circle (1.30pt);
\draw[seg,red] (-1,0) -- (0.5,-0.866);
\draw[seg,red] (-0.5,0.866) -- (1,0);
\draw[seg,red] (0.5,-0.866) -- (-0.5,-0.866);
\draw[seg,red] (1,0) -- (0,0);
\draw[seg,red] (0,0) -- (-1,0);
\end{tikzpicture}
\\[4pt]{\small $p_{\pi}=e_{1}e_{2}^{2}e_{1}e_{2}$}
\end{minipage}
\end{center}
\clearpage
\subsection{\texorpdfstring{Condition (4) of \cref{defn: c-vec polytope}: trails from $\omega_i^\vee$ to $w_0s_i\omega_i^\vee$}{Condition (4)}}
\subsubsection{$V_{\omega_1^\vee}$, maps $V((-1,-3))\to V((2,3))$: 13 potential $\i$-trails, 4 paths}
\begin{center}
\begin{minipage}[t]{0.32\textwidth}\centering
\begin{tikzpicture}[scale=1.0]
\fill[wt] (0,1.732) circle (2.20pt);
\fill[wt] (1.5,0.866) circle (2.20pt);
\fill[wt] (0.5,0.866) circle (2.20pt);
\fill[wt] (-0.5,0.866) circle (2.20pt);
\fill[wt] (1,0) circle (2.20pt);
\fill[wt] (-1.5,0.866) circle (2.20pt);
\fill[wt] (0,0) circle (2.20pt);
\draw[wt,thin] (0,0) circle (4.60pt);
\fill[wt] (1.5,-0.866) circle (2.20pt);
\fill[wt] (-1,0) circle (2.20pt);
\fill[wt] (0.5,-0.866) circle (2.20pt);
\fill[wt] (-0.5,-0.866) circle (2.20pt);
\fill[wt] (-1.5,-0.866) circle (2.20pt);
\fill[wt] (0,-1.732) circle (2.20pt);
\draw[seg,red] (0,0) -- (1.5,-0.866);
\draw[seg,red] (0,1.732) -- (1.5,0.866);
\draw[seg,red] (-1.5,0.866) -- (0,0);
\draw[seg,red] (1.5,-0.866) -- (0.5,-0.866);
\draw[seg,red] (0.5,-0.866) -- (-0.5,-0.866);
\draw[seg,red] (1.5,0.866) -- (0.5,0.866);
\draw[seg,red] (-0.5,-0.866) -- (-1.5,-0.866);
\draw[seg,red] (0.5,0.866) -- (-0.5,0.866);
\draw[seg,red] (-0.5,0.866) -- (-1.5,0.866);
\end{tikzpicture}
\\[4pt]{\small $p_{\pi}=e_{1}e_{2}^{3}e_{1}^{2}e_{2}^{3}$}
\end{minipage}\hfill
\begin{minipage}[t]{0.32\textwidth}\centering
\begin{tikzpicture}[scale=1.0]\label{fig: 14dim it paths 2}
\fill[wt] (0,1.732) circle (2.20pt);
\fill[wt] (1.5,0.866) circle (2.20pt);
\fill[wt] (0.5,0.866) circle (2.20pt);
\fill[wt] (-0.5,0.866) circle (2.20pt);
\fill[wt] (1,0) circle (2.20pt);
\fill[wt] (-1.5,0.866) circle (2.20pt);
\fill[wt] (0,0) circle (2.20pt);
\draw[wt,thin] (0,0) circle (4.60pt);
\fill[wt] (1.5,-0.866) circle (2.20pt);
\fill[wt] (-1,0) circle (2.20pt);
\fill[wt] (0.5,-0.866) circle (2.20pt);
\fill[wt] (-0.5,-0.866) circle (2.20pt);
\fill[wt] (-1.5,-0.866) circle (2.20pt);
\fill[wt] (0,-1.732) circle (2.20pt);
\draw[seg,red] (0,0) -- (1.5,-0.866);
\draw[seg,red] (-0.5,0.866) -- (1,0);
\draw[seg,red] (0,1.732) -- (1.5,0.866);
\draw[seg,red] (1.5,-0.866) -- (0.5,-0.866);
\draw[seg,red] (0.5,-0.866) -- (-0.5,-0.866);
\draw[seg,red] (1,0) -- (0,0);
\draw[seg,red] (1.5,0.866) -- (0.5,0.866);
\draw[seg,red] (-0.5,-0.866) -- (-1.5,-0.866);
\draw[seg,red] (0.5,0.866) -- (-0.5,0.866);
\end{tikzpicture}
\\[4pt]{\small $p_{\pi}=e_{1}e_{2}^{2}e_{1}e_{2}e_{1}e_{2}^{3}$}
\end{minipage}\hfill
\begin{minipage}[t]{0.32\textwidth}\centering
\begin{tikzpicture}[scale=1.0]
\fill[wt] (0,1.732) circle (2.20pt);
\fill[wt] (1.5,0.866) circle (2.20pt);
\fill[wt] (0.5,0.866) circle (2.20pt);
\fill[wt] (-0.5,0.866) circle (2.20pt);
\fill[wt] (1,0) circle (2.20pt);
\fill[wt] (-1.5,0.866) circle (2.20pt);
\fill[wt] (0,0) circle (2.20pt);
\draw[wt,thin] (0,0) circle (4.60pt);
\fill[wt] (1.5,-0.866) circle (2.20pt);
\fill[wt] (-1,0) circle (2.20pt);
\fill[wt] (0.5,-0.866) circle (2.20pt);
\fill[wt] (-0.5,-0.866) circle (2.20pt);
\fill[wt] (-1.5,-0.866) circle (2.20pt);
\fill[wt] (0,-1.732) circle (2.20pt);
\draw[seg,red] (-1,0) -- (0.5,-0.866);
\draw[seg,red] (-0.5,0.866) -- (1,0);
\draw[seg,red] (0,1.732) -- (1.5,0.866);
\draw[seg,red] (0.5,-0.866) -- (-0.5,-0.866);
\draw[seg,red] (1,0) -- (0,0);
\draw[seg,red] (1.5,0.866) -- (0.5,0.866);
\draw[seg,red] (-0.5,-0.866) -- (-1.5,-0.866);
\draw[seg,red] (0,0) -- (-1,0);
\draw[seg,red] (0.5,0.866) -- (-0.5,0.866);
\end{tikzpicture}
\\[4pt]{\small $p_{\pi}=e_{1}e_{2}^{2}e_{1}e_{2}^{2}e_{1}e_{2}^{2}$}
\end{minipage}

\bigskip

\begin{minipage}[t]{0.32\textwidth}\centering
\begin{tikzpicture}[scale=1.0]
\fill[wt] (0,1.732) circle (2.20pt);
\fill[wt] (1.5,0.866) circle (2.20pt);
\fill[wt] (0.5,0.866) circle (2.20pt);
\fill[wt] (-0.5,0.866) circle (2.20pt);
\fill[wt] (1,0) circle (2.20pt);
\fill[wt] (-1.5,0.866) circle (2.20pt);
\fill[wt] (0,0) circle (2.20pt);
\draw[wt,thin] (0,0) circle (4.60pt);
\fill[wt] (1.5,-0.866) circle (2.20pt);
\fill[wt] (-1,0) circle (2.20pt);
\fill[wt] (0.5,-0.866) circle (2.20pt);
\fill[wt] (-0.5,-0.866) circle (2.20pt);
\fill[wt] (-1.5,-0.866) circle (2.20pt);
\fill[wt] (0,-1.732) circle (2.20pt);
\draw[seg,red] (-1,0) -- (0.5,-0.866);
\draw[seg,red] (0,1.732) -- (1.5,0.866);
\draw[seg,red] (-1.5,0.866) -- (0,0);
\draw[seg,red] (0.5,-0.866) -- (-0.5,-0.866);
\draw[seg,red] (1.5,0.866) -- (0.5,0.866);
\draw[seg,red] (-0.5,-0.866) -- (-1.5,-0.866);
\draw[seg,red] (0,0) -- (-1,0);
\draw[seg,red] (0.5,0.866) -- (-0.5,0.866);
\draw[seg,red] (-0.5,0.866) -- (-1.5,0.866);
\end{tikzpicture}
\\[4pt]{\small $p_{\pi}=e_{1}e_{2}^{3}e_{1}e_{2}e_{1}e_{2}^{2}$}
\end{minipage}
\end{center}
\subsubsection{$V_{\omega_2^\vee}$, maps $V((-1,-1))\to V((1,2))$: 1 potential $\i$-trail, 1 path}
\begin{center}
\begin{minipage}[t]{0.32\textwidth}\centering
\begin{tikzpicture}[scale=1.7]\label{fig: 7dim it paths 2}
\fill[wt] (0.5,0.866) circle (1.30pt);
\fill[wt] (-0.5,0.866) circle (1.30pt);
\fill[wt] (1,0) circle (1.30pt);
\fill[wt] (0,0) circle (1.30pt);
\fill[wt] (-1,0) circle (1.30pt);
\fill[wt] (0.5,-0.866) circle (1.30pt);
\fill[wt] (-0.5,-0.866) circle (1.30pt);
\draw[seg,red] (-1,0) -- (0.5,-0.866);
\draw[seg,red] (-0.5,0.866) -- (1,0);
\draw[seg,red] (1,0) -- (0,0);
\draw[seg,red] (0,0) -- (-1,0);
\draw[seg,red] (0.5,0.866) -- (-0.5,0.866);
\end{tikzpicture}
\\[4pt]{\small $p_{\pi}=e_{2}e_{1}e_{2}^{2}e_{1}$}
\end{minipage}
\end{center}

\subsection{Triviality of the composition rule in type \texorpdfstring{$\G$}{G\unisubtwo}}
\label{subsec: Triviality of the composition rule in type G2}

So far we have found all the potential $\i$-trails necessary to compute the \(\bzlmnc\) polytopes for type \(\G\). Now all that remains is to check which potential $\i$-trails satisfy the composition rule. However, we find that in fact each of these potential $\i$-trails is an $\i$-trail.

\begin{theorem}\label{composition quasiminiscule case}
    Let \(\i\in R(w_0)\).  For $i = 1, 2$, every potential $\i$-trail from $s_{i}\omega_{i}^{\vee}$ to $w_{0}\omega_{i}^{\vee}$ and from $\omega_{i}^{\vee}$ to $w_{0}s_{i}\omega_{i}^{\vee}$ in $V_{\omega_{i}^{\vee}}$ is an $\i$-trail.
\end{theorem}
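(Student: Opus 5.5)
By \cref{path operator lemma}, whether a potential $\i$-trail is an $\i$-trail depends only on its path. So the statement reduces to a finite check: for each of the two reduced words $\i\in R(w_0)$, each $i\in\{1,2\}$, and each \emph{path} of a potential $\i$-trail with the prescribed endpoints, show that the path operator $p_\pi$ is nonzero as a map $V(\delta)\to V(\gamma)$. For $\i=(1,2,1,2,1,2)$ the paths are already enumerated above: one path in each Condition-(3) case, four paths for $V_{\omega_1^\vee}$ in Condition (4), and one path for $V_{\omega_2^\vee}$ in Condition (4). For $\i=(2,1,2,1,2,1)$ I will first enumerate the paths the same way, as lattice points of the polytope $\pip$ from \cref{exm: type G2 pip} with $T_\i$ replaced accordingly. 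This is a routine lattice-point computation and yields a similarly short list.

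In every case the starting weight $\delta$ (namely $w_0\omega_i^\vee$ or $w_0s_i\omega_i^\vee$) is an extremal weight, so $V(\delta)$ is one-dimensional. It therefore suffices to fix a nonzero $v_\delta\in V(\delta)$ and show $p_\pi v_\delta\neq0$. I will build explicit Chevalley-basis matrices for $e_1,e_2$ (of $\ld\g$) on the $7$-dimensional module $V_{\omega_2^\vee}$ and the $14$-dimensional adjoint module $V_{\omega_1^\vee}$, using a weight basis labelled as in \cref{i-trail example}. Then I apply the factors of $p_\pi$ right to left.

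Most factors can be handled without matrices, by $\mathfrak{sl}_2$-theory. If $v$ is an extremal weight vector of weight $\beta$ with $\beta(\alpha_j^\vee)=-m\le0$, then $e_j^c v\ne0$ for $0\le c\le m$, and $e_j^m v$ is again extremal, of weight $s_j\beta$. Every step of a path that runs along a full root string between extremal weights (for example the runs $e_2^3$ and $e_1^2$ along the boundary of the hexagon) is nonzero for this reason, and stays among extremal vectors.

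The main obstacle is the steps that pass through non-extremal weights, above all the zero weight space. That space is $2$-dimensional in the adjoint module, and a vector landing there need not be a lowest vector of the relevant $\mathfrak{sl}_2$-string, so a subsequent $e_j$ might kill it. This occurs in the paths with factors like $e_1e_2^{2}e_1e_2e_1e_2^3$ and $e_1e_2^2e_1e_2^2e_1e_2^2$, and in the $7$-dimensional paths $e_1e_2^2e_1e_2$ and $e_2e_1e_2^2e_1$. For these steps I will compute the image vector explicitly in the chosen basis and then apply the remaining factors. Alternatively, the short segments can be checked by the nondegenerate pairing: $e_j^c v\neq 0$ iff some $f_j^c$-image pairs nontrivially with $v$. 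If a bookkeeping shortcut is wanted, I would decompose the zero weight space under each $\mathfrak{sl}_2^{(j)}$. In the adjoint module it splits into a trivial piece and a piece in a $3$-dimensional (for the long root) or $7$-/$3$-dimensional string. Each path operator segment through $0$ then acts by a nonzero scalar on the component it sees, provided the incoming vector has nonzero projection onto that component. That projection is exactly what the explicit computation verifies.
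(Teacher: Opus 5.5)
Your plan is correct, but it takes a different route from the paper's proof.

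\paragraph{How the paper argues.} The paper treats the three single-path cases (Condition (3) for $i=1,2$ and Condition (4) for $i=2$) without any computation. It cites \cite[Proposition~9.2]{BerZel2001} for the existence of \emph{some} $\i$-trail between the two extremal weights. Since all potential $\i$-trails in each of these cases share one path, and hence one path operator, that operator must be nonzero. Only for Condition (4) with $i=1$ does the paper argue step by step along the path. There the single identity $e_je_i\cdot f_i=-a_{i,j}e_j$ does the work of your matrix computation. A vector entering the $2$-dimensional zero weight space from weight $-\alpha_i^\vee$ is a multiple of $[e_i,f_i]$. It survives the next raising operator because every entry of the $\G$ Cartan matrix is nonzero.

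\paragraph{What each route buys.} Your approach is self-contained: it needs neither the Berenstein--Zelevinsky existence result nor the uniqueness of the path. It also handles $\i=(2,1,2,1,2,1)$ explicitly instead of calling it ``similar''. The paper's approach is shorter and shows \emph{why} nothing vanishes in type $\G$.

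\paragraph{Two remarks on your execution.}

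First, your extremal-vector $\mathfrak{sl}_2$ argument already settles more cases than you claim. For $\i=(1,2,1,2,1,2)$, consider
\[
e_2^3e_1^2e_2^3e_1,\quad e_1e_2^3e_1^2e_2^3,\quad e_1e_2^2e_1e_2,\quad e_2e_1e_2^2e_1 .
\]
In each of these, every factor is a complete string starting at an extremal weight. Note that the run $e_1^2$ goes from $-\alpha_1^\vee$ through $0$ to $\alpha_1^\vee$, not along the boundary of the hexagon. So the two $7$-dimensional paths are not obstacles at all. Only the other three Condition-(4), $i=1$ paths need more: the root-string property on one-dimensional weight spaces together with the identity above.

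Second, your proposed shortcut misdescribes the adjoint module. Under either simple-root $\mathfrak{sl}_2$, all strings have dimension at most $4$, so there is no $7$-dimensional string. For both $j$, the zero weight space is the sum of two lines:
\begin{itemize}
  \item the line spanned by $[e_j,f_j]$, which lies in the $3$-dimensional string through $\pm\alpha_j^\vee$;
  \item a line on which $\mathfrak{sl}_2^{(j)}$ acts trivially.
\end{itemize}
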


\begin{proof}
    We will prove the theorem in the case where $\i = (1,2,1,2,1,2)$. The case in which $\i = (2,1,2,1,2,1)$ is similar.
    
    We begin with the $i = 2$ case. Recall that $\omega_{2}^{\vee}$ is the short fundamental weight for $\ld{\g}$. By inspection of \cref{fig: 7dim it paths 1,fig: 7dim it paths 2}, all potential $\i$-trails from $s_{2}\omega_{2}^{\vee}$ to $w_{0}\omega_{2}^{\vee}$ have the same path and hence the same path operator. Thus, it suffices to show that one of these potential $\i$-trails is an $\i$-trail, since then this common path~operator is nonzero, from which it will follow that all of these potential $\i$-trails are $\i$-trails.
    
    To show that there is an $\i$-trail (which must be among the potential $\i$-trails) from $s_{2}\omega_{2}^{\vee}$ to $w_{0}\omega_{2}^{\vee}$ in \(V_{\co\omega_2}\), we apply \cite[Proposition 9.2]{BerZel2001}, which states that, for all \(u,v\in W\), there is an $\i$-trail from \(u\omega_{2}^{\vee}\) to \(v\omega_{2}^{\vee}\) if \(\lng{uv^{-1} } = \lng{v} - \lng{u}\) and \(uv^{-1} \leq w_0\) in the strong Bruhat order. Indeed, this is true;  furthermore, \(uv^{-1} \leq w_0\) trivially because $w_{0}$ is the largest element in Bruhat order.

    A similar argument shows that every potential $\i$-trail from \(\omega_{2}^{\vee}\) to \(w_{0}s_{2}\omega_{2}^{\vee}\) in $V_{\omega_{2}^{\vee}}$ is an $\i$-trail.

    We now consider the \(i=1\) case.  Let \(\pi\) be a potential $\i$-trail from \(s_1\co\omega_1\) to \(w_0\co\omega_1\) in \(V_{\co\omega_1}\). We see in \cref{fig: 14dim it paths 1} that there is again exactly one path that these potential $\i$-trails follow (in fact there is only one such potential $\i$-trail).  As this potential $\i$-trail path is extremal, a similar argument as in the previous paragraph holds.

    Finally we now let \(\pi\) be a potential $\i$-trail from \(\omega_{1}^{\vee}\) to \(w_{0}s_{1}\omega_{1}^{\vee}\) in \(V_{\co\omega_1}\).  Each pair $\zeta, \eta$ of consecutive weights in the potential $\i$-trail path of $\pi$ falls into one of the following three cases:
        (1)~all weight spaces in the convex hull of $\setof{\zeta, \eta}$ are one-dimensional;
        (2)~both $\zeta$ and $\eta$ have multiplicity one, but exactly one weight between them in the representation has multiplicity~$2$; or
        (3)~exactly one of $\zeta$ or $\eta$ has multiplicity $2$.
    In the first case we find the composition of the Chevalley generators is a nonzero map between one-dimensional subspaces by the root string property \cite{Hum1972}.  For the second case we need to consider how the composition \(e_je_i\) acts on \(f_i\), as this is the only composition of Chevalley generators that can show up in this manner, since our first weight space is spanned by one of the negative simple roots:
    \begin{align*}
        e_je_i\cdot f_i 
            &= 
            e_j\cdot \ad(e_i)(f_i) 
            = 
            e_j\cdot \liebracket{e_i}{f_i} 
            = 
            e_j\cdot \alpha^\vee_i \\
            &=
            \liebracket{e_j}{\alpha^\vee_i} 
            = 
            -\liebracket{\alpha^\vee_i}{e_j} 
            = 
            - \alpha_j(\alpha^\vee_i)e_j 
            = 
            - a_{i,j}e_j,
    \end{align*}
    where \(a_{i,j}\) is the \((i,j)\)-th entry in the Cartan matrix, which is never~$0$ in type~$\G$.
    In the third case, if the multiplicities of~$\zeta$ and~$\eta$ are~$1$ and~$2$ respectively then the next weight in the $\i$-trail path is of multiplicity~$1$, and again the above calculation covers this case. A similar argument holds if the weights are of multiplicity~$2$ and~$1$ respectively. 
\end{proof}

\subsection{Explicit linear equations and inequalities for \\ \texorpdfstring{$\bzlmnc$}{c-vector BZ polytopes} in type-\texorpdfstring{$\G$}{G\unisubtwo}} 
\label{subsec: Explicit linear equations and inequalities for type-G2 BZ polytopes}

As we have seen, each relevant potential $\i$-trail is an actual $\i$-trail, so we get an inequality for the BZ polytope from each $\i$-trail. In the following diagrams we show which inequality comes from each $\i$-trail. 

\subsubsection{Condition (3) of \cref{defn: c-vec polytope}: $\mathbf i$-trails from $s_i\omega_i^\vee$ to $w_0\omega_i^\vee$}
Each $\mathbf i$-trail $\pi$ in $V_{\omega_i^\vee}$ contributes $\sum_k c_k(\pi)\,t_k \ge (s_i\lambda+\nu-\mu)(\omega_i^\vee) = (\lambda+\nu-\mu)(\omega_i^\vee)-\lambda(\alpha_i^\vee)$.
\subsubsection{$i=1$: $V_{\omega_1^\vee}$, from $\gamma=(1,3)$ to $\delta=(-2,-3)$ --- 1 $\i$-trail}
\begin{center}
\begin{minipage}[t]{0.48\textwidth}\centering
\begin{tikzpicture}[scale=1.0]
\fill[wt] (0,1.732) circle (2.2pt);
\fill[wt] (1.5,0.866) circle (2.2pt);
\fill[wt] (0.5,0.866) circle (2.2pt);
\fill[wt] (-0.5,0.866) circle (2.2pt);
\fill[wt] (1,0) circle (2.2pt);
\fill[wt] (-1.5,0.866) circle (2.2pt);
\fill[wt] (0,0) circle (2.2pt);
\draw[wt,thin] (0,0) circle (4.6pt);
\fill[wt] (1.5,-0.866) circle (2.2pt);
\fill[wt] (-1,0) circle (2.2pt);
\fill[wt] (0.5,-0.866) circle (2.2pt);
\fill[wt] (-0.5,-0.866) circle (2.2pt);
\fill[wt] (-1.5,-0.866) circle (2.2pt);
\fill[wt] (0,-1.732) circle (2.2pt);
\draw[trailstep,itwo] (1.5,0.866) to[bend left=14] (-1.5,0.866);
\node[lab,text=itwo] at (0,0.496) {2};
\draw[trailstep,ione] (-1.5,0.866) to[bend left=14] (1.5,-0.866);
\node[lab,text=ione] at (0.201,0.349) {3};
\draw[trailstep,itwo] (1.5,-0.866) to[bend left=14] (-1.5,-0.866);
\node[lab,text=itwo] at (0,-1.236) {4};
\draw[trailstep,ione] (-1.5,-0.866) to[bend left=14] (0,-1.732);
\node[lab,text=ione] at (-0.609,-1.055) {5};
\draw[thick] (1.5,0.866) circle (7pt);
\draw[thick] (-0.16,-1.892) rectangle (0.16,-1.572);
\end{tikzpicture}
\\[2pt]{\small $\pi_{1}$: $c=(0,3,2,3,1,0)$}\\{\small $3t_2+2t_3+3t_4+t_5$}\\{\small ${}\ge (s_1\lambda+\nu-\mu)(\omega_1^\vee)$}
\end{minipage}
\end{center}
\subsubsection{$i=2$: $V_{\omega_2^\vee}$, from $\gamma=(1,1)$ to $\delta=(-1,-2)$ --- 6 $\i$-trails}
\begin{center}
\begin{minipage}[t]{0.48\textwidth}\centering
\begin{tikzpicture}[scale=1.7]
\fill[wt] (0.5,0.866) circle (2.2pt);
\fill[wt] (-0.5,0.866) circle (2.2pt);
\fill[wt] (1,0) circle (2.2pt);
\fill[wt] (0,0) circle (2.2pt);
\fill[wt] (-1,0) circle (2.2pt);
\fill[wt] (0.5,-0.866) circle (2.2pt);
\fill[wt] (-0.5,-0.866) circle (2.2pt);
\draw[trailstep,ione] (-0.5,0.866) to[bend left=14] (1,0);
\node[lab,text=ione] at (0.391,0.677) {3};
\draw[trailstep,itwo] (1,0) to[bend left=14] (-1,0);
\node[lab,text=itwo] at (0,-0.3) {4};
\draw[trailstep,ione] (-1,0) to[bend left=14] (0.5,-0.866);
\node[lab,text=ione] at (-0.109,-0.189) {5};
\draw[trailstep,itwo] (0.5,-0.866) to[bend left=14] (-0.5,-0.866);
\node[lab,text=itwo] at (0,-1.096) {6};
\draw[thick] (-0.5,0.866) circle (7pt);
\draw[thick] (-0.66,-1.026) rectangle (-0.34,-0.706);
\end{tikzpicture}
\\[2pt]{\small $\pi_{1}$: $c=(0,0,1,2,1,1)$}\\{\small $t_3+2t_4+t_5+t_6$}\\{\small ${}\ge (s_2\lambda+\nu-\mu)(\omega_2^\vee)$}
\end{minipage}\hfill
\begin{minipage}[t]{0.48\textwidth}\centering
\begin{tikzpicture}[scale=1.7]
\fill[wt] (0.5,0.866) circle (2.2pt);
\fill[wt] (-0.5,0.866) circle (2.2pt);
\fill[wt] (1,0) circle (2.2pt);
\fill[wt] (0,0) circle (2.2pt);
\fill[wt] (-1,0) circle (2.2pt);
\fill[wt] (0.5,-0.866) circle (2.2pt);
\fill[wt] (-0.5,-0.866) circle (2.2pt);
\draw[trailstep,ione] (-0.5,0.866) to[bend left=14] (1,0);
\node[lab,text=ione] at (0.391,0.677) {1};
\draw[trailstep,itwo] (1,0) to[bend left=14] (-1,0);
\node[lab,text=itwo] at (0,-0.3) {4};
\draw[trailstep,ione] (-1,0) to[bend left=14] (0.5,-0.866);
\node[lab,text=ione] at (-0.109,-0.189) {5};
\draw[trailstep,itwo] (0.5,-0.866) to[bend left=14] (-0.5,-0.866);
\node[lab,text=itwo] at (0,-1.096) {6};
\draw[thick] (-0.5,0.866) circle (7pt);
\draw[thick] (-0.66,-1.026) rectangle (-0.34,-0.706);
\end{tikzpicture}
\\[2pt]{\small $\pi_{2}$: $c=(1,0,0,2,1,1)$}\\{\small $t_1+2t_4+t_5+t_6$}\\{\small ${}\ge (s_2\lambda+\nu-\mu)(\omega_2^\vee)$}
\end{minipage}

\bigskip

\begin{minipage}[t]{0.48\textwidth}\centering
\begin{tikzpicture}[scale=1.7]
\fill[wt] (0.5,0.866) circle (2.2pt);
\fill[wt] (-0.5,0.866) circle (2.2pt);
\fill[wt] (1,0) circle (2.2pt);
\fill[wt] (0,0) circle (2.2pt);
\fill[wt] (-1,0) circle (2.2pt);
\fill[wt] (0.5,-0.866) circle (2.2pt);
\fill[wt] (-0.5,-0.866) circle (2.2pt);
\draw[trailstep,ione] (-0.5,0.866) to[bend left=14] (1,0);
\node[lab,text=ione] at (0.391,0.677) {1};
\draw[trailstep,itwo] (1,0) to[bend left=14] (0,0);
\node[lab,text=itwo] at (0.5,-0.23) {2};
\draw[trailstep,itwo] (0,0) to[bend left=14] (-1,0);
\node[lab,text=itwo] at (-0.5,-0.23) {4};
\draw[trailstep,ione] (-1,0) to[bend left=14] (0.5,-0.866);
\node[lab,text=ione] at (-0.109,-0.189) {5};
\draw[trailstep,itwo] (0.5,-0.866) to[bend left=14] (-0.5,-0.866);
\node[lab,text=itwo] at (0,-1.096) {6};
\draw[thick] (-0.5,0.866) circle (7pt);
\draw[thick] (-0.66,-1.026) rectangle (-0.34,-0.706);
\end{tikzpicture}
\\[2pt]{\small $\pi_{3}$: $c=(1,1,0,1,1,1)$}\\{\small $t_1+t_2+t_4+t_5+t_6$}\\{\small ${}\ge (s_2\lambda+\nu-\mu)(\omega_2^\vee)$}
\end{minipage}\hfill
\begin{minipage}[t]{0.48\textwidth}\centering
\begin{tikzpicture}[scale=1.7]
\fill[wt] (0.5,0.866) circle (2.2pt);
\fill[wt] (-0.5,0.866) circle (2.2pt);
\fill[wt] (1,0) circle (2.2pt);
\fill[wt] (0,0) circle (2.2pt);
\fill[wt] (-1,0) circle (2.2pt);
\fill[wt] (0.5,-0.866) circle (2.2pt);
\fill[wt] (-0.5,-0.866) circle (2.2pt);
\draw[trailstep,ione] (-0.5,0.866) to[bend left=14] (1,0);
\node[lab,text=ione] at (0.391,0.677) {1};
\draw[trailstep,itwo] (1,0) to[bend left=14] (-1,0);
\node[lab,text=itwo] at (0,-0.3) {2};
\draw[trailstep,ione] (-1,0) to[bend left=14] (0.5,-0.866);
\node[lab,text=ione] at (-0.109,-0.189) {5};
\draw[trailstep,itwo] (0.5,-0.866) to[bend left=14] (-0.5,-0.866);
\node[lab,text=itwo] at (0,-1.096) {6};
\draw[thick] (-0.5,0.866) circle (7pt);
\draw[thick] (-0.66,-1.026) rectangle (-0.34,-0.706);
\end{tikzpicture}
\\[2pt]{\small $\pi_{4}$: $c=(1,2,0,0,1,1)$}\\{\small $t_1+2t_2+t_5+t_6$}\\{\small ${}\ge (s_2\lambda+\nu-\mu)(\omega_2^\vee)$}
\end{minipage}

\bigskip

\begin{minipage}[t]{0.48\textwidth}\centering
\begin{tikzpicture}[scale=1.7]
\fill[wt] (0.5,0.866) circle (2.2pt);
\fill[wt] (-0.5,0.866) circle (2.2pt);
\fill[wt] (1,0) circle (2.2pt);
\fill[wt] (0,0) circle (2.2pt);
\fill[wt] (-1,0) circle (2.2pt);
\fill[wt] (0.5,-0.866) circle (2.2pt);
\fill[wt] (-0.5,-0.866) circle (2.2pt);
\draw[trailstep,ione] (-0.5,0.866) to[bend left=14] (1,0);
\node[lab,text=ione] at (0.391,0.677) {1};
\draw[trailstep,itwo] (1,0) to[bend left=14] (-1,0);
\node[lab,text=itwo] at (0,-0.3) {2};
\draw[trailstep,ione] (-1,0) to[bend left=14] (0.5,-0.866);
\node[lab,text=ione] at (-0.109,-0.189) {3};
\draw[trailstep,itwo] (0.5,-0.866) to[bend left=14] (-0.5,-0.866);
\node[lab,text=itwo] at (0,-1.096) {6};
\draw[thick] (-0.5,0.866) circle (7pt);
\draw[thick] (-0.66,-1.026) rectangle (-0.34,-0.706);
\end{tikzpicture}
\\[2pt]{\small $\pi_{5}$: $c=(1,2,1,0,0,1)$}\\{\small $t_1+2t_2+t_3+t_6$}\\{\small ${}\ge (s_2\lambda+\nu-\mu)(\omega_2^\vee)$}
\end{minipage}\hfill
\begin{minipage}[t]{0.48\textwidth}\centering
\begin{tikzpicture}[scale=1.7]
\fill[wt] (0.5,0.866) circle (2.2pt);
\fill[wt] (-0.5,0.866) circle (2.2pt);
\fill[wt] (1,0) circle (2.2pt);
\fill[wt] (0,0) circle (2.2pt);
\fill[wt] (-1,0) circle (2.2pt);
\fill[wt] (0.5,-0.866) circle (2.2pt);
\fill[wt] (-0.5,-0.866) circle (2.2pt);
\draw[trailstep,ione] (-0.5,0.866) to[bend left=14] (1,0);
\node[lab,text=ione] at (0.391,0.677) {1};
\draw[trailstep,itwo] (1,0) to[bend left=14] (-1,0);
\node[lab,text=itwo] at (0,-0.3) {2};
\draw[trailstep,ione] (-1,0) to[bend left=14] (0.5,-0.866);
\node[lab,text=ione] at (-0.109,-0.189) {3};
\draw[trailstep,itwo] (0.5,-0.866) to[bend left=14] (-0.5,-0.866);
\node[lab,text=itwo] at (0,-1.096) {4};
\draw[thick] (-0.5,0.866) circle (7pt);
\draw[thick] (-0.66,-1.026) rectangle (-0.34,-0.706);
\end{tikzpicture}
\\[2pt]{\small $\pi_{6}$: $c=(1,2,1,1,0,0)$}\\{\small $t_1+2t_2+t_3+t_4$}\\{\small ${}\ge (s_2\lambda+\nu-\mu)(\omega_2^\vee)$}
\end{minipage}

\bigskip

\end{center}
\clearpage
\subsubsection{Condition (4) of \cref{defn: c-vec polytope}: $\mathbf i$-trails from $\omega_i^\vee$ to $w_0s_i\omega_i^\vee$}

Each $\mathbf i$-trail $\pi$ in $V_{\omega_i^\vee}$ contributes $\sum_k c_k(\pi)\,t_k \ge (\lambda+s_i\nu-\mu)(\omega_i^\vee) = (\lambda+\nu-\mu)(\omega_i^\vee)-\nu(\alpha_i^\vee)$.
\subsubsection{$i=1$: $V_{\omega_1^\vee}$, from $\gamma=(2,3)$ to $\delta=(-1,-3)$ --- 13 $\i$-trails}
\begin{center}
\begin{minipage}[t]{0.48\textwidth}\centering
\begin{tikzpicture}[scale=1.0]
\fill[wt] (0,1.732) circle (2.2pt);
\fill[wt] (1.5,0.866) circle (2.2pt);
\fill[wt] (0.5,0.866) circle (2.2pt);
\fill[wt] (-0.5,0.866) circle (2.2pt);
\fill[wt] (1,0) circle (2.2pt);
\fill[wt] (-1.5,0.866) circle (2.2pt);
\fill[wt] (0,0) circle (2.2pt);
\draw[wt,thin] (0,0) circle (4.6pt);
\fill[wt] (1.5,-0.866) circle (2.2pt);
\fill[wt] (-1,0) circle (2.2pt);
\fill[wt] (0.5,-0.866) circle (2.2pt);
\fill[wt] (-0.5,-0.866) circle (2.2pt);
\fill[wt] (-1.5,-0.866) circle (2.2pt);
\fill[wt] (0,-1.732) circle (2.2pt);
\draw[trailstep,ione] (0,1.732) to[bend left=14] (1.5,0.866);
\node[lab,text=ione] at (0.891,1.543) {3};
\draw[trailstep,itwo] (1.5,0.866) to[bend left=14] (-1.5,0.866);
\node[lab,text=itwo] at (0,0.496) {4};
\draw[trailstep,ione] (-1.5,0.866) to[bend left=14] (1.5,-0.866);
\node[lab,text=ione] at (0.201,0.349) {5};
\draw[trailstep,itwo] (1.5,-0.866) to[bend left=14] (-1.5,-0.866);
\node[lab,text=itwo] at (0,-1.236) {6};
\draw[thick] (0,1.732) circle (7pt);
\draw[thick] (-1.66,-1.026) rectangle (-1.34,-0.706);
\end{tikzpicture}
\\[2pt]{\small $\pi_{1}$: $c=(0,0,1,3,2,3)$}\\{\small $t_3+3t_4+2t_5+3t_6$}\\{\small ${}\ge (\lambda+s_1\nu-\mu)(\omega_1^\vee)$}
\end{minipage}\hfill
\begin{minipage}[t]{0.48\textwidth}\centering
\begin{tikzpicture}[scale=1.0]
\fill[wt] (0,1.732) circle (2.2pt);
\fill[wt] (1.5,0.866) circle (2.2pt);
\fill[wt] (0.5,0.866) circle (2.2pt);
\fill[wt] (-0.5,0.866) circle (2.2pt);
\fill[wt] (1,0) circle (2.2pt);
\fill[wt] (-1.5,0.866) circle (2.2pt);
\fill[wt] (0,0) circle (2.2pt);
\draw[wt,thin] (0,0) circle (4.6pt);
\fill[wt] (1.5,-0.866) circle (2.2pt);
\fill[wt] (-1,0) circle (2.2pt);
\fill[wt] (0.5,-0.866) circle (2.2pt);
\fill[wt] (-0.5,-0.866) circle (2.2pt);
\fill[wt] (-1.5,-0.866) circle (2.2pt);
\fill[wt] (0,-1.732) circle (2.2pt);
\draw[trailstep,ione] (0,1.732) to[bend left=14] (1.5,0.866);
\node[lab,text=ione] at (0.891,1.543) {1};
\draw[trailstep,itwo] (1.5,0.866) to[bend left=14] (-1.5,0.866);
\node[lab,text=itwo] at (0,0.496) {4};
\draw[trailstep,ione] (-1.5,0.866) to[bend left=14] (1.5,-0.866);
\node[lab,text=ione] at (0.201,0.349) {5};
\draw[trailstep,itwo] (1.5,-0.866) to[bend left=14] (-1.5,-0.866);
\node[lab,text=itwo] at (0,-1.236) {6};
\draw[thick] (0,1.732) circle (7pt);
\draw[thick] (-1.66,-1.026) rectangle (-1.34,-0.706);
\end{tikzpicture}
\\[2pt]{\small $\pi_{2}$: $c=(1,0,0,3,2,3)$}\\{\small $t_1+3t_4+2t_5+3t_6$}\\{\small ${}\ge (\lambda+s_1\nu-\mu)(\omega_1^\vee)$}
\end{minipage}

\bigskip

\begin{minipage}[t]{0.48\textwidth}\centering
\begin{tikzpicture}[scale=1.0]
\fill[wt] (0,1.732) circle (2.2pt);
\fill[wt] (1.5,0.866) circle (2.2pt);
\fill[wt] (0.5,0.866) circle (2.2pt);
\fill[wt] (-0.5,0.866) circle (2.2pt);
\fill[wt] (1,0) circle (2.2pt);
\fill[wt] (-1.5,0.866) circle (2.2pt);
\fill[wt] (0,0) circle (2.2pt);
\draw[wt,thin] (0,0) circle (4.6pt);
\fill[wt] (1.5,-0.866) circle (2.2pt);
\fill[wt] (-1,0) circle (2.2pt);
\fill[wt] (0.5,-0.866) circle (2.2pt);
\fill[wt] (-0.5,-0.866) circle (2.2pt);
\fill[wt] (-1.5,-0.866) circle (2.2pt);
\fill[wt] (0,-1.732) circle (2.2pt);
\draw[trailstep,ione] (0,1.732) to[bend left=14] (1.5,0.866);
\node[lab,text=ione] at (0.891,1.543) {1};
\draw[trailstep,itwo] (1.5,0.866) to[bend left=14] (0.5,0.866);
\node[lab,text=itwo] at (1,0.636) {2};
\draw[trailstep,itwo] (0.5,0.866) to[bend left=14] (-1.5,0.866);
\node[lab,text=itwo] at (-0.5,0.566) {4};
\draw[trailstep,ione] (-1.5,0.866) to[bend left=14] (1.5,-0.866);
\node[lab,text=ione] at (0.201,0.349) {5};
\draw[trailstep,itwo] (1.5,-0.866) to[bend left=14] (-1.5,-0.866);
\node[lab,text=itwo] at (0,-1.236) {6};
\draw[thick] (0,1.732) circle (7pt);
\draw[thick] (-1.66,-1.026) rectangle (-1.34,-0.706);
\end{tikzpicture}
\\[2pt]{\small $\pi_{3}$: $c=(1,1,0,2,2,3)$}\\{\small $t_1+t_2+2t_4+2t_5+3t_6$}\\{\small ${}\ge (\lambda+s_1\nu-\mu)(\omega_1^\vee)$}
\end{minipage}\hfill
\begin{minipage}[t]{0.48\textwidth}\centering
\begin{tikzpicture}[scale=1.0]
\fill[wt] (0,1.732) circle (2.2pt);
\fill[wt] (1.5,0.866) circle (2.2pt);
\fill[wt] (0.5,0.866) circle (2.2pt);
\fill[wt] (-0.5,0.866) circle (2.2pt);
\fill[wt] (1,0) circle (2.2pt);
\fill[wt] (-1.5,0.866) circle (2.2pt);
\fill[wt] (0,0) circle (2.2pt);
\draw[wt,thin] (0,0) circle (4.6pt);
\fill[wt] (1.5,-0.866) circle (2.2pt);
\fill[wt] (-1,0) circle (2.2pt);
\fill[wt] (0.5,-0.866) circle (2.2pt);
\fill[wt] (-0.5,-0.866) circle (2.2pt);
\fill[wt] (-1.5,-0.866) circle (2.2pt);
\fill[wt] (0,-1.732) circle (2.2pt);
\draw[trailstep,ione] (0,1.732) to[bend left=14] (1.5,0.866);
\node[lab,text=ione] at (0.891,1.543) {1};
\draw[trailstep,itwo] (1.5,0.866) to[bend left=14] (-0.5,0.866);
\node[lab,text=itwo] at (0.5,0.566) {2};
\draw[trailstep,itwo] (-0.5,0.866) to[bend left=14] (-1.5,0.866);
\node[lab,text=itwo] at (-1,0.636) {4};
\draw[trailstep,ione] (-1.5,0.866) to[bend left=14] (1.5,-0.866);
\node[lab,text=ione] at (0.201,0.349) {5};
\draw[trailstep,itwo] (1.5,-0.866) to[bend left=14] (-1.5,-0.866);
\node[lab,text=itwo] at (0,-1.236) {6};
\draw[thick] (0,1.732) circle (7pt);
\draw[thick] (-1.66,-1.026) rectangle (-1.34,-0.706);
\end{tikzpicture}
\\[2pt]{\small $\pi_{4}$: $c=(1,2,0,1,2,3)$}\\{\small $t_1+2t_2+t_4+2t_5+3t_6$}\\{\small ${}\ge (\lambda+s_1\nu-\mu)(\omega_1^\vee)$}
\end{minipage}

\bigskip

\begin{minipage}[t]{0.48\textwidth}\centering
\begin{tikzpicture}[scale=1.0]
\fill[wt] (0,1.732) circle (2.2pt);
\fill[wt] (1.5,0.866) circle (2.2pt);
\fill[wt] (0.5,0.866) circle (2.2pt);
\fill[wt] (-0.5,0.866) circle (2.2pt);
\fill[wt] (1,0) circle (2.2pt);
\fill[wt] (-1.5,0.866) circle (2.2pt);
\fill[wt] (0,0) circle (2.2pt);
\draw[wt,thin] (0,0) circle (4.6pt);
\fill[wt] (1.5,-0.866) circle (2.2pt);
\fill[wt] (-1,0) circle (2.2pt);
\fill[wt] (0.5,-0.866) circle (2.2pt);
\fill[wt] (-0.5,-0.866) circle (2.2pt);
\fill[wt] (-1.5,-0.866) circle (2.2pt);
\fill[wt] (0,-1.732) circle (2.2pt);
\draw[trailstep,ione] (0,1.732) to[bend left=14] (1.5,0.866);
\node[lab,text=ione] at (0.891,1.543) {1};
\draw[trailstep,itwo] (1.5,0.866) to[bend left=14] (-1.5,0.866);
\node[lab,text=itwo] at (0,0.496) {2};
\draw[trailstep,ione] (-1.5,0.866) to[bend left=14] (1.5,-0.866);
\node[lab,text=ione] at (0.201,0.349) {5};
\draw[trailstep,itwo] (1.5,-0.866) to[bend left=14] (-1.5,-0.866);
\node[lab,text=itwo] at (0,-1.236) {6};
\draw[thick] (0,1.732) circle (7pt);
\draw[thick] (-1.66,-1.026) rectangle (-1.34,-0.706);
\end{tikzpicture}
\\[2pt]{\small $\pi_{5}$: $c=(1,3,0,0,2,3)$}\\{\small $t_1+3t_2+2t_5+3t_6$}\\{\small ${}\ge (\lambda+s_1\nu-\mu)(\omega_1^\vee)$}
\end{minipage}\hfill
\begin{minipage}[t]{0.48\textwidth}\centering
\begin{tikzpicture}[scale=1.0]
\fill[wt] (0,1.732) circle (2.2pt);
\fill[wt] (1.5,0.866) circle (2.2pt);
\fill[wt] (0.5,0.866) circle (2.2pt);
\fill[wt] (-0.5,0.866) circle (2.2pt);
\fill[wt] (1,0) circle (2.2pt);
\fill[wt] (-1.5,0.866) circle (2.2pt);
\fill[wt] (0,0) circle (2.2pt);
\draw[wt,thin] (0,0) circle (4.6pt);
\fill[wt] (1.5,-0.866) circle (2.2pt);
\fill[wt] (-1,0) circle (2.2pt);
\fill[wt] (0.5,-0.866) circle (2.2pt);
\fill[wt] (-0.5,-0.866) circle (2.2pt);
\fill[wt] (-1.5,-0.866) circle (2.2pt);
\fill[wt] (0,-1.732) circle (2.2pt);
\draw[trailstep,ione] (0,1.732) to[bend left=14] (1.5,0.866);
\node[lab,text=ione] at (0.891,1.543) {1};
\draw[trailstep,itwo] (1.5,0.866) to[bend left=14] (-0.5,0.866);
\node[lab,text=itwo] at (0.5,0.566) {2};
\draw[trailstep,ione] (-0.5,0.866) to[bend left=14] (1,0);
\node[lab,text=ione] at (0.391,0.677) {3};
\draw[trailstep,itwo] (1,0) to[bend left=14] (0,0);
\node[lab,text=itwo] at (0.5,-0.23) {4};
\draw[trailstep,ione] (0,0) to[bend left=14] (1.5,-0.866);
\node[lab,text=ione] at (0.891,-0.189) {5};
\draw[trailstep,itwo] (1.5,-0.866) to[bend left=14] (-1.5,-0.866);
\node[lab,text=itwo] at (0,-1.236) {6};
\draw[thick] (0,1.732) circle (7pt);
\draw[thick] (-1.66,-1.026) rectangle (-1.34,-0.706);
\end{tikzpicture}
\\[2pt]{\small $\pi_{6}$: $c=(1,2,1,1,1,3)$}\\{\small $t_1+2t_2+t_3+t_4+t_5+3t_6$}\\{\small ${}\ge (\lambda+s_1\nu-\mu)(\omega_1^\vee)$}
\end{minipage}

\bigskip

\begin{minipage}[t]{0.48\textwidth}\centering
\begin{tikzpicture}[scale=1.0]
\fill[wt] (0,1.732) circle (2.2pt);
\fill[wt] (1.5,0.866) circle (2.2pt);
\fill[wt] (0.5,0.866) circle (2.2pt);
\fill[wt] (-0.5,0.866) circle (2.2pt);
\fill[wt] (1,0) circle (2.2pt);
\fill[wt] (-1.5,0.866) circle (2.2pt);
\fill[wt] (0,0) circle (2.2pt);
\draw[wt,thin] (0,0) circle (4.6pt);
\fill[wt] (1.5,-0.866) circle (2.2pt);
\fill[wt] (-1,0) circle (2.2pt);
\fill[wt] (0.5,-0.866) circle (2.2pt);
\fill[wt] (-0.5,-0.866) circle (2.2pt);
\fill[wt] (-1.5,-0.866) circle (2.2pt);
\fill[wt] (0,-1.732) circle (2.2pt);
\draw[trailstep,ione] (0,1.732) to[bend left=14] (1.5,0.866);
\node[lab,text=ione] at (0.891,1.543) {1};
\draw[trailstep,itwo] (1.5,0.866) to[bend left=14] (-0.5,0.866);
\node[lab,text=itwo] at (0.5,0.566) {2};
\draw[trailstep,ione] (-0.5,0.866) to[bend left=14] (1,0);
\node[lab,text=ione] at (0.391,0.677) {3};
\draw[trailstep,itwo] (1,0) to[bend left=14] (-1,0);
\node[lab,text=itwo] at (0,-0.3) {4};
\draw[trailstep,ione] (-1,0) to[bend left=14] (0.5,-0.866);
\node[lab,text=ione] at (-0.109,-0.189) {5};
\draw[trailstep,itwo] (0.5,-0.866) to[bend left=14] (-1.5,-0.866);
\node[lab,text=itwo] at (-0.5,-1.166) {6};
\draw[thick] (0,1.732) circle (7pt);
\draw[thick] (-1.66,-1.026) rectangle (-1.34,-0.706);
\end{tikzpicture}
\\[2pt]{\small $\pi_{7}$: $c=(1,2,1,2,1,2)$}\\{\small $t_1+2t_2+t_3+2t_4+t_5+2t_6$}\\{\small ${}\ge (\lambda+s_1\nu-\mu)(\omega_1^\vee)$}
\end{minipage}\hfill
\begin{minipage}[t]{0.48\textwidth}\centering
\begin{tikzpicture}[scale=1.0]
\fill[wt] (0,1.732) circle (2.2pt);
\fill[wt] (1.5,0.866) circle (2.2pt);
\fill[wt] (0.5,0.866) circle (2.2pt);
\fill[wt] (-0.5,0.866) circle (2.2pt);
\fill[wt] (1,0) circle (2.2pt);
\fill[wt] (-1.5,0.866) circle (2.2pt);
\fill[wt] (0,0) circle (2.2pt);
\draw[wt,thin] (0,0) circle (4.6pt);
\fill[wt] (1.5,-0.866) circle (2.2pt);
\fill[wt] (-1,0) circle (2.2pt);
\fill[wt] (0.5,-0.866) circle (2.2pt);
\fill[wt] (-0.5,-0.866) circle (2.2pt);
\fill[wt] (-1.5,-0.866) circle (2.2pt);
\fill[wt] (0,-1.732) circle (2.2pt);
\draw[trailstep,ione] (0,1.732) to[bend left=14] (1.5,0.866);
\node[lab,text=ione] at (0.891,1.543) {1};
\draw[trailstep,itwo] (1.5,0.866) to[bend left=14] (-1.5,0.866);
\node[lab,text=itwo] at (0,0.496) {2};
\draw[trailstep,ione] (-1.5,0.866) to[bend left=14] (0,0);
\node[lab,text=ione] at (-0.609,0.677) {3};
\draw[trailstep,ione] (0,0) to[bend left=14] (1.5,-0.866);
\node[lab,text=ione] at (0.891,-0.189) {5};
\draw[trailstep,itwo] (1.5,-0.866) to[bend left=14] (-1.5,-0.866);
\node[lab,text=itwo] at (0,-1.236) {6};
\draw[thick] (0,1.732) circle (7pt);
\draw[thick] (-1.66,-1.026) rectangle (-1.34,-0.706);
\end{tikzpicture}
\\[2pt]{\small $\pi_{8}$: $c=(1,3,1,0,1,3)$}\\{\small $t_1+3t_2+t_3+t_5+3t_6$}\\{\small ${}\ge (\lambda+s_1\nu-\mu)(\omega_1^\vee)$}
\end{minipage}

\bigskip

\begin{minipage}[t]{0.48\textwidth}\centering
\begin{tikzpicture}[scale=1.0]
\fill[wt] (0,1.732) circle (2.2pt);
\fill[wt] (1.5,0.866) circle (2.2pt);
\fill[wt] (0.5,0.866) circle (2.2pt);
\fill[wt] (-0.5,0.866) circle (2.2pt);
\fill[wt] (1,0) circle (2.2pt);
\fill[wt] (-1.5,0.866) circle (2.2pt);
\fill[wt] (0,0) circle (2.2pt);
\draw[wt,thin] (0,0) circle (4.6pt);
\fill[wt] (1.5,-0.866) circle (2.2pt);
\fill[wt] (-1,0) circle (2.2pt);
\fill[wt] (0.5,-0.866) circle (2.2pt);
\fill[wt] (-0.5,-0.866) circle (2.2pt);
\fill[wt] (-1.5,-0.866) circle (2.2pt);
\fill[wt] (0,-1.732) circle (2.2pt);
\draw[trailstep,ione] (0,1.732) to[bend left=14] (1.5,0.866);
\node[lab,text=ione] at (0.891,1.543) {1};
\draw[trailstep,itwo] (1.5,0.866) to[bend left=14] (-1.5,0.866);
\node[lab,text=itwo] at (0,0.496) {2};
\draw[trailstep,ione] (-1.5,0.866) to[bend left=14] (0,0);
\node[lab,text=ione] at (-0.609,0.677) {3};
\draw[trailstep,itwo] (0,0) to[bend left=14] (-1,0);
\node[lab,text=itwo] at (-0.5,-0.23) {4};
\draw[trailstep,ione] (-1,0) to[bend left=14] (0.5,-0.866);
\node[lab,text=ione] at (-0.109,-0.189) {5};
\draw[trailstep,itwo] (0.5,-0.866) to[bend left=14] (-1.5,-0.866);
\node[lab,text=itwo] at (-0.5,-1.166) {6};
\draw[thick] (0,1.732) circle (7pt);
\draw[thick] (-1.66,-1.026) rectangle (-1.34,-0.706);
\end{tikzpicture}
\\[2pt]{\small $\pi_{9}$: $c=(1,3,1,1,1,2)$}\\{\small $t_1+3t_2+t_3+t_4+t_5+2t_6$}\\{\small ${}\ge (\lambda+s_1\nu-\mu)(\omega_1^\vee)$}
\end{minipage}\hfill
\begin{minipage}[t]{0.48\textwidth}\centering
\begin{tikzpicture}[scale=1.0]
\fill[wt] (0,1.732) circle (2.2pt);
\fill[wt] (1.5,0.866) circle (2.2pt);
\fill[wt] (0.5,0.866) circle (2.2pt);
\fill[wt] (-0.5,0.866) circle (2.2pt);
\fill[wt] (1,0) circle (2.2pt);
\fill[wt] (-1.5,0.866) circle (2.2pt);
\fill[wt] (0,0) circle (2.2pt);
\draw[wt,thin] (0,0) circle (4.6pt);
\fill[wt] (1.5,-0.866) circle (2.2pt);
\fill[wt] (-1,0) circle (2.2pt);
\fill[wt] (0.5,-0.866) circle (2.2pt);
\fill[wt] (-0.5,-0.866) circle (2.2pt);
\fill[wt] (-1.5,-0.866) circle (2.2pt);
\fill[wt] (0,-1.732) circle (2.2pt);
\draw[trailstep,ione] (0,1.732) to[bend left=14] (1.5,0.866);
\node[lab,text=ione] at (0.891,1.543) {1};
\draw[trailstep,itwo] (1.5,0.866) to[bend left=14] (-1.5,0.866);
\node[lab,text=itwo] at (0,0.496) {2};
\draw[trailstep,ione] (-1.5,0.866) to[bend left=14] (1.5,-0.866);
\node[lab,text=ione] at (0.201,0.349) {3};
\draw[trailstep,itwo] (1.5,-0.866) to[bend left=14] (-1.5,-0.866);
\node[lab,text=itwo] at (0,-1.236) {6};
\draw[thick] (0,1.732) circle (7pt);
\draw[thick] (-1.66,-1.026) rectangle (-1.34,-0.706);
\end{tikzpicture}
\\[2pt]{\small $\pi_{10}$: $c=(1,3,2,0,0,3)$}\\{\small $t_1+3t_2+2t_3+3t_6$}\\{\small ${}\ge (\lambda+s_1\nu-\mu)(\omega_1^\vee)$}
\end{minipage}

\bigskip

\begin{minipage}[t]{0.48\textwidth}\centering
\begin{tikzpicture}[scale=1.0]
\fill[wt] (0,1.732) circle (2.2pt);
\fill[wt] (1.5,0.866) circle (2.2pt);
\fill[wt] (0.5,0.866) circle (2.2pt);
\fill[wt] (-0.5,0.866) circle (2.2pt);
\fill[wt] (1,0) circle (2.2pt);
\fill[wt] (-1.5,0.866) circle (2.2pt);
\fill[wt] (0,0) circle (2.2pt);
\draw[wt,thin] (0,0) circle (4.6pt);
\fill[wt] (1.5,-0.866) circle (2.2pt);
\fill[wt] (-1,0) circle (2.2pt);
\fill[wt] (0.5,-0.866) circle (2.2pt);
\fill[wt] (-0.5,-0.866) circle (2.2pt);
\fill[wt] (-1.5,-0.866) circle (2.2pt);
\fill[wt] (0,-1.732) circle (2.2pt);
\draw[trailstep,ione] (0,1.732) to[bend left=14] (1.5,0.866);
\node[lab,text=ione] at (0.891,1.543) {1};
\draw[trailstep,itwo] (1.5,0.866) to[bend left=14] (-1.5,0.866);
\node[lab,text=itwo] at (0,0.496) {2};
\draw[trailstep,ione] (-1.5,0.866) to[bend left=14] (1.5,-0.866);
\node[lab,text=ione] at (0.201,0.349) {3};
\draw[trailstep,itwo] (1.5,-0.866) to[bend left=14] (0.5,-0.866);
\node[lab,text=itwo] at (1,-1.096) {4};
\draw[trailstep,itwo] (0.5,-0.866) to[bend left=14] (-1.5,-0.866);
\node[lab,text=itwo] at (-0.5,-1.166) {6};
\draw[thick] (0,1.732) circle (7pt);
\draw[thick] (-1.66,-1.026) rectangle (-1.34,-0.706);
\end{tikzpicture}
\\[2pt]{\small $\pi_{11}$: $c=(1,3,2,1,0,2)$}\\{\small $t_1+3t_2+2t_3+t_4+2t_6$}\\{\small ${}\ge (\lambda+s_1\nu-\mu)(\omega_1^\vee)$}
\end{minipage}\hfill
\begin{minipage}[t]{0.48\textwidth}\centering
\begin{tikzpicture}[scale=1.0]
\fill[wt] (0,1.732) circle (2.2pt);
\fill[wt] (1.5,0.866) circle (2.2pt);
\fill[wt] (0.5,0.866) circle (2.2pt);
\fill[wt] (-0.5,0.866) circle (2.2pt);
\fill[wt] (1,0) circle (2.2pt);
\fill[wt] (-1.5,0.866) circle (2.2pt);
\fill[wt] (0,0) circle (2.2pt);
\draw[wt,thin] (0,0) circle (4.6pt);
\fill[wt] (1.5,-0.866) circle (2.2pt);
\fill[wt] (-1,0) circle (2.2pt);
\fill[wt] (0.5,-0.866) circle (2.2pt);
\fill[wt] (-0.5,-0.866) circle (2.2pt);
\fill[wt] (-1.5,-0.866) circle (2.2pt);
\fill[wt] (0,-1.732) circle (2.2pt);
\draw[trailstep,ione] (0,1.732) to[bend left=14] (1.5,0.866);
\node[lab,text=ione] at (0.891,1.543) {1};
\draw[trailstep,itwo] (1.5,0.866) to[bend left=14] (-1.5,0.866);
\node[lab,text=itwo] at (0,0.496) {2};
\draw[trailstep,ione] (-1.5,0.866) to[bend left=14] (1.5,-0.866);
\node[lab,text=ione] at (0.201,0.349) {3};
\draw[trailstep,itwo] (1.5,-0.866) to[bend left=14] (-0.5,-0.866);
\node[lab,text=itwo] at (0.5,-1.166) {4};
\draw[trailstep,itwo] (-0.5,-0.866) to[bend left=14] (-1.5,-0.866);
\node[lab,text=itwo] at (-1,-1.096) {6};
\draw[thick] (0,1.732) circle (7pt);
\draw[thick] (-1.66,-1.026) rectangle (-1.34,-0.706);
\end{tikzpicture}
\\[2pt]{\small $\pi_{12}$: $c=(1,3,2,2,0,1)$}\\{\small $t_1+3t_2+2t_3+2t_4+t_6$}\\{\small ${}\ge (\lambda+s_1\nu-\mu)(\omega_1^\vee)$}
\end{minipage}

\bigskip

\begin{minipage}[t]{0.48\textwidth}\centering
\begin{tikzpicture}[scale=1.0]
\fill[wt] (0,1.732) circle (2.2pt);
\fill[wt] (1.5,0.866) circle (2.2pt);
\fill[wt] (0.5,0.866) circle (2.2pt);
\fill[wt] (-0.5,0.866) circle (2.2pt);
\fill[wt] (1,0) circle (2.2pt);
\fill[wt] (-1.5,0.866) circle (2.2pt);
\fill[wt] (0,0) circle (2.2pt);
\draw[wt,thin] (0,0) circle (4.6pt);
\fill[wt] (1.5,-0.866) circle (2.2pt);
\fill[wt] (-1,0) circle (2.2pt);
\fill[wt] (0.5,-0.866) circle (2.2pt);
\fill[wt] (-0.5,-0.866) circle (2.2pt);
\fill[wt] (-1.5,-0.866) circle (2.2pt);
\fill[wt] (0,-1.732) circle (2.2pt);
\draw[trailstep,ione] (0,1.732) to[bend left=14] (1.5,0.866);
\node[lab,text=ione] at (0.891,1.543) {1};
\draw[trailstep,itwo] (1.5,0.866) to[bend left=14] (-1.5,0.866);
\node[lab,text=itwo] at (0,0.496) {2};
\draw[trailstep,ione] (-1.5,0.866) to[bend left=14] (1.5,-0.866);
\node[lab,text=ione] at (0.201,0.349) {3};
\draw[trailstep,itwo] (1.5,-0.866) to[bend left=14] (-1.5,-0.866);
\node[lab,text=itwo] at (0,-1.236) {4};
\draw[thick] (0,1.732) circle (7pt);
\draw[thick] (-1.66,-1.026) rectangle (-1.34,-0.706);
\end{tikzpicture}
\\[2pt]{\small $\pi_{13}$: $c=(1,3,2,3,0,0)$}\\{\small $t_1+3t_2+2t_3+3t_4$}\\{\small ${}\ge (\lambda+s_1\nu-\mu)(\omega_1^\vee)$}
\end{minipage}
\end{center}
\subsubsection{$i=2$: $V_{\omega_2^\vee}$, from $\gamma=(1,2)$ to $\delta=(-1,-1)$ --- 1 $\i$-trail}
\begin{center}
\begin{minipage}[t]{0.48\textwidth}\centering
\begin{tikzpicture}[scale=1.7]
\fill[wt] (0.5,0.866) circle (2.2pt);
\fill[wt] (-0.5,0.866) circle (2.2pt);
\fill[wt] (1,0) circle (2.2pt);
\fill[wt] (0,0) circle (2.2pt);
\fill[wt] (-1,0) circle (2.2pt);
\fill[wt] (0.5,-0.866) circle (2.2pt);
\fill[wt] (-0.5,-0.866) circle (2.2pt);
\draw[trailstep,itwo] (0.5,0.866) to[bend left=14] (-0.5,0.866);
\node[lab,text=itwo] at (0,0.636) {2};
\draw[trailstep,ione] (-0.5,0.866) to[bend left=14] (1,0);
\node[lab,text=ione] at (0.391,0.677) {3};
\draw[trailstep,itwo] (1,0) to[bend left=14] (-1,0);
\node[lab,text=itwo] at (0,-0.3) {4};
\draw[trailstep,ione] (-1,0) to[bend left=14] (0.5,-0.866);
\node[lab,text=ione] at (-0.109,-0.189) {5};
\draw[thick] (0.5,0.866) circle (7pt);
\draw[thick] (0.34,-1.026) rectangle (0.66,-0.706);
\end{tikzpicture}
\\[2pt]{\small $\pi_{1}$: $c=(0,1,1,2,1,0)$}\\{\small $t_2+t_3+2t_4+t_5$}\\{\small ${}\ge (\lambda+s_2\nu-\mu)(\omega_2^\vee)$}
\end{minipage}
\end{center}
\clearpage

We summarize these results in the following table.
\begin{longtable}{lllll}
condition & $i$ & trail & $c(\pi)$ & inequality\\\hline
(3) & 1 & $\pi_{1}$ & $(0,3,2,3,1,0)$ & $3t_2+2t_3+3t_4+t_5\ge (s_1\lambda+\nu-\mu)(\omega_1^\vee)$\\
(3) & 2 & $\pi_{1}$ & $(0,0,1,2,1,1)$ & $t_3+2t_4+t_5+t_6\ge (s_2\lambda+\nu-\mu)(\omega_2^\vee)$\\
(3) & 2 & $\pi_{2}$ & $(1,0,0,2,1,1)$ & $t_1+2t_4+t_5+t_6\ge (s_2\lambda+\nu-\mu)(\omega_2^\vee)$\\
(3) & 2 & $\pi_{3}$ & $(1,1,0,1,1,1)$ & $t_1+t_2+t_4+t_5+t_6\ge (s_2\lambda+\nu-\mu)(\omega_2^\vee)$\\
(3) & 2 & $\pi_{4}$ & $(1,2,0,0,1,1)$ & $t_1+2t_2+t_5+t_6\ge (s_2\lambda+\nu-\mu)(\omega_2^\vee)$\\
(3) & 2 & $\pi_{5}$ & $(1,2,1,0,0,1)$ & $t_1+2t_2+t_3+t_6\ge (s_2\lambda+\nu-\mu)(\omega_2^\vee)$\\
(3) & 2 & $\pi_{6}$ & $(1,2,1,1,0,0)$ & $t_1+2t_2+t_3+t_4\ge (s_2\lambda+\nu-\mu)(\omega_2^\vee)$\\
(4) & 1 & $\pi_{1}$ & $(0,0,1,3,2,3)$ & $t_3+3t_4+2t_5+3t_6\ge (\lambda+s_1\nu-\mu)(\omega_1^\vee)$\\
(4) & 1 & $\pi_{2}$ & $(1,0,0,3,2,3)$ & $t_1+3t_4+2t_5+3t_6\ge (\lambda+s_1\nu-\mu)(\omega_1^\vee)$\\
(4) & 1 & $\pi_{3}$ & $(1,1,0,2,2,3)$ & $t_1+t_2+2t_4+2t_5+3t_6\ge (\lambda+s_1\nu-\mu)(\omega_1^\vee)$\\
(4) & 1 & $\pi_{4}$ & $(1,2,0,1,2,3)$ & $t_1+2t_2+t_4+2t_5+3t_6\ge (\lambda+s_1\nu-\mu)(\omega_1^\vee)$\\
(4) & 1 & $\pi_{5}$ & $(1,3,0,0,2,3)$ & $t_1+3t_2+2t_5+3t_6\ge (\lambda+s_1\nu-\mu)(\omega_1^\vee)$\\
(4) & 1 & $\pi_{6}$ & $(1,2,1,1,1,3)$ & $t_1+2t_2+t_3+t_4+t_5+3t_6\ge (\lambda+s_1\nu-\mu)(\omega_1^\vee)$\\
(4) & 1 & $\pi_{7}$ & $(1,2,1,2,1,2)$ & $t_1+2t_2+t_3+2t_4+t_5+2t_6\ge (\lambda+s_1\nu-\mu)(\omega_1^\vee)$\\
(4) & 1 & $\pi_{8}$ & $(1,3,1,0,1,3)$ & $t_1+3t_2+t_3+t_5+3t_6\ge (\lambda+s_1\nu-\mu)(\omega_1^\vee)$\\
(4) & 1 & $\pi_{9}$ & $(1,3,1,1,1,2)$ & $t_1+3t_2+t_3+t_4+t_5+2t_6\ge (\lambda+s_1\nu-\mu)(\omega_1^\vee)$\\
(4) & 1 & $\pi_{10}$ & $(1,3,2,0,0,3)$ & $t_1+3t_2+2t_3+3t_6\ge (\lambda+s_1\nu-\mu)(\omega_1^\vee)$\\
(4) & 1 & $\pi_{11}$ & $(1,3,2,1,0,2)$ & $t_1+3t_2+2t_3+t_4+2t_6\ge (\lambda+s_1\nu-\mu)(\omega_1^\vee)$\\
(4) & 1 & $\pi_{12}$ & $(1,3,2,2,0,1)$ & $t_1+3t_2+2t_3+2t_4+t_6\ge (\lambda+s_1\nu-\mu)(\omega_1^\vee)$\\
(4) & 1 & $\pi_{13}$ & $(1,3,2,3,0,0)$ & $t_1+3t_2+2t_3+3t_4\ge (\lambda+s_1\nu-\mu)(\omega_1^\vee)$\\
(4) & 2 & $\pi_{1}$ & $(0,1,1,2,1,0)$ & $t_2+t_3+2t_4+t_5\ge (\lambda+s_2\nu-\mu)(\omega_2^\vee)$\\
\end{longtable}
These inequalities, together with positivity and the equalities
\begin{equation}
    \sum_{k=1}^\ell t_ks_{i_1}\cdots s_{i_{k-1}}\alpha_{i_k}=\lambda+\nu-\mu
\end{equation}
define the BZ polytope presented in \cref{thm: explicit equations and inequalities for type G2}.

\section{Quasi-periods of BZ polytopes of type \texorpdfstring{$\G$}{G\unisubtwo}}
\label{sec: Quasi-periods of BZ polytopes of type G2}
              
We now show that the minimum quasi-period of every stretched Clebsch--Gordan coefficient for type~$\G$ divides $6$. Equivalently, we show that the minimum quasi-period of the Ehrhart quasi-polynomial of every type-$\G$ BZ~polytope divides $6$. Recall that every stretched Kostka number is a stretched Clebsch--Gordan coefficient. Thus, this result explains the quasi-period $6$ observed in \cite[Section~4]{BJK2025}.

As in \cite[Proposition~2.6]{DeLMcA2004} and \cite[Proposition 1.2]{DeLMcA2006}, our method will be to apply the theory of vector partition functions \cite{Stu1995} to the Steinberg multiplicity formula. We now recall this formula. Fix an ordering $(\beta_{1}, \dotsc, \beta_{m})$ of the set $\posroots$ of positive roots of $\g$. Define the \defing{Kostant partition function} $p \maps \weights \to \Znn$ by
\begin{equation*}
    p(\kappa) 
        \deftobe
        \#
        \setof{
            x \in \Znn^{m}
            \st
            \posroots x = \kappa
        }, 
\end{equation*}
where we write $\posroots x \deftobe \xi_{1}\beta_{1} + \dotsb + \xi_{m} \beta_{m}$ for all $x = (\xi_{1}, \dotsc, \xi_{m}) \in \Z^{m}$.  Let $\rho \deftobe \frac{1}{2}\sum_{i = 1}^{m} \beta_{i}$. The \defing{Steinberg multiplicity formula} \cite[Section 24.4]{Hum1972} states that
\begin{align*}
    \clnm
        &=
        \sum_{(\wa, \wb) \in \Wg \times \Wg}
        (-1)^{\wa\wb}
        p\bigparens{
            (
                \wa(\lambda) 
                +
                \wb(\nu)
                -
                \mu
            )
            +
            (
                \wa(\rho)
                +
                \wb(\rho) 
                - 
                2\rho
            )
        }
\end{align*}
for all dominant weights $\lambda, \mu, \nu \in \weights \cap \Wc$. We now use this result to prove \cref{thm: quasi-period at most 6}, which we repeat here for the reader's convenience.

\begin{theorem}
    Let $\g$ be a type-$\G$ Lie algebra, and let $\lambda$, $\mu$ and $\nu$ be dominant weights of~$\g$. Then the minimum quasi-period of the stretched Clebsch--Gordan coefficient $t \mapsto c_{t\lambda, t\nu}^{t\mu}$ divides $6$.
\end{theorem}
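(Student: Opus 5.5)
The plan is to combine the Steinberg multiplicity formula with the structure of the Kostant partition function as a vector partition function, following \cite[Proposition~2.6]{DeLMcA2004} and \cite[Proposition~1.2]{DeLMcA2006}.

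Replacing $(\lambda,\mu,\nu)$ by $(t\lambda,t\mu,t\nu)$ in the Steinberg formula gives
\begin{equation*}
    c_{t\lambda,t\nu}^{t\mu}
        =
        \sum_{(\wa,\wb)\in\Wg\times\Wg}(-1)^{\wa\wb}\,
        p\bigparens{t\kappa_{\wa,\wb}+\rho_{\wa,\wb}},
\end{equation*}
where $\kappa_{\wa,\wb}\deftobe\wa(\lambda)+\wb(\nu)-\mu$ and $\rho_{\wa,\wb}\deftobe\wa(\rho)+\wb(\rho)-2\rho$. Both are fixed elements of the root lattice. It therefore suffices to show that, for any fixed $\kappa,\rho'$ in the root lattice, the function $t\mapsto p(t\kappa+\rho')$ agrees for all sufficiently large $t$ with a quasipolynomial whose quasi-period divides $6$. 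Granting this, the left-hand side agrees for large $t$ with a quasipolynomial of quasi-period dividing $6$. By \cref{thm: ehrharts theorem} and \cref{thm: bz-polytopes}, the left-hand side is itself a quasipolynomial for all $t\in\Zp$. Two quasipolynomials that agree for all large $t$ coincide, since each constituent polynomial is determined by infinitely many values. Hence the minimum quasi-period of $t\mapsto c_{t\lambda,t\nu}^{t\mu}$ divides $6$.

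For the key claim, write $p(t\kappa+\rho')$ as the number of lattice points in
\begin{equation*}
    Q_t\deftobe\setof{x\in\R^{6}\st x\ge0,\ \posroots x=t\kappa+\rho'}.
\end{equation*}
In simple-root coordinates the positive roots of $\G$ are $(1,0)$, $(0,1)$, $(1,1)$, $(2,1)$, $(3,1)$, $(3,2)$. A routine check of all $2\times2$ minors shows that every nonsingular one has absolute value in $\setof{1,2,3}$. Every vertex of $Q_t$ has the form $x=B^{-1}(t\kappa+\rho')$ on a basis $B$ of columns, with all other coordinates zero. Hence every vertex coordinate is an integer-affine function of $t$ divided by $\det B$, which divides $6$.

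I would then use the general fact that counts of lattice points in a parametric polytope $\setof{x\ge0 \st Ax=tb+b_0}$ are eventually quasipolynomial in $t$. Sturmfels' theorem \cite{Stu1995} gives this: the vector partition function is a quasipolynomial on each chamber of the chamber complex, with periods dividing the least common multiple of the absolute values of the maximal minors. Applying it here gives an eventual quasi-period dividing $\lcm(1,2,3)=6$.

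The main obstacle is the chamber issue. The ray $t\kappa+\rho'$ may run along a wall of the chamber complex, or the shift $\rho'$ may push it across walls, so a single chamber's quasipolynomial need not apply directly. I would handle this as follows. For large $t$, the combinatorial type of $Q_t$ is constant: which bases are feasible is decided by the signs of $B^{-1}(t\kappa+\rho')$, and these signs stabilize as $t$ grows. Once the type is fixed, I would apply a Brion/Barvinok-type decomposition into vertex cones. Each vertex translates by $t$ times a vector with denominator dividing $\det B$. Each vertex-cone contribution is therefore a quasipolynomial in $t$ with period dividing $\det B$, and hence dividing $6$. If this direct argument becomes delicate, I would fall back on an explicit verification using the known closed form of the $\G$ Kostant partition function on each chamber.
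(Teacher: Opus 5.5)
Your proposal is correct and follows essentially the paper's route: Steinberg's formula, the Kostant partition function viewed as a vector partition function, and Sturmfels' chamber quasipolynomiality with periods dividing the least common multiple of the nonzero minors. You compute those minors in simple-root coordinates as a $2\times 6$ matrix, whereas the paper uses the $3\times 6$ matrix in the standard basis.

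You are also more careful than the paper on two points it leaves implicit. First, the paper simply asserts that $[\kappa_t]$ stays in a fixed chamber for $t\gg0$. In rank $2$ this holds for closed chambers, since $\det[\kappa, t\kappa+\rho']=\det[\kappa,\rho']$ is constant in $t$, so the wall-crossing worry is milder than you feared. Second, the paper does not spell out your Ehrhart-theorem step, which passes from eventual quasipolynomiality to quasipolynomiality for all $t\in\Zp$.
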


\begin{proof}
    Writing $\posrootsmatrix$ for the matrix whose columns are the positive roots of~$\g$ written in the standard basis \cite[p.~65]{Hum1972}, we have that
    \begin{equation*}
        \posrootsmatrix
            = 
            \begin{bmatrix*}[r]
                 1 &  0 &  1 &  2 & -1 & -1 \\
                -1 &  1 &  0 & -1 &  2 & -1 \\
                 0 & -1 & -1 & -1 & -1 &  2
            \end{bmatrix*}.
    \end{equation*}
    By Steinberg's multiplicity formula, the stretched Clebsch--Gordan coefficient $t \mapsto c_{t\lambda, t\nu}^{t\mu}$ is an alternating sum of functions of the form
    \begin{equation}\label{function: Steinberg term}
        t
            \mapsto
            \#
            \setof{
                x \in \Znn^{6}
                \st
                \posrootsmatrix x = [\kappa_{t}]
            },
    \end{equation}
    where $\kappa_{t} \deftobe t\bigparens{\wa(\lambda) + \wb(\nu) - \mu} + \bigparens{\wa(\rho) + \wb(\rho) - 2\rho}$ for fixed $(\wa, \wb) \in \Wg \times \Wg$ and $\lambda, \mu, \nu \in \Wc$, and $[\kappa_{t}]$ denotes the column vector of $\kappa_{t}$ in the standard basis. Note that some fixed chamber of the vector partition function for $\posrootsmatrix$ contains $[\kappa_{t}]$ for all $t \gg 0$, provided that the function on the right-hand side of~\eqref{function: Steinberg term} ever takes a nonzero value. Hence, the function \labelcref{function: Steinberg term} is a fixed quasipolynomial function of~$t$ for all $t \gg 0$, with a minimum quasi-period dividing a quasi-period of the vector partition function of $\posrootsmatrix$ on that chamber. This latter quasi-period in turn divides the least common multiple of the nonzero minors of $\posrootsmatrix$. Finally, one checks that the nonzero minors of $\posrootsmatrix$ are $\pm 1, \pm 2, \pm 3$. Therefore, the minimum quasi-period of $t \mapsto c_{t\lambda, t\nu}^{t\mu}$ is in $\setof{1,2,3,6}$.
\end{proof}

All the possible minimum quasi-periods in \cref{thm: quasi-period at most 6} arise among the stretched Clebsch--Gordan coefficients. An example with minimum quasi-period $6$ appeared in \cref{exm: counterexample to ehrhart positivity} above. The following are examples of the remaining possibilities.

\begin{example}
    We write all dominant weights in the basis of fundamental weights. For each triple of dominant weights, $\PP$ is the set of points $(t_{1}, \dotsc, t_{6}) \in \R^{6}$ satisfying the system of linear equations and inequalities in \cref{thm: explicit equations and inequalities for type G2}, so that $\ehr{\PP}(t) = c_{t\lambda, t\nu}^{t\mu}$ (for $t \in \Zp$) is the corresponding stretched Clebsch--Gordan coefficient by \cref{thm: bz-polytopes}.
    \begin{enumerate}
        \item 
        Let $\lambda \deftobe \nu \deftobe (3, 2)$ and $\mu \deftobe (0, 2)$. Then
        \begin{align*}
            \PP
                =
                \conv
                \big\lbrace
                    & (3, 5, 12, 5, 3, 0),\, (3, 3, 8, 5, 7, 2),\, (3, \tfrac{7}{2}, 9, \tfrac{11}{2}, 6, 1), \\
                    & (3, \tfrac{11}{3}, 8, 5, 7, \tfrac{4}{3}),\, (3, 4, 9, 4, 6, 2) 
                \big\rbrace
        \end{align*}
        and
        \begin{equation*}
            \ehr{\PP}(t)
                =
                \tfrac{2}{3} t^3 + \tfrac{3}{2}t^2 + \tfrac{11}{6}t + 1.
        \end{equation*}

        \item
        Let $\lambda \deftobe \mu \deftobe \nu \deftobe (0,1)$. Then
        \begin{equation*}
            \PP 
                = 
                \conv
                \setof{
                    (0, 1, 3, 1, 0, 0),\, 
                    (0, \tfrac{1}{2}, \tfrac{3}{2}, 1, \tfrac{3}{2}, \tfrac{1}{2})
                }
        \end{equation*}
        and
        \begin{equation*}
            \ehr{\PP}(t)
            =
                \begin{cases*}
                    \tfrac{1}{2}t + 1            & if $t \cong 0 \mod 2$, \\[1ex]
                    \tfrac{1}{2}t + \tfrac{1}{2} & if $t \cong 1 \mod 2$.
                \end{cases*}
        \end{equation*}
        
        \item 
        Let $\lambda \deftobe (2,0)$ and $\mu \deftobe \nu \deftobe (0,1)$. Then
        \begin{equation*}
            \PP 
                = 
                \conv
                \setof{
                    (2, 1, 2, 1, 0, 0),\,
                    (2, \tfrac{4}{3}, 2, \tfrac{2}{3}, 0, 0)
                }
        \end{equation*}
        and
        \begin{equation*}
            \ehr{\PP}(t)
                =
                \begin{cases*}
                    \tfrac{1}{3}t + 1 & if $t \cong 0 \mod 3$, \\[1ex]
                    \tfrac{1}{3}t + \tfrac{2}{3} & if $t \cong 1 \mod 3$, \\[1ex]
                    \tfrac{1}{3}t + \tfrac{1}{3} & if $t \cong 2 \mod 3$.
                \end{cases*}
        \end{equation*}
    \end{enumerate}
\end{example}

\section{Denominators of vertices of BZ polytopes of type \texorpdfstring{$\G$}{G\unisubtwo}}
\label{sec: denominators of vertices}

In this section, we describe an algorithm for computing the denominators that the vertices of Berenstein--Zelevinsky polytopes $\bzlmnc$ may have. Recall that, for a rational polytope $\PP \subset \R^{n}$, the denominator of a vertex $v \in \vertx{\PP}$ is $\den(v) \deftobe \min\setof{d \in \Zp \st dv \in \Z^{n}}$, and the denominator of $\PP$ itself is $\den(\PP) \deftobe \min\setof{d \in \Zp \st \vertx(d\PP) \subset \Z^{n}} = \lcm \setof{\den(v) \st v \in \vertx{\PP}}$.

An implementation of our algorithm in \sagemath{} (see Appendix~\ref{appendix: denominators of vertices code}) establishes \cref{thm: possible denominators - intro version}, which provides the denominators of all vertices of all BZ polytopes $\bzlmnc$ as defined in \cref{defn: c-vec polytope} with $\i = (1,2,1,2,1,2)$. We repeat \cref{thm: possible denominators - intro version} for the convenience of the reader:

\begin{theorem}\label{thm: possible denominators}
    Let $\g$ be a type-$\G$ Lie algebra, and let $\lambda$, $\mu$, and $\nu$ be dominant weights of~$\g$. Then $\den(v) \in \setof{1,2,3,4,5,6,9}$ for every vertex $v$ of $\bzlmnc$, and hence the denominator of $\bzlmnc$ divides $180$.
\end{theorem}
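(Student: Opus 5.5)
The plan is to treat the whole family $\setof{\bzlmnc}$ as a parametric polyhedron
\[
    \PP_{b} \deftobe \setof{ t \in \R^{6} \st At \le Bb,\ Ct = Db },
\]
where $b = (\lambda_{1},\lambda_{2},\mu_{1},\mu_{2},\nu_{1},\nu_{2}) \in \Znn^{6}$ and the integer matrices $A,B,C,D$ are read off from the explicit system in \cref{thm: explicit equations and inequalities for type G2}. A vertex $v$ of $\PP_{b}$ is the unique solution of a square system $\tilde A v = \tilde B b$, obtained by selecting the two equality rows together with four linearly independent inequality rows made tight. For each basis $\tilde A$ we can therefore write $v = \tilde A^{-1}\tilde B b$, so $v$ is an integer-linear function of $b$ divided by $\det\tilde A$.

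First, enumerate all choices of four inequality rows that, together with the equalities, give an invertible $6\times6$ matrix $\tilde A$. For each basis, compute the rational matrix $M = \tilde A^{-1}\tilde B$. The set of parameters $b$ for which $Mb$ is actually a vertex of $\PP_{b}$ is the feasibility cone
\[
    K_{\tilde A} = \setof{ b \ge 0 \st AMb \le Bb }.
\]
This is a rational polyhedral cone, and for $b \in K_{\tilde A} \cap \Z^{6}$ the point $Mb$ is a vertex, because it is feasible and its tight rows have full rank.

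The denominator of $Mb$ depends only on $b$ modulo $N \deftobe \den(M)$. Hence
\[
    \setof{\den(Mb) \st b \in K_{\tilde A} \cap \Z^{6}}
\]
is determined by which residue classes in $(\Z/N)^{6}$ meet $K_{\tilde A}$. If $K_{\tilde A}$ is full-dimensional in $\R^{6}$, every residue class meets it, since a full-dimensional cone contains translates of arbitrarily large cubes. If $K_{\tilde A}$ is lower-dimensional, I would compute a lattice basis of $\spn(K_{\tilde A}) \cap \Z^{6}$, for example via Hermite normal form, and restrict to the relative interior in those coordinates. Throughout, I would apply the standard observation that the attainable residues are exactly the images of $\operatorname{aff}(K_{\tilde A}) \cap \Z^{6}$; this uses the relative-interior argument on each face of $K_{\tilde A}$, since some vertices only occur on boundary strata.

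The result is then obtained by taking the union of the attainable denominators over all bases and checking that it is $\setof{1,2,3,4,5,6,9}$, whose least common multiple is $180$. The second assertion follows immediately, since $\den(\bzlmnc) = \lcm\setof{\den(v)}$ divides $\lcm\setof{1,2,3,4,5,6,9} = 180$.

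I expect the main obstacle to be correctness on degenerate strata. A basis may yield a vertex only for $b$ in a proper face of $K_{\tilde A}$, and the lattice points in that face may avoid certain residues mod $N$, so the full-dimensional argument can overstate the set of denominators. I would handle this by enumerating the faces of each $K_{\tilde A}$, computing the saturated lattice of each face's linear span, and enumerating the residues of $M$ applied to that lattice modulo $N$; this is a finite computation. The count of candidate bases is at most $\binom{m}{4}$, where $m$ is the number of inequality rows (about 30), so the enumeration is feasible in SageMath.
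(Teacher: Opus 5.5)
Your proposal is correct and is essentially the paper's argument: the paper likewise enumerates the index sets $I$ with $M_I$ invertible, forms $X_I$ and the feasibility cone $K_I$, and reads off the attainable denominators from the lattice $L_I = \spn_{\R}(K_I) \cap \Z^{k}$. The main difference is that, instead of enumerating residues, the paper observes that these denominators are exactly the orders of elements of the finite abelian group $X_I L_I/(X_I L_I \cap \Z^{n})$, namely $\Div(d_I)$ with $d_I = \den(X_I Z_I)$; your extra pass over the faces of each cone is harmless but unnecessary, because every lattice point of $K_I$ (boundary included) already yields a vertex $X_I t$ and the relative interior of $K_I$ meets every fiber of $L_I \to X_I L_I/(X_I L_I \cap \Z^{n})$, so no face can contribute a denominator that the whole cone does not.
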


The following example shows that in fact all possible denominators of vertices of such BZ polytopes appear as denominators of the \emph{coordinates} of the vertices of a \emph{single} such BZ polytope. In particular, the upper bound of $180$ on the denominators of such BZ polytopes is tight.

\begin{example}\label{exm: denominator 180}
    It follows from \cref{thm: possible denominators} that $180$ is an upper bound on the maximum possible denominator of the BZ polytope defined in \cref{defn: c-vec polytope} with $\i = (1,2,1,2,1,2)$. The following example shows that this upper bound is sharp. Let $\lambda \deftobe (2, 1)$, $\mu \deftobe (2, 1)$, and $\nu \deftobe (1, 1)$ in the basis of fundamental weights. A computation in \sagemath{} reveals that the vertices of $\bzlmnc$ in this case are as follows:
    \begin{equation*}
        \begin{array}{@{}l@{\quad}l@{\quad}l@{\quad}l@{}}
            (0,1,3,\tfrac53,2,\tfrac13),
            & (0,1,3,\tfrac43,2,\tfrac23),
            & (0,\tfrac56,\tfrac52,\tfrac53,\tfrac52,\tfrac12),
            & (1,2,4,1,0,0),
            \\[0.8em]
            (1,1,3,2,1,0),
            & (\tfrac45,\tfrac45,\tfrac{12}5,\tfrac85,\tfrac95,\tfrac35),
            & (\tfrac53,\tfrac53,\tfrac53,\tfrac{10}9,\tfrac53,\tfrac29),
            & (2,2,3,1,0,0),
            \\[0.8em]
            (2,2,2,1,1,0),
            & (2,2,2,\tfrac23,1,\tfrac13),
            & (2,\tfrac73,3,\tfrac23,0,0),
            & (\tfrac12,\tfrac34,\tfrac94,\tfrac32,\tfrac94,\tfrac34),
            \\[0.8em]
            (\tfrac12,1,3,\tfrac32,\tfrac32,\tfrac12),
            & (1,1,2,\tfrac43,2,\tfrac23),
            & (\tfrac13,\tfrac43,\tfrac{11}3,\tfrac43,1,\tfrac13),
            & (\tfrac12,\tfrac32,\tfrac72,\tfrac32,1,0),
            \\[0.8em]
            (\tfrac12,\tfrac32,4,\tfrac32,\tfrac12,0),
            & (\tfrac13,\tfrac43,\tfrac{11}3,\tfrac53,1,0)
        \end{array}
    \end{equation*}
    Indeed, every possible denominator in \cref{thm: possible denominators} appears among these vertices. Hence, $\bzlmnc$ realizes the maximum possible denominator of such BZ polytopes, namely $180$.
\end{example}

We now describe the algorithm used to prove \cref{thm: possible denominators}. The Ber\-en\-stein--Zelevinsky polytopes $\bzlmnc$ constitute a family of polytopes para\-metrized by nonnegative integral vectors
\begin{equation*}
    {(\lambda_{1}, \lambda_{2}, \mu_{1}, \mu_{2}, \nu_{1}, \nu_{2}) \in \Znn^{6}}.
\end{equation*}
(Recall that, in \cref{thm: explicit equations and inequalities for type G2}, the dominant weights $\lambda, \mu, \nu$ are given in the basis of fundamental weights.)
To describe our algorithm in the general case, fix integral matrices $A, B, C, D$ with $A$~being~$s \times n$, $B$~being~$s \times k$, $C$~being~$r \times n$, and $D$~being~$r \times k$. Assume that $C$ has full row rank (as the corresponding matrix does in the case of the BZ polytopes). Let
\begin{equation*}
    \PP_{t}
        \deftobe
        \setof{
            x \in \R^{n}
            \st
            Ax \le Bt,\,
            Cx = Dt
        } 
        \qquad \text{for all $t \in \Z^{k}$ with $t \ge 0$},
\end{equation*}
and let $\FF \deftobe \setof{\PP_{t} \st t \in \Znn^{k}}$ and $\vertx(\FF) \deftobe \bigcup_{\PP \in \FF} \vertx(\PP)$. We seek the set 
\begin{equation}\label{eq: set of denominators of a family of polyhedra}
    \DD
        \deftobe
        \setof{
            \den(v) 
            \st 
            \text{%
                $v \in \vertx(\FF)$%
            }
        }.
\end{equation}

Our approach builds on
\cite[Section~4.1]{VSBLB2007}
and
\cite{BBDKV2019}, especially, in the latter, their Definition~2.9 and Equation~(13), and the discussion following their Definition~2.25.
The treatment in \cite{BBDKV2019} applies to less-constrained families of the form $\setof{\PP_{(b,d)}}$ with 
\begin{equation*}
    \PP_{(b, d)}
        \deftobe
        \setof{
            x \in \R^{n}
            \st
            Ax \le b,\,
            Cx = d
        } 
        \; \text{for arbitrary real right-hand sides $b, d$}.
\end{equation*}
We adapt their methods to find the denominators of the vertices of just those polyhedra of the form $\PP_{t} = \PP_{(Bt, Dt)}$ with $t \in \Z^{k}$ nonnegative. 

For each subset $I \subset [s]$, let $A_{I}$, respectively $B_{I}$, be the submatrix of $A$, respectively $B$, consisting of the rows indexed by the elements of $I$, and let 
\begin{equation*}
    M_{I}
        \deftobe
        \begin{bmatrix*}[c]
            C \\
            A_{I}  
        \end{bmatrix*}.
\end{equation*}
Let $\II \deftobe \setof{I \subset [s] \st \text{$M_{I}$ is invertible}}$. For all $I \in \II$, let
\begin{equation*}
    X_{I}
        \deftobe
        M_{I}^{-1}
        \begin{bmatrix*}[c]
            D \\
            B_{I} 
        \end{bmatrix*}.
\end{equation*}
A point $x \in \R^{n}$ is in $\vertx(\FF)$ if and only if $x \in \PP_{t}$ and $x = X_{I}t$ for some $t \in \Znn^{k}$ and $I \in \II$. The condition that $X_{I}t \in \PP_{t}$ is equivalent to $(B - AX_{I})t \ge 0$. Conversely, if $I \in \II$, then $X_{I}t$ is a vertex of~$\PP_{t}$ for all $t \in K_{I} \cap \Z^{k}$, where $K_{I}$ is the rational polyhedral cone
\begin{equation*}
    K_{I}
        \deftobe
        \setof{t \in \R^{k} \st \text{$t \ge 0 $ and $(B - AX_{I})t \ge 0$}}.
\end{equation*}
In other words, putting $S_{I} \deftobe K_{I} \cap \Z^{k}$,
\begin{equation*}
    \vertx(\FF) 
        = 
        \bigcup_{I \in \II}
        \setof{X_{I}t \st t \in S_{I}},
\end{equation*}
and so 
\begin{equation*}
    \DD
        =
        \bigcup_{I \in \II}
        \setof{\den(X_{I} t) \st t \in S_{I}}.
\end{equation*}

Now, for all $I \in \II$, let $L_{I} \deftobe \spn_{\R}(K_{I}) \cap \Z^{k}$, and consider the denominator of a point of the form $X_{I} t \in \Q^{n}$ with $t \in L_{I}$. This denominator is the order of the element represented by~$X_{I}t$ in the finite abelian group $G_{I} \deftobe X_{I}L_{I}/(X_{I}L_{I} \cap \Z^{n})$. The set of orders of elements of a finite abelian group is the set of divisors of that group's exponent. Thus, $\setof{\den(X_{I}t) \st t \in L_{I}}$ is the set of divisors of the exponent of $G_{I}$. This exponent, in turn, is the denominator~$d_{I}$ of the matrix $X_{I}Z_{I}$ (\emph{i.e.}, the least positive integer $d_{I}$ such that $d_{I}X_{I}Z_{I}$ is integral), where $Z_{I}$ is a matrix whose columns are a $\Z$-basis of $L_{I}$ (setting $d_{I} \deftobe 1$ if $L_{I} = \setof{0}$).  Thus,
\begin{equation*}
    \setof{
        \den(X_{I}t)
        \st
        t \in L_{I}
    }
        = 
        \Div(d_{I}),
\end{equation*}
where $\Div(d_{I})$ denotes the set of divisors of $d_{I}$.

Finally, we claim that $\setof{\den(X_{I}t) \st t \in L_{I}} = \setof{\den(X_{I}t) \st t \in S_{I}}$. This is the case because the cone $K_{I}$ has nonempty interior relative to its span, and $X_{I}$ is rational, so the semigroup~$S_{I}$ contains an element of every fiber of the composition $L_{I} \to X_{I} L_{I} \to X_{I} L_{I}/(X_{I} L_{I} \cap \Z^{n})$.
Therefore,
\begin{equation*}
    \DD
        =
        \bigcup_{I \in \II}
        \Div(d_{I}).
\end{equation*}

The preceding remarks establish the following algorithm for computing the set $\DD$. (We continue to use the notation fixed above.)
\begin{enumerate}
    \item 
    For each $I \in \II$,
    \begin{enumerate}
        \item
        compute a rational matrix $N_{I}$ such that $\ker(N_{I}) = \spn_{\R}(K_{I})$;
        
        \item 
        compute a $\Z$-basis of the lattice $L_{I} \deftobe \ker(N_{I}) \cap \Z^{k}$, and write this $\Z$-basis as the columns of a matrix $Z_{I}$;
        
        \item 
        let $d_{I} \deftobe \den(X_{I} Z_{I})$ (with $d_{I} \deftobe 1$ if $L_{I} = \setof{0}$).
    \end{enumerate}
    
    \item 
    Return
    \begin{equation*}
        \DD
            =
            \bigcup_{I \in \II}
            \Div(d_{I}).
    \end{equation*}
\end{enumerate}

\appendix

\section{SageMath code for finding all denominators of all BZ polytopes}
\label{appendix: denominators of vertices code}

The code below is an implementation in \sagemath{} of the algorithm described in \cref{sec: denominators of vertices}. In particular, the code defines a function that returns the set of all denominators of vertices of the polyhedra in a parametrized family $\setof{\PP_{t} \st t \in \Znn^{k}}$, where
\begin{equation*}
    \PP_{t}
        \deftobe
        \setof{
            x \in \R^{n}
            \st
            Ax \le Bt,\,
            Cx = Dt
        } 
        \qquad \text{for all $t \in \Z^{k}$ with $t \ge 0$}.
\end{equation*}
Also included is code that applies this function to the family of BZ polytopes $\bzlmnc$ defined in \cref{defn: c-vec polytope} with $\i = (1,2,1,2,1,2)$. The output of this code establishes \cref{thm: possible denominators}.

\begin{lstlisting}[
    style=sagecode,
    caption={SageMath code for computing possible vertex denominators},
    label={lst: G2 vertex denominators}
]
"""Exact vertex-denominator spectrum for the type-G2 BZ polytope family.

Run with SageMath (for example: sage -python G2_vertex_denominators_appendix_builtin.py).
All computations are exact; no floating-point arithmetic is used.
"""

from sage.all import *
from itertools import combinations


def possible_denominators(A, B, C, D):
    """Return the possible denominators of vertices of

        P_t = {x : A*x <= B*t, C*x = D*t},

    where t ranges over nonnegative integral vectors.
    """
    s, n = A.dimensions()
    r = C.nrows()
    k = B.ncols()

    assert C.dimensions() == (r, n) and C.rank() == r
    assert B.dimensions() == (s, k)
    assert D.dimensions() == (r, k)

    # Construct the indexing set II.
    II = []

    for I in combinations(range(s), n - r):
        M_I = C.stack(A.matrix_from_rows(I))
        if M_I.det() != 0:
            II.append(I)

    # Construct the set DD of all denominators
    DD = set()

    # For every I in II, add the denominators contributed by I to DD.
    for I in II:
        M_I = C.stack(A.matrix_from_rows(I))
        R_I = D.stack(B.matrix_from_rows(I))
        X_I = M_I.change_ring(QQ).solve_right(R_I)

        # An integral X_I contributes only denominator 1.
        if X_I.denominator() == 1:
            d_I = ZZ(1)
        else:
            # Construct K_I = {t >= 0 : (B - A*X_I)*t >= 0}.
            inequalities = identity_matrix(QQ, k).stack(B - A*X_I)
            K_I = Polyhedron(
                ieqs=[[0] + list(row) for row in inequalities.rows()],
                ambient_dim=k,
                base_ring=QQ,
            )

            # (a) Find N_I such that ker(N_I) = span_R(K_I).
            KI_equations = K_I.equations()

            if len(KI_equations) == 0:
                # K_I spans R^k, so L_I = Z^k.
                Z_I = identity_matrix(ZZ, k)

                # (c) Compute d_I = den(X_I * Z_I).
                d_I = (X_I * Z_I).denominator()

            else:
                # A() returns the coefficients of the variables in a linear equation.
                N_I = matrix(QQ, [eq.A() for eq in KI_equations])

                # (b) Compute the lattice L_I = ker(N_I) intersect Z^k.
                L_I = N_I.integer_kernel()

                if L_I.rank() == 0:
                    # If L_I = {0}, then set d_I = 1.
                    d_I = ZZ(1)
                else:
                    # Write a Z-basis of L_I as the columns of Z_I.
                    Z_I = L_I.basis_matrix().transpose()

                    # (c) Compute d_I = den(X_I * Z_I).
                    d_I = (X_I * Z_I).denominator()

        DD = DD.union(d_I.divisors())

    return sorted(DD)


# Type-G2 input matrices. The rows displayed here are the rows of -A.
A = -matrix(ZZ, [
    [ 1,  0,  0,  0,  0,  0],
    [ 0,  3, -1,  0,  0,  0],
    [ 0,  0,  2, -3,  0,  0],
    [ 0,  0,  0,  3, -2,  0],
    [ 0,  0,  0,  0,  1, -3],
    [ 0,  0,  0,  0,  0,  1],
    [-1,  0,  0,  0,  0,  0],
    [ 1, -1,  0,  0,  0,  0],
    [ 0,  2, -1,  0,  0,  0],
    [ 0,  0,  1, -2,  0,  0],
    [ 0,  0,  0,  1, -1,  0],
    [ 0,  0,  0,  0,  0, -1],
    [-1,  3, -2,  3, -2,  3],
    [ 0, -1,  1, -2,  1, -2],
    [ 0,  0, -1,  3, -2,  3],
    [ 0,  0,  0, -1,  1, -2],
    [ 0,  0,  0,  0, -1,  3],
    [ 0,  0,  0,  0,  0, -1]
])

C = matrix(ZZ, [
    [1, 0, 1, 0, 1, 0],
    [0, 1, 0, 1, 0, 1]
])

# t = (lambda_1, lambda_2, mu_1, mu_2, nu_1, nu_2).
D = matrix(ZZ, [
    [2, 3, -2, -3, 2, 3],
    [1, 2, -1, -2, 1, 2]
])

B = matrix(ZZ, 18, 6)  # Initially all entries are zero.
B[6, 0] = 1             # Seventh RHS is lambda_1.
for i in range(7, 12):
    B[i, 1] = 1         # Eighth through twelfth RHSs are lambda_2.
for i in (12, 14, 16):
    B[i, 4] = 1         # Alternating RHSs are nu_1.
for i in (13, 15, 17):
    B[i, 5] = 1         # Remaining RHSs are nu_2.

DD = possible_denominators(A, B, C, D)
print("Possible vertex denominators:", DD)
print("LCM of possible vertex denominators:", lcm(DD))
\end{lstlisting}

\printbibliography

\end{document}